\documentclass[11pt]{amsart}

\usepackage{palatino}
\usepackage{amsfonts}
\usepackage{amssymb}
\usepackage{amscd}
\usepackage{xypic}
\usepackage[breaklinks,bookmarksopen,bookmarksnumbered]{hyperref}
\usepackage{graphicx}
\usepackage{float}

\newcommand{\dblq}{{/\!/}}
\newtheorem{theorem}{Theorem}[section]
\newtheorem{proposition}[theorem]{Proposition}
\newtheorem{corollary}[theorem]{Corollary}

\theoremstyle{definition}
\newtheorem{definition}[theorem]{Definition}
\newtheorem{remark}[theorem]{Remark}

\newtheorem{conjecture/question}[theorem]{Conjecture/Question}

\newtheorem{remark/definition}[theorem]{Remark/Definition}
\newtheorem{terminology/notation}[theorem]{Terminology/Notation}

\def\GG{{\textbf G}}

\def\PP{{\textbf P}}

\def\VV{{\textbf V}}
\def\OO{\mathcal{O}}

\def\F{\mathcal{F}}
\def\P{\mathcal{P}}

\def\cS{\mathcal{S}}

\def\L{\mathcal{L}}
\def\C{\mathcal{C}}
\def\I{\mathcal{I}}

\def\cM{\mathcal{M}}
\def\cR{\mathcal{R}}

\def\rr{\overline{\mathcal{R}}}

\def\cU{\mathcal{U}}
\def\cC{\mathcal{C}}
\def\H{\mathcal{H}}

\def\mm{\overline{\mathcal{M}}}
\def\SS{\overline{\mathcal{S}}}
\def\hs{\mathcal{H}\mathcal{S}}

\def\th1{\overline{\Theta}_{g,1}^-}

\DeclareFontFamily{OT1}{pzc}{}
\DeclareFontShape{OT1}{pzc}{m}{it}{<-> s * [1.10] pzcmi7t}{}
\DeclareMathAlphabet{\mathpzc}{OT1}{pzc}{m}{it}

\begin{document}
\title{The unirationality of $\SS_9^{-}$ and moduli spaces of pointed spin curves}

\author[G. Farkas]{Gavril Farkas}

\address{Humboldt-Universit\"at zu Berlin, Institut f\"ur Mathematik,  Unter den Linden 6
\hfill \newline\texttt{}
 \indent 10099 Berlin, Germany} \email{{\tt farkas@math.hu-berlin.de}}

\author[A. Verra]{Alessandro Verra}
\address{Universit\`a Roma Tre, Dipartimento di Matematica e Fisica, Largo San Leonardo Murialdo \hfill
 \newline \indent 1-00146 Roma, Italy}
 \email{{\tt alessandro.verra@mat.uniroma3.it}}

\begin{abstract}
We show that the moduli space $\SS_9^-$of odd spin curves of genus $9$ is unirational. This is the highest genus for which such a result is known. This is achieved by realizing birationally the moduli space $\SS_g^-$ when $g\leq 9$ as a locally trivial projective bundle over a certain (finite quotient of the) moduli space $\SS_{g',n}^-$ of $n$-pointed odd stable spin curves of genus $g'<g$. We then present general results on the Kodaira dimension of both moduli spaces $\SS_{g,n}^-$ and $\SS_{g,n}^+$.  
\end{abstract}

\maketitle

\section{introduction}

The moduli space $\cS_g$ of spin curves of genus $g$ parametrizes pairs $[C, \vartheta]$ consisting of a smooth projective curve $C$ of genus $g$ and a theta-characteristic $\vartheta$, that is, a line bundle $\vartheta \in \mbox{Pic}^{g-1}(C)$ such  that $\vartheta^{\otimes 2}\cong \omega_C$. Using classical results \cite{Mu}, it is well-known that $\cS_g$ has two connected components $\cS_g^-$ and $\cS_g^{+}$ depending on the parity of  $h^0(C, \vartheta)$. Both components admit Deligne-Mumford compactifications $\SS_g^-$ and $\SS_g^+$ by means of stable spin curves which were constructed by Cornalba \cite{Cor}. The spin moduli spaces (as well as their pointed counterparts $\SS_{g,n}^-$ and $\SS_{g,n}^+$) have been intensely studied either from a geometric \cite{Lu}, tropical \cite{CMP}, or string-theoretic point of view \cite{DW}.  One has a complete classification by Kodaira dimension of both $\SS_g^-$ and $\SS_g^+$ for all genera, see \cite{F1}, \cite{FV1}, \cite{FV2}. This situation contrasts the case of the moduli space $\mm_g$, where in spite of the results in \cite{EH}, \cite{HM}, \cite{FJP}, the Kodaira dimension of $\mm_g$ when $17\leq g\leq 21$ remains unknown. 

\vskip 3pt

It is known that the odd spin moduli space $\SS_g^-$ is of general type for $g\geq 12$, uniruled for $g\leq 11$, and unirational for $g\leq 8$, see \cite{FV2}. The main result of this paper is the following:

\begin{theorem}\label{thm:main}
The odd spin moduli space $\SS_9^-$ is unirational.
\end{theorem}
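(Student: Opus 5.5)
The plan follows the strategy announced in the abstract. We realize $\SS_9^-$ birationally as a projective bundle over a finite quotient of a moduli space of pointed odd spin curves of lower genus, and then prove that this lower-genus space is unirational.

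\emph{Reduction to curves on a nodal model.} Start with a general odd spin curve $[C,\vartheta]\in\cS_9^-$. Then $h^0(C,\vartheta)=1$, so $\vartheta=\OO_C(D)$ with $D$ an effective divisor of degree $8$ and $2D\in|\omega_C|$. Consider a linear system on $C$ in which $D$ is contracted, or more generally one that produces a plane or surface model of $C$ with controlled singularities. A natural choice is a model of $C$ as a curve with $\delta$ nodes on a fixed rational surface (for example plane curves of degree $d$ with $\binom{d-1}{2}-9$ nodes), on which the theta-divisor is cut out by a conic or line that is everywhere tangent to the image. The moduli of such data is then governed by the position of the nodes. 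The cleaner version is to choose a genus $g'<9$ curve $C'$ together with $n$ marked points $x_1,y_1,\dots,x_m,y_m$ and an odd theta-characteristic $\eta$ on $C'$. Gluing, or rather taking curves in $|L|$ on a surface $S\supset C'$ passing through prescribed points, gives $C$ with $\vartheta$ induced from $\eta(\sum\ldots)$. Concretely, we exhibit a rational map $\SS_{g',n}^-\dashrightarrow$ (some finite quotient) such that the fibre of $\SS_9^-\dashrightarrow \SS_{g',n}^-/G$ over a general point is an open subset of a linear system $\PP H^0(\ldots)$. This gives the locally trivial projective bundle structure, which is the birational statement proved later in the paper for $g\le 9$. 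Since a projective bundle over a unirational base is unirational (locally trivial in the Zariski topology, or at least having a rational section after base change), it suffices to prove that the base is unirational.

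\emph{Unirationality of the base.} The numerology should force $g'$ small, plausibly $g'\le 4$ or so, with $n$ moderate. The base $\SS_{g',n}^-$ then has to be shown unirational. For small $g'$ one parametrizes odd spin curves explicitly. In genus $3$, an odd theta characteristic is a bitangent of a plane quartic, and the pairs (quartic, bitangent line) form a rational family. In genus $4$, it is a tritangent plane of the canonical curve on a quadric. In either case marking $n$ extra points is harmless: they range over a rational variety fibred over the curve family, and unirationality is preserved by taking points on the universal curve, as long as $n$ stays within the range where pointed curves on the fixed model still dominate. The finite quotient by $G$ (permuting marked points) causes no problem for unirationality, since a quotient of a unirational variety is unirational.

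\emph{Main obstacle.} The hard step is dominance plus the dimension count. We must verify that a general $[C,\vartheta]\in\cS_9^-$ actually arises from the construction, and that the fibres have the expected dimension so that the map onto $\SS_9^-$ is dominant, with $\dim \SS_9^-=24$. I would first construct one explicit pair, e.g. a nodal curve on a suitable rational or K3-type surface. I would then check at that point the injectivity of the differential, or equivalently the vanishing $h^1$ of the relevant normal bundle twisted by the nodes/tangency conditions, possibly by a Macaulay2 computation. Semicontinuity then yields dominance.
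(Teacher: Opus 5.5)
\textbf{There is a genuine gap.} Your outline has the right global shape, but the gap sits exactly where the paper's new work is.

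First, the reduction never fixes a construction. The paper uses Mukai's theorem that a general genus $9$ curve is a linear section $C=\mathfrak{V}\cap H_1\cap\cdots\cap H_5$ of $\mathfrak{V}=\mathrm{Sp}(3,6)\subseteq\PP^{13}$. It encodes $\vartheta=\OO_C(p_1+\cdots+p_8)$ by the length-$16$ cluster $Z\subseteq C$, and degenerates to $\delta$-nodal sections $\Gamma\supseteq Z$ with nodes at $p_1,\ldots,p_\delta$. This yields the correspondence $\mathcal{P}_{9,\delta}$:
\begin{itemize}
\item $\bar\beta$ is birationally a projective bundle over $\SS^-_{9-\delta,2\delta}/\mathbb{Z}_2^{\oplus\delta}$;
\item $\bar\alpha\colon\mathcal{P}_{9,\delta}\to\SS_9^-$ is dominant if and only if $\delta\le\dim\mathfrak{V}-1=5$.
\end{itemize}
So it is a correspondence dominating $\SS_9^-$, not a map out of $\SS_9^-$. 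The dominance bound is what forces $(g',n)=(4,10)$. Your genus-$3$ option would need $\delta=6$ in this construction, where $\bar\alpha$ is no longer dominant. Without an explicit construction, the dominance check you propose has nothing to test.

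Second, and more seriously, your claim that marking points is harmless is false. The unirationality of $\SS^-_{4,10}$ is precisely Theorem \ref{thm:kodaira4}, the main new result of the paper. Unirationality is not inherited by adding points on the universal curve: over a family of genus-$4$ curves the fibres are $C^n$, which are not rational, and $\SS^-_{4,n}$ is even of general type for $n\ge 12$ (Theorem \ref{thm:kodaira_pointed}). The useful fibration is the other one, over configurations $\bar p$ of points in a fixed surface, whose fibres are the curves of the model through $\bar p$.

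In the plane quintic model, the bitangency locus is the cone $\cC=\rho^*(\VV)$ over the projected Veronese surface. With $9$ general points the fibre is still isomorphic to $\VV$, so that case is easy. But $10$ points cut $\VV$ by a hyperplane. Then $\cR\to\cU\subseteq(\PP^2)^{10}$ is a fibration in rational quartic curves $R_{\bar p}$, which are images of conics in $L^{[2]}\cong\PP^2$, with no evident rational section. Unirationality of such a total space does not come for free.

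The missing idea is the paper's bisection argument:
\begin{itemize}
\item a fixed conic $Q\subseteq\VV$ meets each $R_{\bar p}$ in two points;
\item the map $\bar p\mapsto R_{\bar p}\cdot Q$ is birationally a $\GG(1,10)$-bundle over $Q^{[2]}$, so the double cover $\widetilde{\cU}=\cU\times_{Q^{[2]}}(Q\times Q)$ is rational;
\item after base change the fibration acquires a section, so $\widetilde{\cR}$ is rational and dominates $\SS^-_{4,10}$.
\end{itemize}
Nothing in your proposal replaces this step.
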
 

Note that this is the highest genus $g$ for which $\SS_g^-$ is known to be unirational. It has recently established that the Prym moduli space $\rr_9$ is uniruled \cite{FV3}. The even spin moduli space $\SS_9^+$ is of general type \cite{F2}; this is the lowest $g$ for which $\SS_g^+$ is of general type.  Therefore genus $9$ can be regarded as the transition case for both the Prym and spin moduli spaces.

\vskip 4pt

The proof of Theorem \ref{thm:main}  uses, on the one hand Mukai's celebrated structure theorem \cite{M2} for curves of genus $9$, on the other hand a fundamental construction in \cite{FV2} applying to all odd spin moduli spaces $\SS_g^-$ for $g\leq 9$. Precisely, denoting by 
$$\mathfrak{V}:=\mbox{Sp}(3,6)\subseteq \PP^{13}$$
the $6$-dimensional symplectic Grassmannian, since $K_{\mathfrak{V}}=\OO_{\mathfrak{V}}(-4)$, linear $1$-dimensional sections of $\mathfrak{V}$ are canonical curves of genus $9$. It is the main result of \cite{M2} that \emph{every} non-pentagonal curve $[C]\in \cM_9$ appears in this way, that is, $C=\mathfrak{V} \cap H_1\cap \ldots \cap H_5\subseteq \PP^8$, where $H_i\in |\OO_{\PP^{13}}(1)|$ are hyperplanes. Following \cite{FV2} we denote by $\Xi \subseteq \mbox{Hilb}_{16}(\mathfrak{V})$ the variety of length $16$ \emph{clusters}, consisting of $0$-dimensional curvilinear subschemes $Z\in \mbox{Hilb}_{16}(\mathfrak{V})$ with $\mbox{supp}(Z)=\{p_1, \ldots, p_8\}$ such that $\mbox{mult}_{p_i}(Z)=2$, for $i=1, \ldots, 8$. Introducing the  variety
$$\cU:=\Bigl\{(C, Z): C=H_1\cap \cdots \cap H_5\cap \mathfrak{V} \mbox{ is a smooth curve section},\  \ Z\subseteq C\Bigr\}, $$
and retaining the previous notation, 
one has an $\mbox{Aut}(\mathfrak{V})$-equivariant map $\cU\dashrightarrow \SS_9^-$ given by $\bigl(C,Z)\mapsto [C, \OO_C(p_1+\cdots+p_8)]$. It is shown in \cite[Proposition 3.7]{FV2} that this construction induces a birational isomorphism 
\begin{equation}\label{eq:Mukaimodel_9}
\mathfrak{s}_9:=\cU \dblq \mbox{Aut}(\mathfrak{V})\stackrel{\cong}\dashrightarrow \SS_9^{-}. 
\end{equation}
We regard $\mathfrak{s}_9$ as being the \emph{Mukai model} of $\SS_9^-$. For every $1\leq \delta \leq g$, we denote by $\cU_{\delta}$ the subvariety of $\cU$ consisting of pairs $(\Gamma, Z)$ such that $\Gamma$ is a $\delta$-nodal curve with $\mbox{sing}(\Gamma)\subseteq \mbox{supp}(Z)$. We write $\mbox{supp}(Z)=\{p_1, \ldots, p_8\}$ and we may assume that $\{p_1, \ldots, p_{\delta}\}=\mbox{sing}(\Gamma)$. If $\nu\colon N\rightarrow \Gamma$ is the normalization of $\Gamma$ and $\{x_i, y_i\}=\nu^{-1}(p_i)$ for $i=1, \ldots, \delta$, then 
$$\bigl[N, x_1+y_1, \ldots, x_{\delta}+y_{\delta}, \ \OO_N(p_{\delta+1}+\cdots+p_8)\bigr]$$ can be regarded as an odd $2\delta$-pointed spin curve of genus $9-\delta$. Letting $B_{g, \delta}^-:=\SS_{g-\delta,2\delta}^-/\mathbb Z_2^{\oplus \delta}$ to be the moduli space of odd spin curves of genus $g-\delta$ with $\delta $ pairs of points, one has a birational isomorphism 
\begin{equation}\label{eq:nodalMukaimodel_9}
\mathfrak{s}_{9, \delta}:=\cU_{\delta}\dblq \mbox{Aut}(\mathfrak{V})\stackrel{\cong}\dashrightarrow B_{9,\delta}^-.
\end{equation}
One can combine the isomorphisms (\ref{eq:Mukaimodel_9}) and (\ref{eq:nodalMukaimodel_9}) into the following incidence correspondence 
\begin{equation}\label{eq:incidence2}
\xymatrix{
  & \mathcal{P}_{9, \delta}:=\Bigl\{(C,\Gamma, Z)\in \cU \times_{\Xi} \cU_{\delta}: Z\subseteq C\cap \Gamma\Bigr\}\dblq \mbox{Aut}(\mathfrak{V})
   \ar[dl]_{\bar{\alpha}} \ar[dr]^{\bar{\beta}} & \\
   \SS_9^{-} & & B_{9,\delta}^-       \\
                 }
\end{equation}                 
where the maps $\bar{\alpha}$ and $\bar{\beta}$ are induced by the birational isomorphisms (\ref{eq:Mukaimodel_9}) and (\ref{eq:nodalMukaimodel_9}) respectively. Note that $\bar{\beta}$ is birational to a Zariski locally trivial projective bundle, whereas $\bar{\alpha}$ is surjective if and only if $\delta\leq \mbox{dim}(\mathfrak{V})-1$, see \cite[Proposition 3.13 ]{FV2}. This construction has been used in \cite[Theorem 0.2]{FV2} to show that $\SS_g^{-}$ is unirational for $g\leq 8$ by taking $\delta=g-1$, in which case $B_{g,g-1}^-$ is a moduli space of pointed elliptic curves. Due to the small dimension of the Mukai variety $\mathfrak{V}$, this strategy does not work for the moduli space $\SS_9^-$, therefore this case remained open. It is one of the main tasks of this paper to complete this program in genus $9$. We take $\delta=\mbox{dim}(\mathfrak{V})-1$ and then Theorem \ref{thm:main} will follow from the following result:

\begin{theorem}\label{thm:kodaira4}
The moduli space $\SS_{4,n}^{-}$ of $n$-pointed odd spin curves of genus $4$ is unirational for $n\leq 10$.
\end{theorem}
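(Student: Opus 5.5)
It suffices to treat $n=10$: the forgetful maps $\SS_{4,10}^-\to \SS_{4,n}^-$ are dominant, so unirationality descends. The plan is to build an explicit unirational parametrization from canonical models. A general genus $4$ curve $C$ has canonical model $C=Q\cap K\subseteq \PP^3$, with $Q$ a smooth quadric and $K$ a cubic surface. An odd theta-characteristic on a general genus $4$ curve has $h^0=1$, so $\vartheta=\OO_C(D)$ with $D=q_1+q_2+q_3$ effective and $2D\in|K_C|$. Equivalently, there is a plane $H$ with $H\cdot C=2(q_1+q_2+q_3)$, i.e. a tritangent plane.

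The parametrization runs in reverse. Fix the smooth quadric $Q\cong \PP^1\times\PP^1$ and a plane $H$; the conic $E=H\cap Q$ is smooth. Choose three points $q_1,q_2,q_3\in E$ and ten further points $x_1,\dots,x_{10}\in Q$. All these data form a rational variety. We then require the cubic $K$ to satisfy the following conditions:
\begin{enumerate}
\item $K\cap Q$ is tangent to $E$ at each $q_i$, i.e. $K|_E=2(q_1+q_2+q_3)$ as a divisor of degree $6$ on $E\cong\PP^1$;
\item $K$ passes through $x_1,\dots,x_{10}$.
\end{enumerate}
The curves $C\in|\OO_Q(3,3)|$ form a $\PP^{15}$. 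Condition (1) consists of linear conditions: the restriction $H^0(\OO_Q(3,3))\to H^0(\OO_E(6))\cong k^7$ is surjective, and prescribing the restriction up to scalar imposes $6$ conditions. The points impose at most $10$ more, so a priori $15-6-10=-1$ and the system is generically empty. The fix is to let the points vary and fiber the incidence over the curves. For a fixed tritangent configuration, the curves with $K|_E\in\langle 2(q_1+q_2+q_3)\rangle$ form a linear system $\Lambda\cong\PP^9$. The correspondence $\{(C,x_1,\dots,x_{10}):C\in\Lambda,\ x_i\in C\}$ maps to $\Lambda$ with fibres $C^{10}$. So we need a different rational fibration.

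We fiber over the points instead. Given general $x_1,\dots,x_9\in Q$, the curves in $\Lambda$ through them form a unique $C$, since $\Lambda$ has dimension $9$ and the points impose independent conditions. This makes $\SS_{4,9}^-$ (up to the torsion of choosing $H$) dominated by the rational variety of data $(H,q_1,q_2,q_3,x_1,\dots,x_9)$ modulo $\mathrm{Aut}(Q)$. A dimension count confirms this: $3+3+18=24$, and subtracting $\dim \mathrm{Aut}(Q)=6$ gives $18=\dim\SS_{4,9}^-=9+9$. For the tenth point we pass to a pencil: curves in $\Lambda$ through $x_1,\dots,x_8$ form a pencil $\PP^1$. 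The tenth and ninth points are then parametrized by the universal curve over this pencil. That universal curve is the blow-up of $Q$ at the base points, hence rational. So the data $(H,q_i,x_1,\dots,x_8,y,z)$ with $y,z\in Q$ rational, plus the choice of pencil member, dominate $\SS_{4,10}^-$. Concretely, the map sends the data to the curve in $\Lambda$ through $x_1,\dots,x_8,y$, with marked points $x_1,\dots,x_8,y,z$. The point $z$ must lie on that curve, and the pair (curve, $z$) ranges over a rational surface, namely the blow-up of $Q$ at the $16$ base points.

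The main obstacle is verifying dominance and the genericity statements. One needs $\Lambda$ to be a genuine $\PP^9$ whose general member is smooth, so that its canonical theta-characteristic is $\OO_C(q_1+q_2+q_3)$ and is odd with $h^0=1$. One also needs the eight points to impose independent conditions, with base locus exactly $18-2=16$ points. I would check these on one explicit example, e.g. $C=E'\cup$(a union of rulings) deformed, or a computer check, and use a parameter count for generic injectivity up to $\mathrm{Aut}(Q)$, finitely many tritangent planes.
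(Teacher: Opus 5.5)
Your set-up works up to nine points. With $E=H\cap Q$ and $q_1,q_2,q_3\in E$ fixed, $\Lambda\cong\PP^9$ is a linear system whose general member is a smooth $(3,3)$-curve on which $\OO_C(q_1+q_2+q_3)$ is an odd theta-characteristic. The space $\{(C,x_1,\ldots,x_k): C\in\Lambda,\ x_i\in C\}$ is birationally a $\PP^{9-k}$-bundle over $Q^k$ for $k\leq 9$, which gives unirationality of $\SS_{4,n}^-$ for $n\leq 9$.

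The argument breaks at the tenth point. After fixing $x_1,\ldots,x_8$ you must parametrize a member $C_t$ of the pencil together with \emph{two} points $y,z\in C_t$. That is the threefold $\widetilde{Q}\times_{\PP^1}\widetilde{Q}$, where $\widetilde{Q}\to\PP^1$ is the blown-up quadric. This is not the rational surface $\widetilde{Q}$, which accounts for only one point on a pencil member. Nor is it $Q\times Q$: once $y$ is chosen, $C_y$ is determined and $z$ ranges over a genus $4$ curve. Over $\PP^1$ its general fibre is $C_t\times C_t$, a surface of general type, and over $\widetilde{Q}$ it is a genus $4$ fibration. You give no reason why it should be unirational. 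Equivalently, you need the tenfold fibre power of the universal curve over $\Lambda$ to be unirational; this is birational to the divisor in $Q^{10}$ of $10$-tuples lying on a common member of $\Lambda$, and that is the entire difficulty. The step cannot be right as stated: the same reasoning (fix eight points, put all further points on one pencil member) would make $\SS_{4,n}^-$ unirational for every $n$. But it is of general type for $n\geq 12$ by Theorem \ref{thm:kodaira_pointed}.

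The missing idea is to trade linearity for extra free parameters. The paper projects from one point $p\in\mathrm{supp}(\vartheta)$ to a $2$-nodal plane quintic model. It fixes only the line $L$ and the point $n=\phi(p)$, and lets the remaining two points of $\mathrm{supp}(\vartheta)$ vary. In your quadric picture this means fixing $E$ and only \emph{one} tangency point.

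The curves with the required bitangency then form the cone over the projected Veronese surface $\VV\subseteq L^{[4]}$, an $11$-dimensional non-linear family. Ten general points cut it down to a rational quartic curve $R_{\overline{p}}$. This is a conic over the function field of $(\PP^2)^{10}$, rather than a genus $4$ curve.

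What remains is to find a rational point on this conic. The paper does this by base changing along the bisection cut out by a fixed conic in $\VV$. The resulting parameter space is a $\GG(1,10)$-bundle over the square of that conic, hence rational, and the base-changed fibration acquires a section.
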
 

One has a finite surjective map $\SS_{4,10}^-\twoheadrightarrow B_{9,5}^{-}$. Therefore,  from the correspondence (\ref{eq:incidence2}) applied in the case $\delta=5=\mbox{dim}(\mathfrak{V})-1$, we obtain via Theorem \ref{thm:kodaira4}  that $\mathcal{P}_{9,5}$ is unirational. Since $\bar{\alpha}$ is a dominant morphism, we conclude that the spin moduli space $\SS_9^-$ is unirational. It is an interesting open question in the spirit of \cite{CL} whether the Chow wing $CH^*(\SS_g^-)$ is tautological for $g\leq 9$.

\vskip 4pt

We now discuss the main ideas in the proof of Theorem \ref{thm:kodaira4}. We start with a general triple $[C,\vartheta, p]$, where $[C, \vartheta]\in \cS_4^-$ and $p\in \mathrm{supp}(\vartheta)$. Observe that such a triple induces a plane quintic model $\Gamma\subseteq \PP^2$ with a fixed bitangent line. Precisely, the canonical curve $C\subseteq \PP^3$ lies on a unique quadric $Q\subseteq \PP^3$ and we denote by $\ell'$ and $\ell''$ the two rulings of $Q$ passing through $p\in C$, therefore ${\bf T_p}(Q)=\langle \ell', \ell''\rangle\subseteq \PP^3$ is the tangent plane to the quadric $Q$. Then if 
$$\phi=\phi_{|\omega_C(-p)|} \colon C \longrightarrow \PP^2=\PP H^0\bigl(\omega_C(-p)\bigr)^{\vee}$$
is the corresponding projection, we set $n:=\phi(p)$ and denote by $n':=\phi(\ell')$ and $n'':=\phi(\ell'')$ the two nodes of the projected canonical curve $\Gamma:=\phi(C)$. We set $F:=\langle n', n''\rangle\subseteq \PP^2$. Writing 
$$\mbox{supp}(\vartheta)=p+t_1+t_2,$$
we consider a second line $L=\langle \phi(t_1), \phi(t_2)\rangle\subseteq \PP^2$. The lines $F$ and $L$ meet in the point $n$ and the curve $\Gamma$ is tangent to $L$ at both points $\phi(t_1)$ and $\phi(t_2)$. Observe that the union of the two lines $L+F$ can be viewed as a (degenerate) totally tangent conic to the quintic curve $\Gamma$. 

\vskip 4pt

We now reverse this construction and start with two fixed lines $F$ and $L$ in $\PP^2$. We denote by $\{n\}=F\cap L$ and fix two further general points $n', n'' \in F$. We let
\begin{equation}\label{eq:P13}
\PP=\PP^{13}=\bigl|\OO_{\PP^2}(5)(-n-2n'-2n'')\bigr|\subseteq \bigl|\OO_{\PP^2}(5)\bigr|
\end{equation}
be the linear system of quintics nodal at $n'$ and $n''$ and passing through $n$. One has a map 
\begin{equation}\label{eq:rho}
\rho\colon \PP\dashrightarrow L^{[4]}\cong \PP^4
\end{equation}
given by assigning to each quintic curve $\Gamma\in \PP$ the cycle $\Gamma\cdot L\setminus \{n\}$. Note that we identify the $a$-th symmetric product $L^{[a]}$ of $L$ with the projective space $\PP^a$.

\vskip 4pt

The key observation is, that under this identification, the image of the \emph{squaring map}
$$L^{[2]}\longrightarrow L^{[4]}, \ \ x+y\mapsto 2(x+y),$$
can be identified with the \emph{projected Veronese surface} $\VV \subseteq \PP^4$. In this way the cone $\rho^*(\VV)\subseteq \PP$ over $\VV$ can be regarded as the family of plane quintics passing through $n$, nodal at $n'$ and $n''$, which are bitangent to the line $L$ at two unspecified points.

\vskip 4pt

In order to parametrize $\SS_{4,10}^-$, we start with a general point $\bar{p}=(p_1, \ldots, p_{10})\in (\PP^2)^{10}$ and let $\PP^3_{\overline{p}}=\bigl|\OO_{\PP^2}(5)(-n-2n'-2n''-p_1-\cdots-p_{10})\bigr|$ be the $3$-dimensional space of quintic curves  $\Gamma\in \PP$ passing through the points in $\overline{p}$. We then have a rational quartic curve 
\begin{equation}\label{eq:re}
R_{\overline{p}}:=\PP^3_{\overline{p}}\cdot \rho^*(\VV)\subseteq \PP^3_{\overline{p}}
\end{equation}
parametrizing those quintics in $\PP^3_{\overline{p}}$ which are bitangent to $L$. One has two fibrations 
\begin{equation}\label{eq:quartic_fibr}
\xymatrix{
  & \cR:=\Bigl\{(\overline{p}, \Gamma): \overline{p}=(p_1, \ldots, p_{10})\in (\PP^2)^{10}, \ \ \ \Gamma\in R_{\overline{p}} \Bigr\}
   \ar[dl]_{m} \ar[dr]^{\mathfrak{q}} & \\
   \SS_{4,10}^{-} & & \bigl(\PP^2\bigr)^{10}       \\
                 }
\end{equation}
where $\mathfrak{q}^{-1}(\overline{p})=R_{\overline{p}}$, whereas $m$ is the moduli map assigning to $\bigl(\overline{p}, \Gamma\bigr)$ the pointed spin curve 
$$\bigl[C=\phi^{-1}(\Gamma), \vartheta=\OO_C(p+t_1+t_2), \ p_1, \ldots, p_{10}\bigr]\in \SS_{4,10}^-,$$ 
where $\phi\colon C\rightarrow \Gamma$ denotes the normalization map, $p=\phi^{-1}(n)$ and $L\cdot \Gamma=n+2\bigl(\phi(t_1)+\phi(t_2)\bigr)$. Observe that  $\mathfrak{q}$ is a birational to a fibration in rational (quartic curves), whereas $m$ is dominant. Now Theorem \ref{thm:kodaira4} follows by constructing a \emph{rational bisection} of $\mathfrak{q}$ which is a rational variety. Via a standard base change argument, this immediately implies that $\cR$, and therefore also $\SS_{4,10}^-$, are unirational, which finishes the proof.

\vskip 5pt

\noindent {\bf{The Kodaira dimension of $\SS_{g,n}$.}} Theorem \ref{thm:kodaira4} presented above invites the more general question which moduli spaces $\SS_{g,n}^{\pm}$ of $n$-pointed spin curves of genus $g$ are of general type. For similar results in the much more studied case of $\mm_{g,n}$, we refer to \cite{Log}, \cite{F1} and \cite{AB}. Note also the progress \cite{B}, \cite{Bu}, \cite{CCM} on determining the Kodaira dimension of the strata of abelian differentials,  which can be thought of as  generalizations of the spaces $\SS_{g,n}^-$. Introducing the quotient $\SS_{g,[n]}:=\SS_{g,n}/\mathfrak{S}_n$, we have an isomorphism 
$$B_0\cong \SS_{g-1,[2]},$$
where $B_0$ is the ramification divisor of the map $\SS_g\rightarrow \mm_g$. We prove the following result:

\begin{theorem}\label{thm:kodaira_pointed}
1) The even pointed spin moduli space $\SS_{g,n}^+$ is of general type for $n\geq f(g)$:
\begin{table}[htp!]
%\begin{center}
\begin{tabular}{|c|c|c|c|c|c|c|c|c|}
\hline
$g$ & $4$ & $5$ & $6$ & $7$ & $8$ & $\geq 9$ \\
\hline
$f(g)$ & $9$ & $7$ & $7$ & $4$ & $1$  & $0$\\
\hline
\end{tabular}
%\end{center}
\end{table}

\noindent Furthermore, $\SS_{7,3}^+$ has non-negative Kodaira dimension.

\noindent 2) The odd spin pointed moduli space $\SS_{g,n}^-$ is of general type for $n\geq h(g)$:
\begin{table}[htp!]
%\begin{center}
\begin{tabular}{|c|c|c|c|c|c|c|c|c|}
\hline
$g$ & $4$ & $5$ & $6$ & $7$ &  $8$  & $9$ & $10$  \\
\hline
$h(g)$ & $12$ & $11$ & $10$ & $7$ & $5$ & $4$ & $2$ \\
\hline
\end{tabular}
\end{table}

\noindent Furthermore, both spaces $\SS_{8,4}^-$ and $\SS_{9,3}^-$ have non-negative Kodaira dimension.
\end{theorem}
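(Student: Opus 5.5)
The plan follows the standard Harris--Mumford strategy, as in \cite{Log}, \cite{F1}, \cite{FV2}. We write the canonical class of $\SS_{g,n}^{\pm}$ in terms of the pullbacks $\lambda$, $\psi_1,\ldots,\psi_n$ and boundary classes, and show that $K$ is a positive combination of an ample class, effective divisors and boundary. Let $\pi\colon \SS_{g,n}^{\pm}\to \mm_{g,n}$ be the forgetful map. By the Riemann--Hurwitz computation of \cite{F1} and \cite{FV1}, and since $\pi$ is simply ramified exactly along $B_0$,
$$K_{\SS_{g,n}^{\pm}} = 13\lambda + \sum_{i=1}^n \psi_i - 2\alpha_0 - 3\beta_0 - 2\sum_{i,S}(\alpha_{i:S}+\beta_{i:S}) - \cdots .$$
The coefficients are such that the divisors $A_{1:\emptyset}$ and $B_{1:\emptyset}$ enter with $-3$ (resp. $-2$) depending on parity. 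We also use that singularities impose no adjunction conditions, via the extension result for pluricanonical forms proved in \cite{FV1} for the unpointed case. For the pointed case the same argument applies, since the additional markings only rigidify the curves and the new non-canonical loci lie over the elliptic tail locus.

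To kill the $\psi_i$ terms and the $\lambda$ coefficient we need effective divisors with small slope relative to $\lambda$ and $\psi$. We use two sources.

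\textbf{(a) Spin-theoretic divisors.} For $\SS_g^+$ we take the theta-null divisor $\overline{\Theta}_{\mathrm{null}}$. For $\SS_g^-$ we take the divisor of curves where $\vartheta$ has a non-reduced section, computed in \cite{F1} and \cite{FV2}. We pull these back to the pointed space, and we also use pointed versions. On $\SS_{g,1}^-$ the divisor $\{[C,\vartheta,p]: p\in \mathrm{supp}(\vartheta)\}$ has class of the form $a\psi + b\lambda - \ldots$. On $\SS_{g,1}^+$ we use conditions like $h^0(\vartheta(-2p))\geq 1$, or the closure $\th1$-type divisors where $h^0(\vartheta\otimes\OO(p_i-p_j))\ne 0$. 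Summing over markings produces $\sum\psi_i$ with the right coefficients.

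\textbf{(b) Divisors pulled back from $\mm_{g,n}$.} We use the Logan-type divisors $D_{g:a_1,\ldots,a_n}$ of pointed curves with $h^0(C, a_1p_1+\cdots+a_np_n)\ge 2$, together with Brill--Noether divisors and the Koszul divisors used in \cite{F1}. These are ample-modulo-boundary combinations of $\lambda$ and $\psi_i$ once $n$ is large.

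For each $g$ in the tables we then solve a small linear program. We express $K$ as
$$\epsilon\cdot\bigl(\text{ample: } \kappa_1+\sum\psi_i\bigr) + \text{effective} + \text{boundary with nonneg. coefficients}.$$
The thresholds $f(g)$ and $h(g)$ are exactly where this becomes feasible. For $g\ge 9$, $n=0$ in the even case, and for $g\ge 12$ in the odd case, the statement is already known \cite{F2}, \cite{FV1}. Adding points only helps, since $\pi^*$ of a big $K$ plus the effective $\sum\psi_i$ pieces stays big. The borderline cases $\SS_{7,3}^+$, $\SS_{8,4}^-$ and $\SS_{9,3}^-$ are those where the combination is only effective, not big, giving $\kappa\ge 0$.

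The main obstacle is the boundary verification. For each boundary divisor ($\alpha_0,\beta_0,\alpha_{i:S},\beta_{i:S}$) the coefficient in the chosen effective combination must be at most that in $K$. This requires computing the boundary coefficients of the pointed spin divisors, in particular $\beta_0$ and the divisors $\alpha_{0:\{i,j\}}$ where two markings collide. I would do this by test curves: pencils on a fixed curve, moving a point along it, and curves in $B_0$ via the isomorphism $B_0\cong\SS_{g-1,[2]}$. The remaining $\alpha_{i:S}$ terms would be handled by induction on $g$ and $n$ via pullback formulas along boundary maps.
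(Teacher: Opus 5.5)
Your outline has the same architecture as the paper's proof. Pluricanonical forms extend by Ludwig's age analysis, since markings only raise ages. Then $K_{\SS_{g,n}^{\pm}}$ is written as an effective combination of $\mu^*\overline{\Theta}_{\mathrm{null}}$ (respectively $\mu^*\overline{Z}_g$ and $\sum_i\pi_i^*\overline{\Theta}_{g,1}^-$), Logan-type divisors from $\mm_{g,n}$ (the symmetrized $\overline{D}_{g,n}$, the Weierstrass divisor for $\SS_{8,1}^+$, $\overline{\mathfrak{D}}_{2:2:3}$ for $\SS_{7,3}^+$), a minimal-slope divisor $D=s\lambda-\delta_{\mathrm{irr}}-\cdots$ pulled back via $\sigma$, and boundary.

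\textbf{The gap.} The proposal stops exactly where the content begins. The one new input, the class of the universal odd theta divisor, is left as ``$a\psi+b\lambda-\cdots$''. The thresholds are declared to be ``where the linear program becomes feasible'' without any combination being exhibited.

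The precise coefficients matter. The paper proves
$[\overline{\Theta}_{g,1}^-]=\frac{\lambda}{4}+\frac{\psi}{2}-\frac{\alpha_0}{16}-\frac12\sum_{i\ge 1}\alpha_{i:\emptyset}$.
Because $\sigma^*\delta_{\mathrm{irr}}=\alpha_0+2\beta_0$, the term $\frac32\sigma^*(D)$ already contributes $-\frac32\alpha_0-3\beta_0$, and Logan's divisor has no $\delta_{\mathrm{irr}}$-term. Two consequences follow:
\begin{itemize}
\item the $\beta_0$-coefficient of $[\overline{\Theta}_{g,1}^-]$ must be $\le 0$ (it is in fact $0$);
\item the value $\frac1{16}$ forces the multiple $\frac{8}{n}[\overline{\Theta}_{g,n}^-]$ needed to reach $-2\alpha_0$.
\end{itemize}
This fixes the combination $\frac8n[\overline{\Theta}_{g,n}^-]+\frac{2(n-4)(n-1)}{g(g-3+2n)}[\pi^*\overline{D}_{g,n}]+\frac32[\sigma^*D]$. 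Its $\psi$-coefficient $\frac{2(n^2+2g-n-2)}{n(g-3+2n)}$ is $<1$ iff $n>4$. Its $\lambda$-coefficient is $\frac{3s+4}{2}-\frac{2(n-4)(n-1)}{g(g-3+2n)}$, and $h(g)$ is exactly where this drops below $13$. For example, at $g=4$, $s=\frac{17}{2}$, it is about $13.23$ for $n=11$ and $12.99$ for $n=12$. The even case likewise needs the explicit combination built from $8\mu^*\overline{\Theta}_{\mathrm{null}}$ and the corresponding inequality. Without these numbers the tables are asserted, not proved.

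\textbf{How the class is actually obtained.} Your test-curve plan is in the right spirit, but the hardest coefficient is reached differently:
\begin{itemize}
\item The curves $F_i, G_i$, analysed with limit linear series, give $\bar\alpha_{i:1}=0$ and $\bar\alpha_{i:\emptyset}=\frac12$.
\item Two elliptic pencils disjoint from the divisor, plus the pull-back to $\mm_{1,2}$, with $\bar\psi=\frac12$ and $\bar\lambda=\frac14$ from \cite{F3}, give $\bar\alpha_0=\frac1{16}$.
\item $\bar\beta_0$ comes not from test curves in $B_0$ but from pushing forward to $\mm_{g,1}$. There the $\delta_{\mathrm{irr}}$-coefficient $-2^{2g-6}$ of $\pi_*[\overline{\Theta}_{g,1}^-]$, known from \cite{F3}, is already exhausted by $2^{2g-2}\bar\alpha_0$, forcing $\bar\beta_0=0$.
\end{itemize}

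\textbf{Smaller errors.}
\begin{itemize}
\item In $K_{\SS_{g,n}}$ both $\alpha_{1:\emptyset}$ and $\beta_{1:\emptyset}$ have coefficient $-3$, not $-3$ and $-2$ by parity.
\item The condition $h^0(\vartheta(-2p))\ge 1$ is not divisorial on $\SS_{g,1}^+$, since it forces a theta-null.
\item The relative canonical class of $\SS_{g,n+1}\to\SS_{g,n}$ is $\omega_\pi$ plus an elliptic-tail term. That is $\psi_{n+1}$ minus the divisors where $p_{n+1}$ collides with some $p_j$, not an effective $\psi$-piece. So ``adding points only helps'' needs another justification, e.g.\ Viehweg's additivity over a base of general type.
\item The unpointed extension theorem is Ludwig's \cite{Lu}, not \cite{FV1}.
\end{itemize}
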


It is an interesting open question whether the moduli spaces $\SS_{7,3}^+$, $\SS_{8,4}^-$ and $\SS_{9,3}^-$ are indeed of Kodaira dimension zero, and if so, to find Calabi-Yau models for them. The proof of Theorem \ref{thm:kodaira_pointed} consists of two parts. First, by using Ludwig's results \cite{Lu}, we show that as long as $g\geq 4$, any pluricanonical form on the coarse moduli spaces $\SS_{g,n}^{\pm}$ extends to any resolution of singularities. Then  we show  that the canonical class $K_{\SS_{g,n}^+}$ (respectively $K_{\SS_{g,n}^-}$) is big for $n\geq f(g)$ (respectively for $n\geq h(g)$). To that end we use our extensive knowledge of the cones of effective divisors of $\mm_{g,n}$ provided by \cite{Log}, \cite{F1}, respectively of the unpointed spin moduli spaces $\SS_g^{\pm}$. An important new divisor class calculation provided in this paper is that of the \emph{universal odd spin structure}, that is, the closure $\overline{\Theta}_{g,1}$ in $\SS_{g,1}^-$ of the locus
$$\Theta_{g,1}:=\Bigl\{[C, \vartheta, p]\in \cS_{g,1}^-: H^0\bigl(C, \vartheta(-p)\bigr)\neq 0\Bigr\}$$
of points in the support of odd spin structures. The class of $\overline{\Theta}_{g,1}$ is computed in Theorem \ref{thm:univ_theta}.

Particularly interesting is the case $g=11$. It is shown in \cite{FV2} that $\SS_{11}^-$ is uniruled, this being the highest $g$ for hich $\SS_g^-$ is not of general type. We have the following result:

\begin{theorem}\label{thm:gen11}
\hfill

1) The moduli space $\SS_{11,1}^-$ is not of general type; it has Kodaira dimension at least $19$.

2) The moduli space $\SS_{11,[2]}^-$ has Kodaira dimension at least $19$.
\end{theorem}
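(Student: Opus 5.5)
The strategy is to exhibit the canonical class of $\SS_{11,1}^-$ as the sum of an effective divisor and the pullback of a class that is big on a lower-dimensional target. Concretely, $K$ should be a nonnegative combination of the pullback $\pi^*K_{\SS_{11}^-}$ under the forgetful map $\pi\colon \SS_{11,1}^-\to\SS_{11}^-$, the class $\psi$, and boundary divisors. Because of the extension result for pluricanonical forms already established via Ludwig's work for $g\geq 4$, it suffices to compute on the stack $\SS_{11,1}^-$ itself.

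\textbf{The canonical class.} By Riemann--Hurwitz for $\SS_{g,1}^-\to\mm_{g,1}$, the canonical class is
$$K_{\SS_{g,1}^-}=13\lambda-2\alpha_0-3\beta_0+\psi-2\sum_i(\alpha_i+\beta_i)-\cdots,$$
where the omitted terms come from the pointed boundary components.

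\textbf{The divisors used.} The first is the universal odd theta divisor $\overline{\Theta}_{11,1}$, whose class is computed in Theorem \ref{thm:univ_theta}. On a general fibre it restricts to the support of $\vartheta$, which has degree $g-1=10$. Its $\psi$-coefficient is therefore positive, so it is the natural divisor for absorbing the $\psi$ term. The second is the effective divisor on $\SS_{11}^-$ used in \cite{FV2} to show uniruledness, which has slope exactly $13/2$ (coming from the K3 / Mukai description). In \cite{FV2} this gives $K_{\SS_{11}^-}$ as a multiple of an effective class; that class is not big but is a pullback of a class from a $19$-dimensional space. Indeed, $\SS_{11}^-$ is fibred over a moduli space of polarized K3 surfaces of dimension $19$, which accounts for the number $19$.

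\textbf{The decomposition.} I will solve for nonnegative coefficients in
$$K_{\SS_{11,1}^-}=a\,\overline{\Theta}_{11,1}+b\,\pi^*(D)+(\text{effective boundary}),$$
where $D=\pi^*$ of the class giving Kodaira dimension $19$ downstairs. The $\lambda$, $\alpha_0$, $\beta_0$ and $\psi$ coefficients are matched first; then I check that every remaining boundary coefficient has the correct sign. The conclusion follows because $\pi^*$ of a divisor with Iitaka dimension $19$ plus an effective divisor has Iitaka dimension at least $19$. The statement that $\SS_{11,1}^-$ is not of general type comes from the uniruledness of $\SS_{11,1}^-$. That uniruledness follows because a general pointed spin curve of genus $11$ lies on a K3 surface in a way that moves, as in \cite{FV2}: the curves $C\subseteq S$ together with a marked point sweep out rational curves.

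\textbf{Part 2.} For $\SS_{11,[2]}^-$ I will run the same computation with two marked points on the symmetric quotient. The canonical class of $\SS_{g,[n]}^-$ acquires an extra $-\delta_{0:\{1,2\}}$ ramification term. I would write it as
$$\tfrac12\bigl(\overline{\Theta}_{11,1}\text{ pulled back via both points}\bigr)+\pi^*D+\text{boundary},$$
and descend the resulting decomposition from $\SS_{11,2}^-$.

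\textbf{Main obstacle.} I expect the hardest step to be verifying the boundary coefficients, especially those of $\beta_0$ and of the components $\delta_{i:\{1\}}$ against the class of $\overline{\Theta}_{11,1}$. The equality of slope $13/2$ leaves no slack, so the coefficients must match exactly. If some coefficient fails, I would first try adding the pulled-back divisors from $\mm_{11,1}$ given by Weierstrass-type or pointed Brill--Noether divisors.
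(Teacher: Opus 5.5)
\textbf{The ``not of general type'' half does not work.} You derive it from uniruledness of $\SS_{11,1}^-$. But a uniruled variety has Kodaira dimension $-\infty$, so this contradicts the bound $\kappa\geq 19$ you are proving at the same time.

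The geometric claim is also false. The hyperplanes through $\Pi=\langle \mathrm{supp}(\vartheta)\rangle\subseteq \PP^{11}$ form only a pencil, and its base locus on $S$ is exactly $\mathrm{supp}(\vartheta)$. So a marked point can be carried along only if $p\in\mathrm{supp}(\vartheta)$. These rational curves $\Gamma$ therefore sweep out only the divisor $\overline{\Theta}_{11,1}^-$, not the whole space.

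The paper uses them as follows. From $\Gamma\cdot\lambda=12$, $\Gamma\cdot\psi=2$, $\Gamma\cdot\alpha_0=64$ and $\Gamma\cdot\beta_0=10$ one gets $\Gamma\cdot K_{\SS_{11,1}^-}=156+2-128-30=0$. Theorem \ref{thm:uniruledK3} gives $\Gamma\cdot\overline{\Theta}_{11,1}^-=0$. Suppose $K=A+E$ with $A$ ample and $E$ effective. Then $\Gamma\cdot E<0$, which forces $\overline{\Theta}_{11,1}^-$ to be a component of $E$. Removing that component does not change $\Gamma\cdot E$, and what remains is an effective divisor not containing the covering family, so it meets $\Gamma$ non-negatively. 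This is a contradiction, hence $K$ is not big.

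\textbf{The lower bound rests on a false input and an impossible decomposition.} $K_{\SS_{11}^-}$ is not an effective class of Iitaka dimension $19$. In fact $\SS_{11}^-$ is uniruled: the same K3 pencils without marked point give $K_{\SS_{11}^-}\cdot\Gamma=156-128-30=-2$. The number $19$ comes from $\mm_{11}$. By \cite[Proposition 10.5]{FP} there is a $19$-dimensional family of effective divisors $D$ of slope $7$, so $\sigma^*D$ has Iitaka dimension at least $19$.

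However, $K=a\,\overline{\Theta}_{11,1}^-+b\,\sigma^*D+(\text{effective boundary})$ has no solution. Restricting to a fibre over $\SS_{11}^-$ forces $a\leq 2$. With $a=2$, the $\lambda$-coefficient needs $b\leq \frac{25}{14}$, while the $\alpha_0$-coefficient needs $b\geq\frac{15}{8}$.

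Your fallback of adding Weierstrass or pointed Brill--Noether divisors from $\mm_{11,1}$ cannot help. Such divisors carry $\alpha_0$ and $\beta_0$ in ratio $1:2$. They are also non-negative on the covering curves of $\mm_{11,1}$ given by K3 pencils with a marked base point, which have $\lambda=12$, $\delta_{\mathrm{irr}}=84$, $\psi=1$. A linear computation with these constraints shows the $\alpha_0$-deficit still needs $a\geq \frac{44}{7}$.

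The missing ingredient is a genuinely spin divisor that separates $\alpha_0$ from $\beta_0$, namely $\mu^*\overline{Z}_{11}$ with $[\overline{Z}_{11}]=19\lambda-\frac{13}{4}\alpha_0-2\beta_0-\cdots$. The paper writes
\[
K_{\SS_{11,1}^-}=2\bigl[\overline{\Theta}_{11,1}^-\bigr]+\tfrac43\bigl[\sigma^*D\bigr]+\tfrac16\bigl[\mu^*\overline{Z}_{11}\bigr]+\sum_i\bigl(a_i\alpha_i+b_i\alpha_{i:\emptyset}\bigr).
\]
The coefficients match exactly: $\frac12+\frac{28}{3}+\frac{19}{6}=13$, $\frac18+\frac43+\frac{13}{24}=2$ and $\frac83+\frac13=3$. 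Each of the three divisors meets $\Gamma$ in degree $0$, and this is exactly what links the two halves of part 1.

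For part 2, the coefficient $\frac12$ on $\pi_1^*\overline{\Theta}_{11,1}^-+\pi_2^*\overline{\Theta}_{11,1}^-$ absorbs only $\frac14\psi_i$. Even with coefficient $2$, these divisors contribute only $-2\beta_{0:\{1,2\}}$, because $\pi_i^*\psi=\psi_i-\beta_{0:\{1,2\}}$. The pulled-back canonical class has $-3\beta_{0:\{1,2\}}$, so a further divisor with a large negative $\beta_{0:\{1,2\}}$-coefficient is still needed.
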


We show that $K_{\SS_{11,1}^-}$ is a linear combination of $\bigl[\overline{\Theta}_{11,1}^-\bigr]$, the pull-back of the Brill-Noether divisor under the forgetful map $\SS_{11,1}^-\rightarrow \mm_{11}$ and the pull-back under the forgetful map $\SS_{11,1}^-\rightarrow \SS_{11}^-$ of the branch divisor of the generically finite map $\overline{\Theta}_{11,1}^-\rightarrow \SS_{11}^-$. Furthermore, we show in Theorem \ref{thm:uniruledK3} that through a general point of $\overline{\Theta}_{11,1}^-$ there passes a rational curve $\Gamma\subseteq \SS_{11,1}^-$ such that $\Gamma \cdot K_{\SS_{11,1}^-}=0$, which immediately implies (the first part of) Theorem \ref{thm:gen11}. We are tempted to conjecture that both $\SS_{11,1}^-$ and $\SS_{11,[2]}^-$ have Kodaira dimension $19$, this being the dimension of the moduli space $\F_{11}$ of polarized $K3$ surfaces of genus $11$. Note that it has been shown in \cite{FV4} that the universal Jacobian over $\mm_{11}$ has Kodaira dimension $19$, though we cannot see a direct connection to $\SS_{11,1}^-$. There are other interesting moduli spaces of curves of genus $10$ and $11$ that have submaximal Kodaira dimension \cite{BM}.  

\vskip 5pt

The last topic we mention concerns the moduli space $\SS_{1,n}^+$. It is a well known result of Deligne that $H^{11,0}(\mm_{1,11})\neq 0$, see \cite[\S 3.5]{FP}, or \cite{BF} for various presentations. This has been one of the sources for producing non-tautological cohomology classes on $\mm_{g,n}$, see \cite{CLP} and references therein. We have the following result\footnote{This contradicts \cite[Lemma 2]{BF}, which is incorrect.} on the space $\SS_{1,n}^+$:
\begin{theorem}\label{thm:genus1}
One has $H^{7,0}(\SS_{1,7}^+)\neq 0$. Furthermore, the Kodaira dimension of $\SS_{1,7}^+$ is equal to zero, whereas the Kodaira dimension of $\SS_{1,n}^+$ is equal to one for $n\geq 8$.
\end{theorem}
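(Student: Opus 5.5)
The plan is to model $\SS_{1,n}^+$ through the level-two structure of elliptic curves. An even theta-characteristic on a smooth genus one curve $E$ is a nontrivial $2$-torsion point $\eta\in E[2]\setminus\{0\}$, since $\omega_E\cong\OO_E$ and $h^0(\eta)=0$. Hence $\cS_{1,n}^+$ parametrizes $[E,\eta,p_1,\ldots,p_n]$, and after taking the origin at $p_1$ it is the moduli of $(E,\eta)$ with $n-1$ further points modulo translation. Equivalently, $\SS_{1,n}^+$ is birational to the $(n-1)$-fold fibre power $\cE_{\Gamma_1(2)}^{\times (n-1)}$ of the universal elliptic curve over the modular curve $X_1(2)=X_0(2)$, divided by the involution $-1$ and by the symmetric group action on the points.

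For the Hodge number, I would mimic Deligne's argument for $H^{11,0}(\mm_{1,11})\cong S_{12}(\mathrm{SL}_2(\mathbb Z))$. Holomorphic top forms on a smooth model of the Kuga--Sato variety $\cE_\Gamma^{\times k}$ correspond to weight $k+2$ cusp forms for $\Gamma$. On $\SS_{1,n}$ one then takes the part of this space that is alternating under $\mathfrak S_n$; this is the component that descends, once the sign twist of $-1$ and the permutation action are accounted for. In Deligne's case this gives weight $n+1=12$. For $\SS_{1,n}^+$, which is a moduli space with level structure $\Gamma_0(2)$, the top forms should be $S_{n+1}(\Gamma_0(2))$ with the parity constraint inherited from the involution $-1$. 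Note that $-1$ fixes $\eta$, so the even-weight condition persists and $n$ must be odd. For $n=7$ one needs $S_8(\Gamma_0(2))\neq0$. This holds, since $\dim S_8(\Gamma_0(2))=1$, spanned by $(\eta(\tau)\eta(2\tau))^8$. For $n=5$ or $3$ the corresponding space $S_6$ or $S_4$ for $\Gamma_0(2)$ vanishes.

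The form computation also requires checking that these forms extend holomorphically over the boundary strata and the quotient singularities. The cusp-form condition handles the cusps, and a Reid--Tai type check handles the finite quotient singularities, as in the Deligne argument and in Ludwig's extension results.

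For the Kodaira dimension I would compute pluricanonical forms the same way. Sections of $mK$ on the Kuga--Sato family over a curve are built from modular forms of weight $m(k+2)$ along the base, twisted by symmetric powers in the fibres. The Iitaka fibration should be the map $\SS_{1,n}^+\to \SS_1^+\cong X_0(2)$ or a similar map to a curve, since the fibres are quotients of abelian varieties and have Kodaira dimension zero. Hence $\kappa\le1$, with $\kappa=1$ exactly when $\dim S_{m(n+1)}(\Gamma_0(2))$ grows linearly in $m$ within the relevant isotypic component. For $n=7$ I expect $H^0(mK)$ to be spanned by the $m$-th power of the unique form. The reason is that $K$ restricted to the base is computed with degree $(n+1)/2\cdot\lambda$ minus the boundary, and $\deg$ of $4\lambda-\delta$ on $X_0(2)$ is zero: weight $8$ for $\Gamma_0(2)$ has exactly one cusp form, and the valence formula forces its zeros only at the cusps. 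This gives $\kappa=0$. For $n\ge8$ that degree becomes positive, giving $\kappa=1$. For even $n$ the odd-weight issue has to be handled via the pluricanonical squares, using forms of weight $2(n+1)$.

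The main obstacle is precisely this bookkeeping of the $\mathfrak S_n\times\{\pm1\}$ characters and of the boundary contributions, including the cusp of width $2$ and the irregularity issues. One must verify that the resulting divisor on $X_0(2)$ has degree exactly zero at $n=7$ and positive degree for $n\ge8$. I would first work this out on an explicit model, for instance the Legendre-type family $y^2=x(x^2+ax+b)$ with $\eta=(0,0)$, and compute the canonical bundle of its symmetric fibre powers directly.
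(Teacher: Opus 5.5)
Your route is genuinely different from the paper's, and it is sound in outline.

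\textbf{What the paper does.} The paper never leaves divisor classes. It pulls back the Bini--Fontanari formula for $K_{\mm_{1,n}}$ along $\pi\colon\SS_{1,n}^+\to\mm_{1,n}$ and adds the ramification divisor $\beta_0$. It then uses $\pi^*\delta_{\mathrm{irr}}=\alpha_0+2\beta_0$ together with $\alpha_0=\beta_0$ (both come from points of $\SS_{1,1}^+\cong\PP^1$) to get $K_{\SS_{1,n}^+}=\frac{n-7}{4}\alpha_0+\sum_{a=2}^{n}(a-2)\alpha_{0:a}-\alpha_{0:n}$. For $n\geq 8$, the positive multiple of a fibre of $\SS_{1,n}^+\to\SS_{1,1}^+$ gives $\kappa\geq 1$, with extension of pluricanonical forms supplied by Krug's theorem (Remark \ref{rmk:extgen1}); easy addition gives $\kappa\leq 1$. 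For $n=7$ the $\alpha_0$-coefficient vanishes, so $K$ is an effective boundary class. Each $A_{0:S}$ is then shown to be a fixed component via covering rational curves $\Gamma_S$ with $\Gamma_S\cdot\alpha_{0:S}=-1$.

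\textbf{How your approach compares.} Your Kuga--Sato picture carries the same numerics. Since $\lambda=\frac14\alpha_0$, the class $(n+1)\lambda-\alpha_0-\beta_0$ governing weight $n+1$ forms with the cusp conditions is exactly $\frac{n-7}{4}\alpha_0$. The terms $\sum(a-2)\alpha_{0:a}-\alpha_{0:n}$ are the zeros your forms acquire where points collide, and these are fixed components.

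Your approach gives sharper output: $h^{n,0}(\SS_{1,n}^+)=\dim S_{n+1}(\Gamma_0(2))$ for odd $n$. So $h^{7,0}=1$, and $h^{n,0}=0$ for $n=3,5$, consistent with Krug's rationality result. For $n=7$, the bound $P_m\leq1$ uses only necessary vanishing conditions at the two cusps.

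The paper's approach avoids building and resolving a Kuga--Sato model over a level with an elliptic point. In yours, the lower bound $\kappa\geq 1$ for $n\geq 8$ really requires that the invariant forms $(\eta(\tau)\eta(2\tau))^{8m}g(\tau)\,(d\tau\wedge dz_1\wedge\cdots\wedge dz_{n-1})^{\otimes m}$, with $g\in M_{m(n-7)}(\Gamma_0(2))$, extend. That means a Reid--Tai check at three places:
\begin{enumerate}
\item at the $\mathbb{Z}/4$-stabilizer over $\tau=\frac{1+i}{2}$, with eigenvalues $-1$ on $\tau$ and $\pm i$ on each $z_j$, giving ages $\frac{n+1}{4}$ and $\frac{3n-1}{4}$;
\item along the fixed locus of $-1$, with age $\frac{n-1}{2}$;
\item over the cusps.
\end{enumerate}
This is precisely the input the paper takes from Krug.

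\textbf{Three points in your write-up need repair.}

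First, $\SS_{1,n}^+$ has ordered markings, so the model is $\mathcal{E}^{n-1}/\pm1$ over $X_0(2)$, with no quotient by permutations. The top forms $f(\tau)\,d\tau\wedge dz_1\wedge\cdots\wedge dz_{n-1}$ are automatically alternating in the $z_j$. This describes the permutation action on $H^{n,0}$, as for Deligne's form on $\mm_{1,11}$; it is not a descent condition. Alternating forms do not descend to a quotient by permutations, so dividing by $\mathfrak{S}_n$ as you propose would give $H^{7,0}=0$.

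Second, the criterion ``$\kappa=1$ exactly when $\dim S_{m(n+1)}(\Gamma_0(2))$ grows linearly'' is false. Since $\dim S_{8m}(\Gamma_0(2))=2m-1$, it would predict $\kappa=1$ for $n=7$. The $m$-canonical forms correspond to $f\in M_{m(n+1)}(\Gamma_0(2))$ vanishing to order at least $m$ at both cusps. That space is $(\eta(\tau)\eta(2\tau))^{8m}\cdot M_{m(n-7)}(\Gamma_0(2))$. It is one-dimensional for $n=7$ and grows linearly in $m$ for $n\geq8$ (with $m$ even if $n$ is even). This is what your valence-formula argument actually uses.

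Third, with $\lambda$ the Hodge class one has $12\lambda=\alpha_0+2\beta_0=3\alpha_0$. So the degree-zero class at $n=7$ is $8\lambda-\alpha_0-\beta_0$, whereas $4\lambda-\alpha_0-\beta_0=-\alpha_0$. Your ``$4\lambda-\delta$'' is only right if $\lambda$ denotes the bundle of weight-two forms.
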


Note that Krug \cite[Corollary 4.25]{Kr} showed that $\SS_{1,n}^+$ is rational for $n\leq 6$. Coupled with Theorem \ref{thm:genus1}, one has a complete  description of the Kodaira dimension of $\SS_{1,n}^+$ for every $n$.

\vskip 6pt

\noindent {\small {\bf Acknowledgments:} Farkas was partly supported by the Berlin Mathematics Research Center MATH+ and by the ERC Advanced Grant SYZYGY (grant agreement No. 834172). This work was started when Verra was an invited professor of the Berlin Mathematical School. }

\section{Spin curves of genus 4 and the projected Veronese surface} 

We begin by recalling some of the basic properties of the projected Veronese surface, whic are essential for the proof of Theorem \ref{thm:kodaira4}. We let $L\cong \PP^1$ be the smooth rational curve and denote by $L^{[n]}\cong \mbox{Hilb}^n(L)$ its $n$-th symmetric product. We consider the Veronese embedding 
$$\nu\colon \PP^2\cong \bigl|\OO_{\PP^2}(1)\bigr|\longrightarrow \bigl|\OO_{\PP^2}(2)\bigr|\cong \PP^5, \ \ \  [v]\mapsto [v^2],$$ 
identifying the Veronese surface $\nu(\PP^2)$ with the space of rank one conics in $\PP^2$. We then introduce the \emph{squaring map}
\begin{equation}\label{eq:square}
\mbox{sq}\colon L^{[2]}\longrightarrow L^{[4]}, \  \ \ \mbox{sq}(x+y):=2(x+y).
\end{equation}
Under the identification $L^{[2]}\cong \PP^2$,  the map $\mbox{sq}$ can be given in coordinates as 
$$[a,b,c]\mapsto \bigl[a^2, b^2, c^2+2ab, 2ac,2bc\bigr]\in \PP^4.$$
In particular, the smooth surface $\VV:=\mbox{Im}(\mbox{sq})\subseteq L^{[4]}\cong \PP^4$ can be identified with an isomorphic projection of the Veronese surface $\nu(\PP^2)$. Severi showed that $\VV\subseteq \PP^4$ is the only smooth non-degenerate surface that can be obtained as a linear projection.

\vskip 4pt

It is a classical result \cite[Theorem 2.1.4]{Dol} that the variety ${\bf{D}}:=\bigl\{\ell\in \GG(1,4): \ell \cdot \VV \geq 3\bigr\}^{-}$ representing the closure in the Grassmannian of lines $\GG(1,4)\subseteq \PP^9$ of the family of trisecant lines to $\VV$ is a de Pezzo threefold being a smooth linear section of $\GG(1,4)$. Furthermore, the universal line over $\bf{D}$, that is, 
$$\xymatrix{
  & \mathcal{X}:=\Bigl\{(x,\ell): \ell\in {\bf{D}},\  x\in \ell\subseteq \PP^4 \Bigr\}\subseteq \PP^4\times {\bf{G}}(1,4)
   \ar[dl]_{\pi_1} \ar[dr]^{\pi_2} & \\
   \PP^4 & & \bf{D}       \\
                 }
$$
realizes $\pi_1$ as a birational map, that is, for a general point $o\in \PP^4$ there is a unique trisecant line $\ell\in \bf{D}$ passing through $o$.

\subsection{Plane quintic models of odd spin curves of genus $4$.} Let $[C, \vartheta, p]\in \Theta_{4,1}^-$ be a general point, consisting of a theta characteristic $\vartheta$ with $h^0(C, \vartheta)=1$ and write $\mbox{supp}(\vartheta)=p+t_1+t_2$. We denote by $A', A''\in W^1_3(C)$ the two minimal 
pencils on $C$, thus $A'\otimes A''=\omega_C$.

\begin{proposition}\label{prop:transv}
For a general point $[C, \vartheta, p]\in \Theta_{4,1}^-$, for both $A', A''\in W^1_3(C)$, we have 
$$H^0(C, A'(-2p))=0, \ H^0(C, A''(-2p))=0.$$
Furthermore, both divisors $A'(-p)$ and $A''(-p)$ are reduced.
\end{proposition}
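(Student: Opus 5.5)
The idea is to work on the canonical model. Write $C\subseteq Q\subseteq \PP^3$, where $Q$ is the unique quadric containing $C$, assumed smooth for general $C$. The two pencils $A',A''$ are then cut out by the two rulings of $Q$. The odd theta characteristic $\vartheta$ corresponds to a tritangent plane $H$ with $H\cdot C=2(p+t_1+t_2)$.

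The condition $H^0(C,A'(-2p))\neq 0$ says that the ruling $\ell'$ through $p$ is tangent to $C$ at $p$. The condition that $A'(-p)$ is non-reduced says that $\ell'$ is tangent to $C$ at some point $q\neq p$. Both conditions are closed in $\Theta_{4,1}^-$, so it suffices to show that each of them fails on a dense open set. Here I use that $\Theta_{4,1}^-$ is irreducible, or argue on each component; the incidence variety below is irreducible in any case, which settles this.

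First the vanishing $H^0(C,A'(-2p))=0$. Since $H$ is tangent to $C$ at $p$, the tangent line $T_pC$ lies in $H\cap {\bf T}_p(Q)$. If $H\neq {\bf T}_p(Q)$, this intersection is the line $T_p(H\cap Q)$, the tangent to the conic $c=H\cap Q$. That line is not a ruling, since a line in $H\cap {\bf T}_p(Q)$ that lies in $Q$ would force $H\supseteq \ell'$ or $\ell''$, and then $c$ would be a reducible conic. So the only bad case is when $H$ contains a ruling through $p$. I would rule this out by a dimension count on the incidence variety
$$\mathcal{I}=\bigl\{(H,c,p,t_1,t_2,C)\bigr\},$$
where $H$ is a plane with $c=H\cap Q$ smooth, $p,t_1,t_2\in c$, and $C\in |\OO_Q(3,3)|$ is tangent to $c$ at $p,t_1,t_2$.

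Since $H^1(Q,\OO_Q(2,2))=0$, restriction $H^0(\OO_Q(3,3))\to H^0(\OO_c(6))$ is surjective. Tangency at three points is therefore $6$ independent linear conditions, and the fibres of $\mathcal{I}$ over the choice of $(H,p,t_1,t_2)$ are $\PP^9$'s. Hence $\mathcal{I}$ is irreducible and dominates $\Theta_{4,1}^-$ (modulo $\mbox{Aut}(Q)$). The locus where $H$ is tangent to $Q$ has $c$ reducible, so it lies outside the open set where $c$ is smooth. This open set is nonempty, because a general odd theta characteristic has reduced support and its plane is not tangent to $Q$. So the first claim holds generically.

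For reducedness, I impose on the same linear $\PP^9$ the extra condition that $\ell'$ is tangent to $C$ at some point of $\ell'\setminus\{p\}$. For fixed data this is the discriminant condition on the restriction of $C$ to $\ell'$, a binary cubic with a prescribed root at $p$. It is one non-trivial condition, giving a hypersurface in $\PP^9$, provided it is not identically satisfied. The main obstacle is exactly this non-identity check, which amounts to exhibiting one member of the $\PP^9$ with $C\cdot\ell'=p+a+b$, $a\neq b$. I would first try reducible members such as $c'+D$, where $c'\in|\OO_Q(1,1)|$ is the conic $c$ itself (so tangency to $c$ is automatic) and $D\in|\OO_Q(2,2)|$ is chosen generically. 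Then $C\cdot \ell'=p+(D\cdot\ell')$, and $D\cdot\ell'$ consists of two distinct points for general $D$. The same kind of example also shows that the linear system restricted to $\ell''$ is not identically degenerate. Semicontinuity then propagates both statements to the general smooth member, and hence to a general point of $\Theta_{4,1}^-$.
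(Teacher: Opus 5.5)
Your proof is correct, and it takes a different route from the paper's.

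\emph{The paper's route.} It argues enumeratively on a fixed general curve. By irreducibility of $\Theta_{4,1}^-$, if a general $p$ were a ramification or antiramification point of $A'$ or $A''$, then every point in the support of every odd theta-characteristic of a general $C$ would be one. There are far more such support points than the $2\cdot(12+12)=48$ ramification and antiramification points of the two trigonal pencils. The figures $408$ and $56$ printed in the paper should read $3\cdot 120=360$ and $48$, and the count also tacitly needs the supports to give more than $48$ distinct points.

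\emph{Your route.} You work with the canonical model on $Q$. The first assertion gets a transparent geometric reason: it holds at every point where the tritangent plane $H$ cuts $Q$ in a smooth conic $c$, because then $T_pC=H\cap \mathbf{T}_p(Q)$ cannot be a ruling. The second becomes a parameter count on the explicit $\PP^9$ of $(3,3)$-curves tangent to $c$ at $p,t_1,t_2$. The member $c+D$ witnesses that the discriminant condition along $\ell'$ (and $\ell''$) is not identically satisfied.

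\emph{What each buys.} The paper's argument is shorter and uses only Riemann--Hurwitz and monodromy. Yours avoids all global enumeration. With one more count it even yields the irreducibility of $\Theta_{4,1}^-$ on which both arguments rest.

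Three things to tighten.

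First, your sentence justifying that the locus where $c$ is smooth is nonempty is circular as phrased. Nonemptiness comes from $\mathcal{I}$ itself, and density then uses irreducibility.

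Second, to back up your \emph{in any case} remark, add the missing count. If $H=\mathbf{T}_x(Q)$, then $x\in C$, since parity rules out $x\notin C$. Moreover $C\cdot\ell_1=x+2a$ and $C\cdot \ell_2=x+2b$ for the rulings $\ell_1,\ell_2$ through $x$. Such curves form a $\PP^{10}$ over the $4$-dimensional choice of $(x,a,b)$. So this locus has dimension $14-6=8$ in the pure $9$-dimensional $\Theta_{4,1}^-$, and every component meets the image of $\mathcal{I}$.

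Third, say why the general member of your $\PP^9$ is smooth. Its base locus is $\{p,t_1,t_2\}$. There, $c+D$ with $D$ avoiding these points is smooth, and Bertini applies elsewhere. This is needed for the count $15-6=9$. Your last step is then simply the intersection of nonempty open subsets of $\PP^9$, not a semicontinuity argument.
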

\begin{proof}
We use that $\Theta_{4,1}^-$ is irreducible, see also Remark \ref{rmk:irr}. Assuming that for a general point $[C, \vartheta, p]\in \Theta_{4,1}^-$, the point $p$ is either a ramification, or an antiramification point{\footnote{We say that $p\in C$ is an antiramification point of a pencil $f\colon C\rightarrow \PP^1$ if $f^{-1}(p)$ contains a ramification point of $f$, or equivalently, $f(p)\in \PP^1$ is a branch point.}} of a pencil $A'\in W^1_3(C)$, it follows that  $A', A''\in W^1_3(C)$ have in total at least $(g-1)\cdot 2^{g-1}(2^g+1)=408$ ramification or antiramification points. But the total number of ramification and antiramification points of $A'$ and $A''$ equals $4\cdot \bigl(2g+2\cdot \mbox{deg}(A')-2\bigr)=56$, a contradiction.   
\end{proof}

We denote by $\phi=\phi_{|\omega_C(-p)|}\colon C\rightarrow \PP^2$ the projected canonical curve and by $\Gamma:=\mbox{Im}(\phi)$ the corresponding quintic plane model. We write  
$$A'=\OO_C(p+n_1'+n_2') \ \mbox{ and } A''=\OO_C(p+n_1''+n_2''),$$
where by Proposition \ref{prop:transv} the points $n_1', n_2', n_1'', n_2''$ and $p$ are pairwise distinct. It follows that the curve $\Gamma$ has two nodes at the points $n'=\phi(n_1')=\phi(n_2')$ respectively $n''=\phi(n_1'')=\phi(n_2'')$. As explained in the Introduction, we denote by $$F=\bigl\langle n', n''\bigr \rangle \subseteq \PP^2$$ the line spanned by the nodes of $\Gamma$. Set also $n:=\phi(p)$. Since $n_1'+n_2'+n_1''+n_2''+p\in \bigl|\omega_C(-p)\bigr|$, it follows that $n\in F$.

\vskip 3pt

We  set $b_1:=\phi(t_1)$ and $b_2:=\phi(t_2)$ and introduce the line $L:=\bigl\langle b_1,b_2\bigr\rangle\subseteq \PP^2$. Note that
$$\Gamma\cdot L=\phi_*(C)\cdot L=\phi_*(2t_1+2t_2+p)=2b_1+2b_2+n,$$
showing, on the one hand that $n\in L$, on the other that $\Gamma$ is bitangent to $L$ at the points $b_1$ and $b_2$. The data contained in  $L$ and $F$ is that of a rank $1$ conic which is totally tangent to the quintic curve $\Gamma$, that is,
\begin{equation}\label{eq:deg_conic}
(L+F)\cdot \Gamma=2\bigl(n'+n''+n+b_1+b_2\bigr).
\end{equation}

\vskip 5pt
 
As explained in the Introduction, in order to parametrize the moduli space of odd spin curves of genus $4$, we now turn this construction around and fix a line $F\subseteq \PP^2$ and distinct points $n', n'', n\in F$. We also fix a general line $L\subseteq \PP^2$ passing through $n$. Let 
$$S:=\mbox{Bl}_{\{n,n',n''\}}(\PP^2)\stackrel{\epsilon}\longrightarrow \PP^2$$ be the blow-up of $\PP^2$ at $n, n'$ and $n''$  and denote by $E,E', E''$ the exceptional divisors over the points $n, n'$ respectively $n''$. Let $L'\subseteq S$ be the proper transform of  $L$.
 
 \vskip 4pt
 
We introduce the linear system of $2$-nodal plane quintics $\PP:=\bigl|5h-E-2E'-2E''\bigr|$, where $h\in |\epsilon^*\OO_{\PP^2}(1)|$ and consider the map $\rho\colon \PP\dashrightarrow L^{[4]}$ described by (\ref{eq:rho}) and which is the projectivization of the surjective map $\rho$ in the following exact sequence:
\begin{equation}\label{eq:resl}
0\longrightarrow H^0\bigl(S, \OO_{S}(4h-2E'-2E'')\bigr)\stackrel{\cdot L'}\longrightarrow H^0\bigl(S, \OO_S(5h-2E'-2E''-E)\bigr)\stackrel{\rho}\longrightarrow H^0\bigl(L', \OO_{L'}(4)\bigr)\longrightarrow 0.
\end{equation}
In this way, the map $\rho\colon \PP\dashrightarrow L^{[4]}$ is a linear projection.

\begin{definition}
Let $\rho\colon \cC:=\rho^*(\VV)\rightarrow \VV$ denote the cone over the projected Veronese surface $\VV$.
\end{definition} 
Clearly $\cC$ can be identified with the parameter space of those plane quintics $\Gamma\in \PP$ which are bitangent to the line $L$. Note that $\cC$ being a cone over a rational surface, is a rational $11$-dimensional variety.

\begin{proposition}\label{prop:dom2}
One has a dominant map $m^- \colon \cC\twoheadrightarrow \SS_4^-$.
\end{proposition}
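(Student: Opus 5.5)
We first define $m^-$ on a general point of $\cC$, then show it is dominant by reversing the construction of the previous subsection.

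\emph{Definition of $m^-$.} Let $\Gamma\in \cC$ be general. Its proper transform $C\subseteq S$ lies in $|5h-E-2E'-2E''|$. Since $\cC$ is irreducible, for general $\Gamma$ the curve $C$ is smooth of genus $6-2=4$, provided this holds for one member; such a member is supplied by the reverse construction below. The curve $\Gamma$ meets $L$ in $n+2b_1+2b_2$. Set $t_i:=\epsilon^{-1}(b_i)\cap C$ and $p:=E\cap C$, and put
$$m^-(\Gamma):=\bigl[C, \OO_C(p+t_1+t_2)\bigr].$$
We check that this is an odd theta-characteristic. By adjunction on $S$,
$$K_S+C=-3h+E+E'+E''+5h-E-2E'-2E''=2h-E'-E'',$$
so $\omega_C=\OO_C(2h-E'-E'')$. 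The conic $L+F$ pulls back to
$$2h=L'+E+F'+E+E'+E'',$$
where $L'=h-E$ and $F'=h-E-E'-E''$. Hence
$$2h-E'-E''=(L'+E)+(F'+E).$$
Restricting to $C$: we have $L'\cdot C=2t_1+2t_2$ and $E\cdot C=p$. Also $F'\cdot C=5-1-2-2=0$, so $F'\cap C=\emptyset$ for general $\Gamma$. Therefore
$$\omega_C\cong \OO_C(2t_1+2t_2+2p)=\vartheta^{\otimes 2}.$$
Finally, $h^0(\vartheta)\geq 1$ because $\vartheta$ is effective. Since $h^0(\vartheta)$ has constant parity on the irreducible family $\cC$, it suffices to exhibit one member with $h^0$ odd, and the reverse construction provides it.

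\emph{Dominance.} Take a general $[C,\vartheta]\in \cS_4^-$. Then $h^0(\vartheta)=1$, and we write $\mathrm{supp}(\vartheta)=p+t_1+t_2$, so that $[C,\vartheta,p]\in \Theta_{4,1}^-$. Proposition \ref{prop:transv} applies to this general point: the projection $\phi_{|\omega_C(-p)|}$ gives a quintic $\Gamma_0$ with exactly two nodes $n',n''$, and the points $n=\phi(p)$, $n'$, $n''$ are distinct and collinear on $F$. We also need $\phi$ birational onto its image with no further singularities. This holds because $C$ is not hyperelliptic and, by the genus count, a plane quintic of geometric genus $4$ has $\delta$-invariant exactly $2$. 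Moreover $\Gamma_0\cdot L=n+2b_1+2b_2$ with $L=\langle b_1,b_2\rangle\ni n$.

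The group $PGL_3$ acts transitively on configurations consisting of a line $F$, three distinct points on it, and a line $L\neq F$ through $n$: the stabilizer of three points on $F$ still moves the remaining pencil through $n$ transitively, and indeed the configuration has no moduli. So, after a projectivity, $\Gamma_0$ becomes a member of $\PP$ bitangent to the fixed $L$, i.e.\ a point of $\cC$. Its image under $m^-$ is $[C,\OO_C(p+t_1+t_2)]=[C,\vartheta]$, recovering the spin curve we started from. Since $[C,\vartheta]$ was general, $m^-$ is dominant. This point of $\cC$ also supplies the smooth, odd member needed in the first step.

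\emph{Main obstacle.} The delicate step is the genericity claim: the general point of $\SS_4^-$ must yield a quintic with only the two prescribed nodes, with $n$ distinct from $n'$ and $n''$, and with $b_1\neq b_2$ distinct from $n$. This is exactly the content of Proposition \ref{prop:transv} together with the irreducibility of $\Theta_{4,1}^-$, and I would rely on those results. As a consistency check, $\dim \cC=11$ while $\dim \SS_4^-=9$, so the fibres are $2$-dimensional. This matches the $3$ choices of the point $p$ in the support of $\vartheta$ together with the $2$-dimensional stabilizer of the configuration.
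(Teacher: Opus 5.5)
Your proposal is correct and follows the paper's route. You define $m^-$ via the normalization with $\vartheta=\OO_C(p+t_1+t_2)$, and you get $\vartheta^{\otimes 2}\cong\omega_C$ from the degenerate totally tangent conic $L+F$; your adjunction computation on $S$ is the same identity written on the blow-up. You then obtain dominance by running the projection construction from a general point of $\Theta_{4,1}^-$, and along the way you make explicit two points the paper leaves implicit: the oddness of $\vartheta$ via deformation invariance of parity, and the $PGL_3$-normalization of the configuration $(F,n,n',n'',L)$.
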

\begin{proof}
We take a general point $[\Gamma]\in \PP$, where the plane quintic $\Gamma\subseteq \PP^2$ satisfies the relation (\ref{eq:deg_conic}). Retaining the same notation, we may assume the points $n', n''$, $b_1$ and $b_2$ are pairwise distinct and we assume $\Gamma$ is tangent to $L$ at the points $b_1$ and $b_2$. Let
$$\nu\colon C\longrightarrow \Gamma$$ be the normalization map and set $p:=\nu^{-1}(n)$, where $\{n\}=L\cap F$. Let $t_i:=\nu^{-1}(b_i)$, for $i=1, 2$. Setting $\vartheta=\OO_C(p+t_1+t_2)$, from (\ref{eq:deg_conic}), we obtain that $\vartheta^{2}\cong \omega_C$, therefore $[C, \vartheta, p]\in \Theta_{4,1}^-$. We then define $m^{-}\bigl([\Gamma]\bigr)=[C, \vartheta]$. Our discussion above shows this map is dominant.
\end{proof}

\subsection{The unirational parametrization of $\SS_{4,10}^-$.} Let us now fix a general point 
$$\overline{p}=(p_1, \ldots, p_{10})\in \bigl(\PP^2\bigr)^{10},$$
and we do not distinguish whether we regard these points in $\PP^2$, or in $S$. We  denote by $\PP^3_{\overline{p}}:=\bigl\{\Gamma \in \PP: p_1, \ldots, p_{10}\in \Gamma\bigr\}=\bigl|\I_{\{p_1, \ldots, p_{10}\}/S}(5h-E-2E'-2E'')\bigr|$. We consider the intersection $R_{\overline{p}}:=\cC\cdot \PP^3_{\overline{p}}$  already introduced in (\ref{eq:re}). We have the following:

\begin{proposition}\label{prop:quartic1}
If ${\overline{p}}\in (\PP^2)^{10}$ is general, then $R_{\overline{p}}\subseteq \PP^3_{\overline{p}}$ is a smooth quartic rational curve.
\end{proposition}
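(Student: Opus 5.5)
Throughout, write $\rho_{\overline{p}}:=\rho|_{\PP^3_{\overline{p}}}$.

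The plan is to understand how the linear projection $\rho\colon \PP\dashrightarrow L^{[4]}\cong\PP^4$ restricts to $\PP^3_{\overline{p}}$. By (\ref{eq:resl}), $\rho$ is the projection from the center $\PP(H^0(S,\OO_S(4h-2E'-2E'')))$, which is the space of curves containing $L'$ as a component. This kernel has dimension $15-6=9$, so the center is a $\PP^8\subseteq \PP^{13}$. The $10$ general points impose independent conditions on both $\PP$ and this kernel. So $\PP^3_{\overline{p}}$ is a general $3$-plane obtained by imposing $10$ general point conditions, and $\PP^3_{\overline{p}}$ meets the center $\PP^8$ in a subspace of dimension $8-10<0$, that is, in the empty set. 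Hence $\rho_{\overline{p}}\colon \PP^3_{\overline{p}}\to \PP^4$ is a regular linear embedding, whose image is a hyperplane $H_{\overline{p}}\subseteq L^{[4]}$. Then
$$R_{\overline{p}}=\rho^*(\VV)\cap \PP^3_{\overline{p}}\cong \VV\cap H_{\overline{p}},$$
and the problem reduces to showing that $H_{\overline{p}}$ is a general hyperplane of $\PP^4$.

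Next I show that as $\overline{p}$ varies, the hyperplanes $H_{\overline{p}}$ dominate the dual space $(\PP^4)^\vee$. Concretely, $H_{\overline{p}}$ is the image of the $4$-dimensional space of quintics through $\overline{p}$ under the surjection $\rho$ of (\ref{eq:resl}). Equivalently, $H_{\overline{p}}^{\perp}$ is the one-dimensional space of linear forms on $H^0(L',\OO_{L'}(4))$ that vanish on $\rho(V_{\overline{p}})$. To see dominance, I count dimensions via the annihilator. Choose any hyperplane $H=\rho(W)$ with $W\supseteq \ker\rho$ of codimension one in $H^0(5h-E-2E'-2E'')$; then $\PP(W)$ is a $\PP^{12}$. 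The base locus of a general $\PP^{12}$ of this form is small, so I can pick $10$ general points $p_i\in S$ such that the subspace of quintics in $W$ through them has dimension exactly $14-10=4$ and maps isomorphically onto $H$. The justification is that $10$ general points impose independent conditions on $W$ and on $\ker\rho$ simultaneously: the quintics in $W$ through $\overline{p}$ form a $4$-dimensional space, while those in $\ker\rho$ through $\overline{p}$ form a $9-10<0$, hence zero, dimensional space. The resulting space then coincides with $V_{\overline{p}}$ by dimension, and $\rho(V_{\overline{p}})=H$. Thus every hyperplane arises, and a general $\overline{p}$ gives a general $H$.

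Finally, a general hyperplane section of the projected Veronese surface $\VV\subseteq \PP^4$ is the isomorphic image of a general hyperplane section of $\nu(\PP^2)\subseteq \PP^5$. That section is a smooth conic of $\PP^2$ embedded by $\OO(2)$, i.e. a smooth rational normal quartic. By Bertini it is smooth, and since the projection $\nu(\PP^2)\to \VV$ is an isomorphism, $\VV\cap H_{\overline{p}}$ is a smooth rational curve of degree $\deg\VV=4$. Transporting this back through the isomorphism $\rho_{\overline{p}}$, $R_{\overline{p}}$ is a smooth rational quartic curve in $\PP^3_{\overline{p}}$.

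The main obstacle is the independence statement in the second step: one must ensure that the $10$ general points impose independent conditions on $\ker\rho$, which corresponds to quartics nodal at $n',n''$, as well as on $W$. I would handle this by noting that the linear systems $|4h-2E'-2E''|$ and $|5h-E-2E'-2E''|$ are base point free away from the exceptional curves on $S$. General points impose independent conditions on any linear system, up to its dimension plus one.
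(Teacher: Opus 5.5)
Your first and third steps are correct and are exactly the paper's argument. No quartic nodal at $n',n''$ passes through $10$ general points, so $V_{\overline p}\cap\ker\rho=0$ and $\rho$ maps $\PP^3_{\overline p}$ isomorphically onto a hyperplane $H_{\overline p}\subseteq L^{[4]}$. A general hyperplane section of $\VV$ is then a smooth rational quartic.

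\textbf{The gap is in your second step}, the generality of $H_{\overline p}$. The paper only asserts this, and your argument for it fails. Since $W=\rho^{-1}(H)$ has codimension one in the $14$-dimensional space $V:=H^0\bigl(S,\OO_S(5h-E-2E'-2E'')\bigr)$, we have $\dim W=13$. So if $10$ general points imposed independent conditions on $W$, the quintics of $W$ through them would form a $3$-dimensional space, not a $4$-dimensional one. More precisely, because $\rho$ is injective on $V_{\overline p}$, one has $\rho(V_{\overline p})=H$ if and only if $V_{\overline p}\subseteq W$. That happens if and only if the $10$ points \emph{fail} to impose independent conditions on $W$, which is exactly the opposite of the independence you invoke. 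As written, your argument would send a single general $\overline p$ to every hyperplane $H$ at once. So the dominance of $\overline p\mapsto H_{\overline p}\in(\PP^4)^{\vee}$ is not established.

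One way to repair it is a Terracini-type argument.
\begin{enumerate}
\item Under $\rho^{\vee}$, the annihilator of $H_{\overline p}$ is the line $\langle \mathrm{ev}_{p_1},\ldots,\mathrm{ev}_{p_{10}}\rangle\cap(\ker\rho)^{\perp}\subseteq V^{\vee}$, and $\PP\bigl((\ker\rho)^{\perp}\bigr)\cong (L^{[4]})^{\vee}$.
\item Write a generator of this line as $\sum_i c_i\,\mathrm{ev}_{p_i}$. All $c_i$ are non-zero, because already the quintics of $V$ through $9$ general points meet $\ker\rho$ only in $0$.
\item By Terracini's lemma, the differential of $(\overline p,c)\mapsto\sum_i c_i\,\mathrm{ev}_{p_i}$ at such a point has image equal to the annihilator of the sections of $V$ singular at every $p_i$. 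This is all of $V^{\vee}$, since no plane quintic is singular at $7$ general points.
\item Hence this map is a submersion there. Its restriction to the preimage of $(\ker\rho)^{\perp}$ is a submersion onto $(\ker\rho)^{\perp}$, so $\overline p\mapsto H_{\overline p}$ is dominant.
\end{enumerate}
For smoothness alone, it would also suffice to show that the image of $\overline p\mapsto H_{\overline p}$ is not contained in the dual hypersurface $\VV^{\vee}$.
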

\begin{proof}
We claim that when $\overline{p}$ is general as above, then $\PP^3_{\overline{p}}\cong \rho_*\bigl(\PP^3_{\overline{p}}\bigr)$ is a general hyperplane in $L^{[4]}\cong \PP^4$. Indeed, we use the sequence (\ref{eq:resl}) and observe that by the generality assumption of the points $p_1, \ldots, p_{10}$, we have
$$H^0\bigl(S, \I_{\{p_1,\ldots, p_{10}\}/S}(4h-2E'-2E'')\bigr)=0,$$
therefore $\rho_*\bigl(\PP^3_{\overline{p}}\bigr)$ is a hyperplane in $L^{[4]}$. As $\overline{p}$ varies in $\bigl(\PP^2\bigr)^{10}$, then $R_{\overline{p}}$ is a general hyperplane section of $\VV$. By Bertini's theorem, it is therefore a smooth rational quartic curve.
\end{proof} 

We now globalize this construction and ultimately obtain  a variety dominating $\SS_{4,10}^-$. We denote by $\cU\subseteq \bigl(\PP^2\bigr)^{10}$ the dense open subset of configurations $\overline{p}\in\bigl(\PP^2\bigr)^{10}$ such that $\mbox{dim } \bigl|\I_{\{p_1, \ldots, p_{10}\}/S}(5h-E-2E'-2E'')\bigr|=3$.
We then consider the projective bundle
$$q\colon \P\longrightarrow \cU, \ \mbox{ with } \ q^{-1}(\overline{p})=\PP^3_{\overline{p}}, \mbox{ for each } \overline{p}\in \cU.$$
We introduce the subvariety $\cR=\bigl\{(\overline{p}, \Gamma):\overline{p}\in \cU, \ \Gamma\in R_{\overline{p}}\bigr\}\subseteq \P$. Using Proposition \ref{prop:quartic1}, we conclude that the projection 
\begin{equation}\label{eq:fibr_q}
\mathfrak{q} \colon \cR \longrightarrow \cU
\end{equation}
is birationally a fibration in rational quartic curves over the rational variety $\cU$. In particular, using \cite{GHS}, we obtain that  $\cR$ is rationally connected, though we shall soon prove that it is in fact unirational.

\vskip 3pt

There is a moduli map $m\colon \cR\dashrightarrow \SS_{4,10}^-$, which we now describe. Since there is a dominant map $\cR\twoheadrightarrow \cC$, a general point $(\overline{p}, \Gamma)\in \cR$ corresponds to an irreducible  plane quintic $\Gamma\subseteq \PP^2$, which is nodal at $n'$ and $n''$ and satisfies the equations, cf. (\ref{eq:deg_conic}): 
$$\Gamma \cdot F=2n'+2n''+n \ \   \mbox{ and } \ \   \Gamma\cdot L=2b_1+2b_2+n.$$
Choosing the points $p_1, \ldots, p_{10}$ generally, we may also assume they are smooth points of $\Gamma$.
As in the proof of Proposition \ref{prop:dom2}, we denote by $\nu\colon C\rightarrow \Gamma$ the normalization map and set $p=\nu^{-1}(n)$, $t_1=\nu^{-1}(b_1)$ and $t_2=\nu^{-1}(b_2)$. We then define the map
$$m\bigl([\overline{p}, \Gamma]\bigr)=\bigl[C, \OO_C(p+t_1+t_2),\  p_1, \ldots, p_{10}\bigr]\in\SS_{4,10}^-,$$
where we identify $p_i$ and $\nu^{-1}(p_i)$. We  summarize what has been achieved so far:

\begin{proposition}\label{prop:dom3}
The moduli map $m\colon \cR\longrightarrow \SS_{4,10}^-$ is dominant.
\end{proposition}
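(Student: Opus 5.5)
The plan is to factor $m$ through the inverse construction of Subsection 2.1 and then count dimensions along the fibres of the forgetful map $\SS_{4,10}^-\to\SS_4^-$. Since $\SS_{4,10}^-$ is irreducible of dimension $9+10=19$, it suffices to show that a general point $[C,\vartheta,x_1,\ldots,x_{10}]\in\SS_{4,10}^-$ lies in the image of $m$.

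First, I recover the plane model. Since $\vartheta$ is general odd, $h^0(C,\vartheta)=1$. I write $\vartheta=\OO_C(p+t_1+t_2)$ and pick the point $p$ of its support. By irreducibility of $\Theta_{4,1}^-$, the triple $[C,\vartheta,p]$ is general in $\Theta_{4,1}^-$, so Proposition \ref{prop:transv} applies. The projected canonical curve $\Gamma=\phi_{|\omega_C(-p)|}(C)$ is then a plane quintic with exactly two nodes $n',n''$. The point $n=\phi(p)$ lies on $F=\langle n',n''\rangle$, and $\Gamma$ is bitangent at $b_i=\phi(t_i)$ to the line $L=\langle b_1,b_2\rangle$, which passes through $n$. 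The two lines $F,L$ together with the three points $n,n',n''$ on $F$ form a configuration on which $\mathrm{PGL}_3$ acts with a dense orbit: it acts transitively on triples of collinear points together with a second line through one of them. Composing $\phi$ with a projectivity, I may therefore assume that $F,L,n,n',n''$ are the fixed data used to define $\PP$, $\rho$ and $\cC$. Hence $\Gamma\in\cC$, and $m^-([\Gamma])=[C,\vartheta]$ as in Proposition \ref{prop:dom2}.

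Next, I handle the marked points. Set $p_i:=\phi(x_i)\in\PP^2$. Because the $x_i$ are general on $C$, the $p_i$ are distinct smooth points of $\Gamma$ away from $n,n',n'',b_1,b_2$. By construction $\Gamma\in\PP^3_{\overline{p}}\cap\cC=R_{\overline{p}}$, so $(\overline{p},\Gamma)\in\cR$ and $m(\overline{p},\Gamma)=[C,\vartheta,x_1,\ldots,x_{10}]$. This argument is valid provided $\overline{p}\in\cU$, i.e. $\dim|\I_{\overline{p}/S}(5h-E-2E'-2E'')|=3$.

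The main obstacle is that the points $\overline{p}$ arising this way all lie on the quintic $\Gamma$, so they are not general in $(\PP^2)^{10}$. I must check that ten general points of $\Gamma$ impose independent conditions on $\PP$. I would use the restriction sequence
$$0\to H^0(\OO_S)\to H^0\bigl(\OO_S(5h-E-2E'-2E'')\bigr)\to H^0\bigl(C,\omega_C\otimes\OO_C(p)\otimes\OO_C(h)\bigr),$$
noting that the restricted series on $C$ has degree $21$ and projective dimension $13$. Ten general points of $C$ impose independent conditions on any linear series of dimension at least $10$, which gives $\dim\PP^3_{\overline{p}}=3$.

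Alternatively, the same conclusion follows from a dimension count. The incidence $\cR$ has dimension $20+1=21$. The fibre of $m$ over $[C,\vartheta,\overline{x}]$ is at most the $2$-dimensional stabilizer of the flag $(F,L,n,n',n'')$, together with the finite choice of $p$ in the support of $\vartheta$. Since $21-2=19=\dim\SS_{4,10}^-$, the map $m$ is dominant.
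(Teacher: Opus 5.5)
Your proposal is correct and follows the paper's route: the paper's proof is just the observation that reversing the plane-quintic construction (Propositions \ref{prop:transv} and \ref{prop:dom2}) recovers a general pointed odd spin curve. Your check that ten general points of $\Gamma$ give $\overline{p}\in\cU$ fills in a detail the paper leaves implicit.

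The arithmetic in that check needs fixing. One has $\OO_C(\widetilde{\Gamma})\cong\omega_C^{\otimes 3}(-2p)$, of degree $\widetilde{\Gamma}^2=16$ (not $21$). The restricted series is this complete $g^{12}_{16}$, namely the image of the $14$-dimensional $H^0(\OO_S(5h-E-2E'-2E''))$ modulo the $1$-dimensional kernel spanned by $\Gamma$. The count $13-10+1=4$ then gives $\dim\PP^3_{\overline{p}}=3$, whereas your stated projective dimension $13$ would give $4$.
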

\begin{proof} Follows immediately by combining Propositions \ref{prop:transv} and \ref{prop:dom2}.    
\end{proof}

\vskip 3pt

The canonical way to show that $\cR$ is unirational, would be by constructing a rational section of the fibration $\mathfrak{q} \colon \cR\rightarrow \cU$ defined in (\ref{eq:fibr_q}).  It is however unclear whether such a section exists. Instead, we first construct a natural unirational \emph{bisection} of $\mathfrak{q}$. To that end, we fix a conic 
$$Q\subseteq \VV\subseteq \PP^4.$$
Note that $\VV$ has a $2$-dimensional family of such conics which are image of lines in $\PP^2$ under the map $\mathrm{sq}$ described in (\ref{eq:square}).
We denote by $\mathrm{Hilb}^2(\mathfrak{q})\rightarrow \cU$ the relative Hilbert scheme of length $2$ subschemes of the fibration $\mathfrak{q}$.
\begin{definition}
The rational section $\sigma\colon \bigl(\PP^2\bigr)^{10}\dashrightarrow \mathrm{Hilb}^2(\mathfrak{q})$ is defined by setting 
$\sigma(\overline{p}):=R_{\overline{p}}\cdot Q$.
\end{definition}
Note that $\sigma(\overline{p})$ corresponds to the intersection in $L^{[2]}$ of a conic corresponding to $R_{\overline{p}}$ with the fixed line whose image under $\mathrm{sq}$ defines the conic $Q$, therefore $\sigma(\overline{p})$ is indeed a length $2$ subscheme of $R_{\overline{p}}$. Observe that $\sigma$ also gives rise to a rational morphism 
\begin{equation}\label{map:f}
f\colon \cU\dashrightarrow Q^{[2]},  \ \ \overline{p}\mapsto R_{\overline{p}}\cdot Q.
\end{equation}
\begin{proposition}\label{prop:f}
The fibration $f$ is birational to a Zariski locally trivial ${\bf{G}}(1,10)$-bundle over $Q^{[2]}$.
\end{proposition}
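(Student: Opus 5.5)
The plan is to factor $f$ through the hyperplane $H_{\overline{p}}:=\rho_*\bigl(\PP^3_{\overline{p}}\bigr)\subseteq L^{[4]}\cong \PP^4$. By the proof of Proposition \ref{prop:quartic1}, $H_{\overline{p}}$ is a hyperplane and $R_{\overline{p}}=H_{\overline{p}}\cdot \VV$, so $f(\overline{p})=H_{\overline{p}}\cdot Q$. Let $\langle Q\rangle\cong \PP^2$ be the plane spanned by the conic. A general hyperplane meets $\langle Q\rangle$ in a line, and that line cuts $Q$ in a length $2$ subscheme. This gives a birational identification $Q^{[2]}\cong \langle Q\rangle^{\vee}$: a point $\xi\in Q^{[2]}$ corresponds to the line $\langle\xi\rangle$. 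So $f$ is the composition
$$\cU\stackrel{h}\dashrightarrow (\PP^4)^{\vee}\dashrightarrow \langle Q\rangle^{\vee}\cong Q^{[2]},\qquad \overline{p}\mapsto H_{\overline{p}}\mapsto H_{\overline{p}}\cap \langle Q\rangle .$$
The second map is a linear projection from the point $[\langle Q\rangle]^{\perp}\in(\PP^4)^{\vee}$. Its fibres are the pencils of hyperplanes containing a fixed line of $\langle Q\rangle$, so it is birationally a Zariski locally trivial $\PP^1$-bundle. It therefore suffices to understand the fibres of $h$.

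Next I describe $h$ concretely. By the sequence (\ref{eq:resl}), $\rho$ is a linear projection $\PP\dashrightarrow \PP^4$ with centre the $8$-dimensional linear space $\Lambda:=L'+\bigl|4h-2E'-2E''\bigr|$. For $\overline{p}\in\cU$ the space $\PP^3_{\overline{p}}$ meets $\Lambda$ trivially, since $H^0\bigl(S, \I_{\{p_1,\ldots,p_{10}\}/S}(4h-2E'-2E'')\bigr)=0$. Hence $H_{\overline{p}}=\rho(\PP^3_{\overline{p}})$, and $\rho^{-1}(H_{\overline{p}})=\langle \Lambda, \PP^3_{\overline{p}}\rangle$ is a hyperplane $\mathbb H\subseteq \PP$ containing $\Lambda$. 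Dually, $\mathbb H$ is a point of $\PP^{13\vee}$, and the hyperplanes of $\PP$ of the form $\{\Gamma: \Gamma\ni x\}$ are the images of points $x\in S$ under the map $\psi\colon S\to \PP^{13\vee}$ given by the linear system $\PP$. So $h(\overline{p})$ is the hyperplane through $\Lambda$ spanned by $\psi(p_1),\ldots,\psi(p_{10})$ together with $\Lambda^{\perp}$, which is the point $[\mathbb H]$. The fibre of $h$ over a fixed $\mathbb H$ is the set of ordered $10$-tuples of points of $S$ whose associated quintics all lie in $\mathbb H$. Equivalently, $\mathbb H$ restricted to $S\setminus L'$ defines a curve: since $\mathbb H\supseteq \Lambda$, it corresponds to a section of $\OO_{L'}(4)$ dual to a point of $L^{[4]}$. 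I therefore expect the condition ``$p_i\in\mathbb H$'' to cut out a curve $D_{\mathbb H}\subseteq S$ for each $i$.

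The fibre would then be $D_{\mathbb H}^{10}$, and $10$ general points would impose a codimension-$10$ condition. Alternatively, I rewrite the fibre directly as pairs ($10$ points, a $3$-space) and use Grassmannians. Fix $\xi\in Q^{[2]}$. The fibre $f^{-1}(\xi)$ consists of the $\overline{p}$ with $\PP^3_{\overline{p}}\subseteq \mathbb H_\xi$, where $\mathbb H_\xi\cong \PP^{12}$ is a member of the pencil of hyperplanes of $\PP$ containing $\rho^{-1}(\langle\xi\rangle)$, which is an $11$-dimensional linear space. Conversely, choose inside this $\PP^{11}=\rho^{-1}(\langle\xi\rangle)$ a complement: the $\PP^3$'s must contain the $\PP^1_\xi$ lying over $\xi$ through the fixed cone direction. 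The correct statement, matching ${\bf G}(1,10)$, should be that $\PP^3_{\overline{p}}$ is determined by a line in a fixed $\PP^{10}$ transversal to a fixed $\PP^1$, and that $\overline{p}$ is recovered birationally from $\PP^3_{\overline{p}}$. The latter holds because a general $\PP^3\subseteq\PP$ has base locus on $S$ consisting of exactly $\binom{?}{}$ points — the $10$ points plus residual ones — and one orders $10$ of them; this is finite rather than birational. So I would instead record the base change to ordered points.

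\textbf{Main obstacle.} The hardest step is making the dimension count and the precise identification of the fibre with ${\bf G}(1,10)$ rigorous: showing that the map $\overline{p}\mapsto \PP^3_{\overline{p}}$ is birational onto its image, and identifying exactly which linear spaces occur. First I would verify $\dim\cU=20$ against $\dim Q^{[2]}+\dim {\bf G}(1,10)=2+18$, which matches. Local triviality would then follow from the fact that the construction is linear-algebraic over the rational base $Q^{[2]}$, with the relevant subbundles (namely $\rho^{-1}(\langle\xi\rangle)$ and the pencil direction) varying algebraically in $\xi$. This gives a Zariski locally trivial Grassmannian bundle.
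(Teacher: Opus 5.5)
Your proposal does not prove the statement. The whole content of the proposition is the identification of the general fibre of $f$ with ${\bf G}(1,10)$, and you never construct a map from $f^{-1}(\xi)$ to any Grassmannian. Your last paragraph concedes this: the count $20=2+18$ is only a consistency check, and local triviality is asserted rather than shown.

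Several intermediate claims are also off by a dimension:
\begin{itemize}
\item The hyperplanes of $\PP^4$ containing the plane $\langle Q\rangle$ form a line in $(\PP^4)^{\vee}$, not a point. Those containing a line $\langle\xi\rangle\subseteq\langle Q\rangle$ form a net, not a pencil. Hence $(\PP^4)^{\vee}\dashrightarrow\langle Q\rangle^{\vee}$ has $2$-dimensional fibres, not $\PP^1$'s.
\item $\rho^{-1}(\langle\xi\rangle)$ is the join of the centre $\Lambda\cong\PP^8$ with a line, hence a $\PP^{10}$, not an $11$-dimensional space. The hyperplanes of $\PP$ through it again form a net.
\item The condition $\PP^3_{\overline{p}}\subseteq\mathbb{H}$ says that $[\mathbb{H}]$ lies in the span of $\psi(p_1),\ldots,\psi(p_{10})$. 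This is a single codimension-$4$ condition on the whole configuration, not a condition $p_i\in D_{\mathbb H}$ on each point separately, so the description of the fibre as $D_{\mathbb H}^{10}$ is wrong.
\end{itemize}

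The step you are missing is the one the paper takes. Let $\xi=x+x'$ and $\ell=\langle x,x'\rangle$. Then $f(\overline p)=\xi$ if and only if $\ell\subseteq\rho(\PP^3_{\overline p})$. Since $\PP^3_{\overline p}\cap\Lambda=\emptyset$ and $\rho$ is a linear projection, this holds if and only if $\PP^3_{\overline p}\cap\rho^{-1}(\ell)$ is a line in $\rho^{-1}(\ell)\cong\PP^{10}$. The paper then identifies $f^{-1}(\xi)$ with ${\bf G}(1,\rho^{-1}\ell)$ via $\overline p\mapsto\PP^3_{\overline p}\cap\rho^{-1}(\ell)$.

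Your worry that recovering $\overline p$ from linear data is only finite is nevertheless legitimate, and it applies to this map too:
\begin{itemize}
\item The map is invariant under the $\mathfrak{S}_{10}$-action permuting the points.
\item Its fibre over a general line $m\subseteq\rho^{-1}(\ell)$ consists of the ordered $10$-tuples of distinct points among the $(5h-E-2E'-2E'')^2=16$ base points of the pencil $m$.
\end{itemize}
So the equality $\dim f^{-1}(\xi)=18=\dim{\bf G}(1,10)$ only gives a dominant, generically finite map. Concretely, $f^{-1}(\xi)$ is birational to a $7$-sheeted cover of $(\PP^2)^9$: nine general points determine $m$, and $p_{10}$ is one of the $7$ residual base points. ``Recording the base change to ordered points'' cannot turn this into a birational isomorphism. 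To get the proposition as stated, and the rationality of $\widetilde{\cU}$ deduced from it, one needs an argument for the rationality of this cover. Neither your sketch nor the one-line identification in the paper provides one.
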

\begin{proof}
We choose points $x, x'\in Q$ and let $\ell=\langle x,x'\rangle\subseteq \langle Q\rangle$ be the line spanned by $x$ and $x'$ in $\PP^4$. Then $f^*(x+x')$ corresponds to those points $\overline{p}\in \cU$ such that $x, x'\in \rho(\PP^3_{\overline{p}})$. Identifying $x, x'\in \VV$ with the divisors $2(a+b)$ respectively $2(a'+b')$, where $a,a', b,b'\in L$, then $f^*(x+x')$ corresponds to those points $\overline{p}$ for which there exists nodal quintics $\Gamma, \Gamma'\in \PP^3_{\overline{p}}$ such that 
$$\Gamma\cdot L=n+2a+2b, \ \ \mbox{ and } \ \ \Gamma\cdot L'=n+2a'+2b'.$$
Then recalling that the map $\rho$ defined in (\ref{eq:rho}) is a linear projection, this immediately implies that $\mbox{dim } \PP^3_{\overline{p}}\cdot \rho^*\bigl(\ell)\geq 1$. Conversely, if the intersection $\rho^*(\ell)\cdot \PP^3_{\overline{p}}$ is positive dimensional, there exist two points $x,x'\in \ell\cdot Q\subseteq \langle Q\rangle$ such that $f(\overline{p})=x+x'$. We conclude that the general fibre $f^*(x+x')$ is isomorphic to the Grassmannian of lines ${\bf{G}}(1,\rho^*\ell)$. Varying now $x+x'\in Q^{[2]}$, we obtain that $f$ is birational to a locally trivial ${\bf{G}}(1,10)$-bundle over $Q^{[2]}$. 
\end{proof}

We now denote by $\tau\colon Q\times Q\rightarrow Q^{[2]}$ the double cover given by $\tau(x,y)=x+y$ and set 
$$\tilde{f}\colon \widetilde{\cU}:=\cU\times_{Q^{[2]}} \bigl(Q\times Q\bigr)\longrightarrow Q\times Q.$$
Therefore $\widetilde{\cU}$ is the parameter space of pairs $(\overline{p},x)$, where $\overline{p}\in \cU$ and $x\in \PP^3_{\overline{p}}\cdot Q$. 
\begin{corollary}\label{cor:deg2}
The morphism $\tilde{f}\colon \widetilde{\cU}\rightarrow Q\times Q$ is birational to a locally trivial $\GG(1,10)$-bundle. In particular,  $\widetilde{\cU}$ is a rational variety.
\end{corollary}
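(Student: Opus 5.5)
The plan is to deduce the corollary directly from Proposition \ref{prop:f} by base change along the double cover $\tau$. By Proposition \ref{prop:f}, there is a dense open subset $V\subseteq Q^{[2]}$ over which $f$ is birational to a Zariski locally trivial $\GG(1,10)$-bundle. Concretely, we may pick an open cover $\{V_i\}$ of $V$ and birational isomorphisms $f^{-1}(V_i)\dashrightarrow V_i\times \GG(1,10)$ compatible with the projections.

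The first step is to pull back this description along $\tau$. By definition $\widetilde{\cU}=\cU\times_{Q^{[2]}}(Q\times Q)$. Over $\tau^{-1}(V_i)$ we obtain
$$\tilde f^{-1}\bigl(\tau^{-1}(V_i)\bigr)\dashrightarrow \tau^{-1}(V_i)\times \GG(1,10).$$
Zariski local triviality is preserved under base change, so $\tilde f$ is birationally a Zariski locally trivial $\GG(1,10)$-bundle over $\tau^{-1}(V)\subseteq Q\times Q$.

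There is one point to check here. The fibre product could a priori be reducible, or the birational identification could fail generically, if $f$ were not dominant or if the image of $f$ lay inside the branch locus of $\tau$ (the diagonal). I would rule this out using Proposition \ref{prop:f}: since $f$ is a bundle over a dense open subset of $Q^{[2]}$, the general $\overline{p}$ gives a reduced length-$2$ scheme $R_{\overline{p}}\cdot Q$, i.e.\ two distinct points. Hence $\tau$ is étale over a dense open subset of the image. Moreover, $\widetilde{\cU}$ is birational to the product $\tau^{-1}(V_i)\times \GG(1,10)$, which is irreducible. The interpretation of $\widetilde{\cU}$ as the space of pairs $(\overline{p},x)$ with $x\in \PP^3_{\overline{p}}\cdot Q$ comes from the projection of $Q\times Q$ to its first factor.

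For rationality, note that $Q\cong \PP^1$, so $Q\times Q\cong \PP^1\times\PP^1$ is rational, and $\GG(1,10)$ is rational. Since $\widetilde{\cU}$ is birational to $(Q\times Q)\times \GG(1,10)$, it is rational.

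The only genuine obstacle is making sure that the local triviality in Proposition \ref{prop:f} is Zariski and not merely étale, so that it survives pullback to a trivial product on an open subset. The proof of that proposition exhibits the fibre over $x+x'$ as $\GG(1,\rho^*\ell)$, where $\rho^*\ell\cong\PP^{10}$ varies in a projective bundle. That bundle is the projectivization of a vector bundle coming from the sequence (\ref{eq:resl}), and such a bundle is Zariski locally trivial. Relative Grassmannians of Zariski-locally trivial vector bundles are Zariski locally trivial, so this property transfers through the base change.
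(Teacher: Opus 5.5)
Your argument is the paper's argument: the corollary is obtained by pulling back the bundle structure of Proposition \ref{prop:f} along $\tau$. As a formal deduction from that proposition, your steps (flatness of $\tau$, irreducibility of the fibre product, rationality of $(Q\times Q)\times\GG(1,10)$) are fine. The problem lies in the input, at exactly the point you single out in your last paragraph: the fibre $f^{-1}(x+x')$ is \emph{not} identified with $\GG(1,\rho^*\ell)$. Consider the natural map $f^{-1}(x+x')\to\GG(1,\rho^*\ell)$, $\overline{p}\mapsto\Lambda_{\overline{p}}:=\PP^3_{\overline{p}}\cap\rho^*\ell$. A tuple $\overline{p}$ lies over a line $\Lambda$ if and only if $\Lambda\subseteq\PP^3_{\overline{p}}$. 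That holds if and only if $p_1,\ldots,p_{10}$ all lie in the base locus of the pencil $\Lambda$, which is a finite set of $(5h-E-2E'-2E'')^2=16$ points of $S$. So the map is constant on the $10!$ reorderings of $\overline{p}$, and typically other $10$-element subsets of the $16$ base points give further preimages. The equality $\dim f^{-1}(x+x')=18=\dim\GG(1,10)$ only shows the map is dominant and generically finite, not birational.

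Consequently, consider the relative Grassmannian of lines in the $\PP^{10}$-bundle $\{\rho^*\ell\}$ over $Q^{[2]}$. It is indeed Zariski locally trivial and becomes rational after base change to $Q\times Q$. But $\widetilde{\cU}$ only maps to it generically finitely, so nothing about the rationality of $\widetilde{\cU}$ follows, and the Zariski-versus-\'etale question you raise is not the real obstacle. Nor can bookkeeping close this gap: $\widetilde{\cU}$ by itself already dominates $\SS_{4,10}^-$ via $m\circ\tilde{\sigma}$. Indeed, a general $[C,\vartheta,p_1,\ldots,p_{10}]$ is reached by the projectivity sending the two nodes and the two tangency points of its quintic model to $n',n''$ and to any $a,b\in L$ with $2(a+b)\in Q$. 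Hence rationality of $\widetilde{\cU}$ carries the full content of Theorem \ref{thm:kodaira4}. A proof of the corollary needs a genuinely new argument for the rationality of the fibres of $f$, for instance of the variety of ordered $10$-tuples of base points of pencils in $\rho^*\ell$. Neither your proposal nor the proof of Proposition \ref{prop:f} as written provides one.
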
 
\begin{proof}
Follows immediately by base change from Proposition \ref{prop:f}.
\end{proof}

We now base change the fibration in rational curves $\mathfrak{q}$, to obtain the family
$$\tilde{\mathfrak{q}}\colon \widetilde{\cR}=\cR\times _{\cU} \widetilde{\cU}\longrightarrow \widetilde{\cU}.$$
The general fibre of $\tilde{\mathfrak{q}}$ is still a rational quartic curve. Note that there exists a rational section $\tilde{\sigma}\colon \widetilde{\cU}\dashrightarrow \widetilde{\cR}$ given by $\tilde{\mathfrak{q}}\bigl(\overline{p}, x):=\bigl(\overline{p}, R_{\overline{p}}, \Gamma_x\bigr).$
Here, $\Gamma_x\in R_{\overline{p}}$ denotes the nodal quintic curve corresponding to the point $x\in Q\cdot \PP^3_{\overline{p}}$.
The situation is summarized by the following diagram:   

$$\xymatrix@R=3pc@C=4pc{
\widetilde{\cR}\ar[d]_{\tilde{\mathfrak{q}}}  \ar[r]& \cR \ar[d]_{\mathfrak{q}} \ar@{->>}[dr]^{m}\\
\widetilde{\cU}\ar@/_/[u]_{\tilde{\sigma}} \ar[r]^{\mathrm{pr}_1} & \cU & \SS_{4,10}^-\\
                 }
$$

\vskip 3pt

\noindent \emph{Proof of Theorem \ref{thm:kodaira4}.} Since $\widetilde{\cU}$ is rational by Corollary \ref{cor:deg2} and $\tilde{\mathfrak{q}}$ is generically a fibration in rational varieties which possesses a rational section, it follows that $\widetilde{\cR}$ is a rational curve over the function field of $\widetilde{\cU}$ having a rational point. We conclude that $\widetilde{\cR}$ is a rational variety. Using Proposition \ref{prop:dom3} and the diagram above, we have a dominant map $\widetilde{\cR}\dashrightarrow \SS_{4,10}^-$, therefore we obtain that $\SS_{4,10}^-$ is unirational.
\hfill $\Box$

\section{Moduli spaces of pointed spin curves}

We recall basic facts concerning the moduli space $\SS_{g,n}$ of $n$-pointed spin curves of genus $g$, largely following \cite{Cor}, \cite{F2}. If $(X, p_1, \ldots, p_n)$ is an $n$-pointed nodal curve, a smooth
rational component $E\subseteq X$ is said to be \emph{exceptional} if
$\bigl|E\cap \overline{(X\setminus E)}\bigr|=2$ and no marked point $p_i$ lies on $E$. The curve $X$ is said to be
\emph{quasi-stable} if any two exceptional components of $X$ are
disjoint. A quasi-stable curve is thus obtained from a stable pointed curve
by blowing-up each node at most once. 

\begin{definition}\label{def:prymstructures} An $n$-pointed \emph{stable spin curve} of genus
$g$ consists of a triple $(X, p_1, \ldots, p_n, \vartheta, \beta)$, where $(X, p_1, \ldots, p_n)$ is an $n$-pointed  genus
$g$ quasi-stable curve, $\vartheta\in \mathrm{Pic}^{g-1}(X)$ is a line bundle
of total degree $g-1$ such that $\vartheta_{E}=\OO_E(1)$ for every exceptional
component $E\subseteq X$, and $\beta\colon \vartheta^{\otimes 2}\rightarrow
\omega_X$ is a sheaf homomorphism which is generically non-zero along
each non-exceptional component of $X$.
\end{definition}

We denote by $\SS_{g,n}$ the moduli stack of $n$-pointed spin curves of genus $g$, which splits into two connected components $\SS_{g,n}^-$ and $\SS_{g,n}^+$ depending on the parity of $h^0(X, \vartheta)$. One has a finite morphism $\pi\colon \SS_{g,n}\rightarrow \mm_{g,n}$ of degree $2^{2g}$. We also set $\SS_{g, [n]}:=\SS_{g,n}/\mathfrak{S}_n$, where the action is by permuting the marked points. Let $\lambda:=\pi^*(\lambda)$ and $\psi_i:=\pi^*(\psi_i)\in CH^1(\SS_{g,n})$ be the standard codimension one tautological classes; we refer to \cite{AC} for background on divisor classes on $\mm_{g,n}$.  

\vskip 3pt

\subsection{Boundary divisors on $\SS_{g,n}$} We introduce the standard notation for the boundary divisors on both components of $\SS_{g,n}^+$ as follows. For $0\leq i\leq g$ and a subset $S\subseteq [n]$, we denote by $A_{i:S}\subseteq \SS_{g,n}^+$ (respectively $B_{i:S}\subseteq \SS_{g,n}^+$) the closure of the locus consisting of spin curves $[C\cup_y D, p_1, \ldots, p_n, \vartheta_C, \vartheta_D]$, where $C$ and $D$ are smooth curves of genera $i$ and $g-i$ meeting at a point $y$, both theta-characteristics $\vartheta_C$ and $\vartheta_D$ are even (respectively odd), and the marked points lying on the component $C$ are precisely those labelled by the set $S$. We set $\alpha_{i:S}:=[A_{i:S}]$ and $\beta_{i:S}:=[B_{i:S}]$. Note that $B_{0:S}=\emptyset$ for all $S$. For $2\leq s\leq n$, we set 
\begin{equation}\label{eq:alpha_s}
\alpha_{0:s}:=\sum_{S\in {[n] \choose s}} \alpha_{0:S}\in CH^1(\SS_{g,n}^+).
\end{equation}

\vskip 1pt

Similarly, we let $A_{i:S}\subseteq \SS_{g,n}^-$ be the closure of the locus consisting of those spin curves $[C\cup_y D, p_1, \ldots, p_n, \vartheta_C, \vartheta_D]$, where $C$ and $D$ are smooth curves of genera $i$ and $g-i$ meeting at a point $y$, the theta-characteristic $\vartheta_C$ is odd, whereas $\vartheta_D$ is even, and the marked points lying on $C$ are precisely those labelled by $S$. We set $\alpha_{i:S}:=[A_{i:S}]$. To ensure uniformity with the notation from \cite{FV2}, we also set $B_{i:S}:=A_{g-i:S^c}$. Note that in this case, $A_{0:S}=\emptyset$ for all $S$. Then for $s\geq 2$,  we can define the class 
\begin{equation}\label{eq:beta_s}
\beta_{0:s}:=\sum_{S\in {[n] \choose s}} \beta_{0:S}\in CH^1(\SS_{g,n}^-).
\end{equation}

We recall the description of the pull-back of the boundary divisor $\Delta_{\mathrm{irr}}$ of $\mm_{g,n}$ under the finite cover $\pi\colon \SS_{g,n}\rightarrow \mm_{g,n}$. For a point $[X, p_1, \ldots, p_n, \vartheta, \beta]\in \SS_{g,n}$ corresponding to a stable model $\mbox{st}(X)=C_{yq}:=C/y\sim q$,
with $[C, y, q]\in \cM_{g-1, 2}$, there are two possibilities for the spin structure $\vartheta$,
depending on whether $X$ has an exceptional component or not.
If $X$ has no exceptional component and $\vartheta_C:=\nu^*(\vartheta)$, where $\nu\colon C\rightarrow X$
denotes the normalization map, then $\vartheta_C^{\otimes 2}=\omega_C(y+q)$.
For each choice of $\vartheta_C$ as above, there
is precisely one choice of gluing the fibres $\vartheta_C(y)$ and
$\vartheta_C(q)$ such that $h^0(X, \eta)$ has a prescribed parity. We denote by $A_0$ the
closure in $\SS_{g,n}$ of the locus of spin curves as above.

If $X=C\cup_{\{y, q\}} E$, where $E$ is an exceptional component,
then the restriction $\vartheta_C$ is a theta-characteristic on $C$.
Let $B_0\subseteq \SS_{g,n}$ be the closure of the locus of spin curves
$$\bigl[C\cup_{\{y, q\}} E, p_1, \ldots, p_n, \  \ \vartheta_C\in \sqrt{\omega_C}, \ \vartheta_E=\OO_E(1)\bigr]\in \SS_{g,n}.$$ 

\begin{remark} One has an isomorphism $B_0\cong \SS_{g-1:[2]}$  valid for both components of $\SS_g$.
\end{remark} 

If $\alpha_0:=[A_0], \beta_0:=[B_0]\in CH^1(\SS_{g,n})$,  then
$\pi^*\bigl(\delta_{\mathrm{irr}}\bigr)=\alpha_0+2\beta_0$. In particular, $B_0$ is the ramification divisor of the map $\pi$. Coupled with the formula \cite{Log} for $K_{\mm_{g,n}}$, this yields 
\begin{equation}\label{eq:can_class}
K_{\SS_{g,n}}=13\lambda-2\alpha_0-3\beta_0+\sum_{i=1}^n \psi_i-2\sum_{i, S\subseteq [n]} \bigl(\alpha_{i:S}+\beta_{i:S}\bigr)-\alpha_{1:\emptyset}-\beta_{1:\emptyset} \in CH^1(\SS_{g,n}).
\end{equation}

\subsection{Extension of pluricanonical forms} The Kodaira dimension of $\SS_{g,n}$, being an invariant of the coarse moduli space $\SS_{g,n}$ rather than of the stack, is defined by passing to a resolution of singularities $\epsilon\colon \widehat{\mathcal{S}}_{g,n}\rightarrow \SS_{g,n}$. For a $\mathbb Q$-factorial normal projective variety $X$, we denote by $\kappa(X)$ its Kodaira dimension and by $\kappa(X,K_X)$ the Kodaira--Iitaka dimension of its canonical bundle. In order to work directly on the space $\SS_{g,n}$ whose geometry we can control, we need to know that pluricanonical forms on $\SS_{g,n}$ extend to any resolution, therefore $\kappa(\SS_{g,n})=\kappa(\SS_{g,n}, K_{\SS_{g,n}})$. Such a result has been established for $\mm_g$ in \cite{HM}, for $\SS_g$ in \cite{Lu}, and for $\mm_{g,n}$ in \cite{Log}.

\begin{proposition}\label{prop:extension_canform} We fix $g\geq 4$ and $n\geq 0$. Then for any $\ell\geq 0$ one has an isomorphism 
$$\epsilon^*\colon H^0\bigl(\SS_{g,n}, K_{\SS_{g,n}}^{\otimes \ell}\bigr) \stackrel{\cong}\longrightarrow H^0\bigl(\widehat{\cS}_{g,n}, K_{\widehat{\cS}_{g,n}}^{\otimes \ell}\bigr).$$
\end{proposition}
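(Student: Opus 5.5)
The plan is to follow the Harris--Mumford / Reid--Tai strategy, as adapted by Ludwig for $\SS_g$ and by Logan for $\mm_{g,n}$. Since pluricanonical forms always extend across loci of codimension at least two in the smooth part, it suffices to control the singularities of the coarse space $\SS_{g,n}$. These are finite quotient singularities: locally at a point $[X,p_1,\ldots,p_n,\vartheta,\beta]$, the space $\SS_{g,n}$ is $\mathbb{C}^{3g-3+n}/\mathrm{Aut}(X,p_1,\ldots,p_n,\vartheta,\beta)$, acting on the versal deformation space of the spin curve. We will split the singular locus into two parts:
\begin{enumerate}
\item the non-canonical singularities, i.e.\ points where some automorphism has Reid--Tai age $<1$;
\item the remaining canonical singularities, where extension is automatic by the Reid--Tai criterion.
\end{enumerate}

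First we analyse the deformation space. The versal deformation space of a pointed spin curve maps to that of the underlying pointed stable curve $\mathrm{st}(X)$. The map is an isomorphism in the directions other than those smoothing nodes $e$ of type $B_0$, i.e.\ nodes where $X$ has an exceptional component. In those directions the local parameter $\tau_e$ on the spin side satisfies $\tau_e^2=t_e$. Automorphisms of the spin curve include the involutions acting by $-1$ on exceptional components, and these are quasi-reflections. We will quotient by the subgroup generated by them first; the resulting space is smooth and the residual group acts on it. Then we apply the Reid--Tai criterion to each element $\sigma$ of the residual group. Ludwig's analysis in \cite{Lu} for $n=0$ is exactly this computation. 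The key observation is that adding marked points only shrinks the automorphism group, since automorphisms must now fix the points. It also adds deformation directions, namely the moving marked points together with the smoothings of new nodes. Each of these contributes nonnegatively to the age.

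The main steps are therefore as follows. First, for $g\ge4$, we show that any automorphism $\sigma$ of a pointed spin curve whose induced automorphism of the stable model $\mathrm{st}(X)$ is \emph{not} an elliptic-tail involution satisfies $\mathrm{age}(\sigma)\ge1$. We do this by reducing to Ludwig's estimate on the $H^1(T)$-part: forgetting the points maps the pointed spin curve to its stabilisation, and the eigenvalue contributions of $\sigma$ on $\mathrm{Ext}^1(\Omega_X(\sum p_i),\OO_X)$ contain those computed in \cite{Lu}, up to the contributions from contracted rational tails, which are handled as in \cite{Log}. Second, we treat the elliptic-tail involutions, which give non-canonical singularities. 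Following \cite{HM}, \cite{Lu} and \cite{Log}, these occur along loci where $X$ has an elliptic tail with $j=0$ or $1728$ carrying no marked points. For these we will show directly that pluricanonical forms extend near the generic point of such a locus. We use the local description of Harris--Mumford: the locus lies in the codimension-one boundary $A_{1:\emptyset}\cup B_{1:\emptyset}$. Near its generic point, the map from a partial resolution is controlled by the tail parameter, which reduces to the $\mm_{1,1}$ computation exactly as in \cite[Theorem 1]{HM} and Ludwig's spin version. Marked points lie off the tail, so they do not affect the computation. Third, we conclude via the standard argument that the non-canonical locus has codimension at least two away from those elliptic-tail loci, and that forms extend across the elliptic-tail loci by the previous step.

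The main obstacle will be the first step. We must verify that the extra data of the marked points and of the theta characteristic $\vartheta$ on exceptional components does not create new automorphisms of age $<1$. In particular, we must handle automorphisms acting trivially on $\mathrm{st}(X)$ but nontrivially on the spin structure. Here I would follow Ludwig's case analysis of such automorphisms, namely those with $\sigma$ equal to the identity on non-exceptional components. I would check that, after quotienting by quasi-reflections, these act trivially or with age $\ge1$. Pointedness only removes automorphisms, so Ludwig's classification applies verbatim.
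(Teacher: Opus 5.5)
Your outline is essentially the paper's proof: the Reid--Tai criterion, then the observation that marked points only shrink the automorphism group and add deformation directions, so that $\mathrm{age}(\phi_{\mathrm{up}})\le \mathrm{age}(\phi)$. This reduces everything to Ludwig's classification, which in fact confines the bad locus to $j=0$ tails with $\vartheta_{J_0}\cong\OO_{J_0}$, so your $j=1728$ is unnecessary but harmless; a tail carrying a marked point is canonical, which the paper takes from Logan. The paper then applies the Harris--Mumford/Ludwig argument to tails with no marked points, as you do.

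The one point to state more carefully is that this last step is not a local computation near a generic point. The singularity there is genuinely non-canonical, and what Harris--Mumford and Ludwig show is that every \emph{global} pluricanonical form satisfies the adjunction conditions at every point of the elliptic-tail locus; extending near generic points and then appealing to codimension two, as your third step suggests, would not suffice across non-canonical singularities.
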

\begin{proof}
We explain the local description of the map $\pi\colon \SS_{g,n}\rightarrow \mm_{g,n}$ around a spin curve $[X, p_1, \ldots, p_n, \vartheta, \beta]$. We denote by $E_1, \ldots, E_r$ the exceptional components of $X$, by $\nu\colon X\rightarrow C$ the map contracting $E_1, \ldots, E_r$ and we set $q_i:=\nu(E_i)\in C_{\mathrm{sing}}$, for $i=1,\ldots, r$. Let $\mathbb C^{3g-3+n}_{\tau}$ be the versal deformation space of $(X,p_1, \ldots, p_n, \vartheta, \beta)$, where the coordinates $(\tau_1, \ldots, \tau_{3g-3+n})$ are chosen in such a way that for $1\leq i\leq r$ the locus $(\tau_i=0)$ corresponds to those deformations which preserve the components $E_i$. Then $\mathbb C^{3g-3+n}_t\cong \mbox{Ext}^1\bigl(\Omega_C(x_1+\cdots+x_n),\OO_C\bigr)$ is the versal defomation space of $[C,\nu(p_1), \ldots, \nu(p_n)]$, where the coordinates $(t_1, \ldots, t_{3g-3+n})$ are chosen in such a way that for $1\leq i\leq r$ the divisor $(t_i=0)\subseteq \mathbb C^{3g-3+n}_t$ corresponds to the deformation failing to smooth the node $q_i$. 

\vskip 3pt

The morphism $\pi\colon \SS_{g,n}\rightarrow \mm_{g,n}$ is locally given by the following map
$$\frac{\mathbb C^{3g-3+n}}{\mbox{Aut}(X,p_1,\ldots, p_n, \vartheta, \beta)}\longrightarrow \frac{\mathbb C^{3g-3+n}}{\mbox{Aut}\bigl(C, \nu(p_1), \ldots, \nu(p_n)\bigr)}, \ \ \ t_i=\tau_i^2, \ i\leq r, \ \ \ t_i=\tau_i, i\geq r+1.$$
The singularities of the quotient $\mathbb C^{3g-3+n}_{\tau}/\mbox{Aut}(X,p_1,\ldots, p_n,\vartheta)$ are studied via the \emph{Reid-Tau criterion} \cite[p.27]{HM}. 
For an automorphism $\phi\in \mbox{Aut}(X,p_1,\ldots,p_n,\vartheta, \beta)$ having order $m$, we define its \emph{age}  as the quantity 
$\mbox{age}(\phi)=\frac{a_1}{m}+\cdots+\frac{a_{3g-3+n}}{m}$,
where $\zeta^{a_1}, \ldots,  \zeta^{a_{3g-3+n}}$ are the eigenvalues of $\phi$, with $\zeta=e^{\frac{2\pi i}{m}}$. An automorphism $\phi$ with $\mbox{age}(\phi)\geq 1$ leads to a canonical singularity. Every automorphism of $\phi\in \mbox{Aut}(X,p_1, \ldots, p_n, \vartheta, \beta)$ induces an automorphism $\phi_{\mathrm{up}}$ of the \emph{unpointed} spin curve $(X, \vartheta, \beta)$. Clearly, one has $\mbox{age}(\phi_{\mathrm{up}})\leq \mbox{age}(\phi)$. Note also that contracting a rational curve under the stabilization map  $\nu\colon X\rightarrow C$ does not change the age of the automorphism.  We now use \cite[Theorem 3.1]{Lu} stating that if $(X, \vartheta, \beta)$ admits an automorphism $\phi$ with $\mbox{age}(\phi)<1$, then necessarily $C$ has an elliptic tail $J_0$ with $j$-invariant  $j(J_0)=0$ and that $\vartheta_{J_0}\cong \OO_{J_0}$ and $\phi$ acts non-trivially on $J_0$. Therefore the same conclusion applies for $\phi\in \mbox{Aut}(X, p_1, \ldots, p_n, \vartheta, \beta)$.

\vskip 3pt

We are left with the exceptional case singled out in \cite{Lu}, that is, when $X$ has an ellipic tail $J_0$ as above. If one of the marked points $p_i$ lies on $J_0$, then the resulting singularity is canonical, cf. \cite[Theorem 2.5]{Log}. If $p_1, \ldots, p_n\in X\setminus J_0$, the argument in \cite[Theorem 4.1]{Lu} or in \cite[p.43-44]{HM}  applies mutatis mutandis to show that the non-canonical singularity of $\SS_{g,n}$ induced by $\phi$ does not impose global adjunction conditions, that is, every pluricanonical form on $\SS_{g,n}$ can be lifted to a resolution.
\end{proof}

\begin{remark}\label{rmk:extgen1} A result analogous to Proposition \ref{prop:extension_canform} has been established in \cite[Theorem 5.61]{Kr} for the space $\SS_{1,n}^+$. Probably the analysis in Proposition \ref{prop:extension_canform} can be extended to cover the case $g=2,3$ as well.
\end{remark}

\subsection{The universal odd theta-characteristic} We now compute the class of an important effective divisor on $\SS_{g,1}^-$, namely the closure $\th1$ of the locus $$\Theta_{g,1}^-:=\Bigl\{[C,p,\vartheta]\in \cS_{g,1}^-: H^0(C, \vartheta(-p))\neq 0\Bigr\}$$
of points in the supports of odd theta-characteristics. The forgetful map $\Theta_{g.1}^-\rightarrow \cS_g^-$ is a generically finite cover branched along the divisor $Z_g$ considered in \cite{FV2} and consisting of odd spin curves $[C, \vartheta]\in \cS_g^-$ such that $\vartheta$ is non-reduced.

\begin{remark}\label{rmk:irr}
The divisor $\Theta_{g,1}^-$ is intimately related to the stratum of abelian differentials 
$$\H_g\bigl(2^{g-1}\bigr):=\Bigl\{[C,p_1, \ldots, p_{g-1}]\in \cM_{g,2g-2}: \omega_C=\OO_C\bigl(2p_1+\cdots+2p_{g-1}\bigl)\Bigr\}.$$
There is a dominant forgetful morphism $\H_g\bigl(2^{g-1}\bigr)\rightarrow \Theta_{g,1}^-$. Since $\H_g\bigl(2^{g-1}\bigr)$ is irreducible, see \cite{KZ}, it follows that $\Theta_{g,1}^-$ is also irreducible for $g\geq 2$.
\end{remark}

\begin{theorem}\label{thm:univ_theta}
The following formula holds for $g\geq 2$:
$$\bigl[\th1]=\frac{\lambda}{4}+\frac{\psi}{2}-\frac{\alpha_0}{16}-\frac{1}{2}\sum_{i=1}^{g-1}\alpha_{i:\emptyset}\in CH^1(\SS_{g,1}^-).$$
\end{theorem}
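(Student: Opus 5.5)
One natural approach realizes $\th1$ as a degeneracy locus and computes its class by Grothendieck--Riemann--Roch, in the style of the computation of $Z_g$ in \cite{FV2}. The point $p$ lies in the support of $\vartheta$ precisely when the evaluation map $H^0(C,\vartheta)\rightarrow \vartheta_{|p}$ vanishes. On the open part of $\SS_{g,1}^-$ where $h^0(C,\vartheta)=1$, which holds away from codimension $\geq 2$ on $\cS_{g,1}^-$ and generically along each boundary divisor, let $f\colon \mathcal{C}\rightarrow \SS_{g,1}^-$ be the universal curve with section $\sigma$. Let $\mathcal{L}$ be the universal spin bundle. Then $\mathbb{E}:=f_*\mathcal{L}$ is a line bundle, and $\th1$ is the zero locus of the map of line bundles $\mathbb{E}\rightarrow \sigma^*\mathcal{L}$. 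Hence
$$[\th1]=c_1(\sigma^*\mathcal{L})-c_1(f_*\mathcal{L}),$$
possibly corrected by boundary divisors along which this map vanishes identically.

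The two Chern classes are computed separately. Since $\mathcal{L}^{\otimes 2}\cong\omega_f$ away from exceptional components, and $\sigma^*\omega_f=\psi$, the first term is $\psi/2$. This is exact over the locus where $p$ does not lie on an exceptional component, which suffices in codimension one.

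For $c_1(f_*\mathcal{L})$ we use $R^1f_*\mathcal{L}\cong (f_*\mathcal{L})^{\vee}$ by Serre duality, so $c_1(f_*\mathcal{L})=\frac{1}{2}c_1(f_!\mathcal{L})$. This is exactly the Hodge-type class of odd spin structures computed in \cite{FV2} (pulled back from $\SS_g^-$, pointed curves contributing nothing further), namely $-\lambda/4+\alpha_0/16$ up to the $\alpha_{i}$, $\beta_i$ terms. GRR with the Mumford-type formula gives $c_1(f_!\mathcal{L})=\lambda/2-\alpha_0/8+\ldots$ (compare the classical $-\lambda/4+\delta_0/16$ behaviour on spin moduli). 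Substituting produces the $\lambda/4$, $\psi/2$ and $-\alpha_0/16$ coefficients, and shows that $\beta_0$ has coefficient zero.

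The boundary divisors of type $\Delta_{i:S}$ are handled next, and this is where I expect the main obstacle. For a curve $C\cup_y D$ with $p\in D$, on the generic point of each divisor we decide whether the section of $\vartheta$ vanishes identically on the component containing $p$, i.e.\ which component carries the odd theta-characteristic. If the odd part lies on the component not containing $p$, then the unique section lives there and vanishes on the $p$-component. This happens for $\alpha_{i:\emptyset}$: the odd $\vartheta_C$ of genus $i$ lies on the component without the marked point. The evaluation map is then identically zero, forcing an excess term. Its multiplicity I would determine via a test curve: fix a general odd spin curve of genus $i$ and a general even one of genus $g-i$ carrying $p$, and let the attaching point vary. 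Alternatively, apply the twisting $\mathcal{L}(-\text{component})$ along the divisor, which gives the coefficient $-\tfrac12$ (from the half-twist compatible with $\mathcal{L}^2=\omega$). On $B_{i:S}$ with $p$ on the odd side, the limit is a genuine support-point condition and contributes nothing.

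To pin down the coefficients safely, I would instead write $[\th1]=a\lambda+c\psi-b_0\alpha_0-b_0'\beta_0-\sum(a_i\alpha_{i:\emptyset}+b_i\beta_{i:\emptyset})$. Then I would verify it with standard test curves: a pencil on a fixed odd spin curve with moving $p$ (giving $\deg=g-1$ and $c(2g-2)=g-1$, so $c=1/2$), together with the genus-$2$ or pushforward check $f_*[\th1]=(g-1)$ via $\pi_*\psi=2g-2$. Finally I would use the elliptic-tail and $\Delta_0$ pencils to confirm $b_0=1/16$, $b_0'=0$, $a_i=1/2$, $b_i=0$.
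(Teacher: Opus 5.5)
Your route differs from the paper's. The paper never treats $\overline{\Theta}_{g,1}^-$ as a degeneracy locus. It expands the class in the standard basis and gets $\bar{\alpha}_{i:\emptyset}=\frac12$ and $\bar{\alpha}_{g-i:1}=0$ for $i\geq 2$ from the attaching curves $G_i$, $F_i$ by limit linear series. It gets two relations from the elliptic pencils $F_0,G_0$ and one from pulling back to $\mm_{1,2}$. It then imports $\bar{\lambda}=\frac14$ from \cite[Proposition 4.3]{F3}, and the $\delta_{\mathrm{irr}}$-coefficient of $\pi_*[\overline{\Theta}_{g,1}^-]$ from \cite[Theorem 0.3]{F3} to force $\bar{\beta}_0=0$. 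Writing $[\overline{\Theta}_{g,1}^-]=c_1(\sigma^*\mathcal{L})-c_1(f_*\mathcal{L})-(\text{excess})$ and applying Grothendieck--Riemann--Roch makes the computation self-contained. Done carefully, it fixes every coefficient, including the compact-type ones, without test curves or \cite{F3}. As written, however, three steps do not go through.

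\emph{Sign.} For $c_1(\mathcal{L})=\frac12K$, GRR gives $c_1(f_!\mathcal{L})=f_*\bigl(-\frac{1}{24}K^2+\frac{1}{12}[\mathrm{Sing}]\bigr)=-\frac{\lambda}{2}+\frac{\delta}{8}$, not $\frac{\lambda}{2}-\frac{\alpha_0}{8}$. Your value contradicts the correct $c_1(f_*\mathcal{L})=-\frac{\lambda}{4}+\frac{\alpha_0}{16}$ that you also quote, and it would reverse the signs of the $\lambda$ and $\alpha_0$ coefficients.

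\emph{The $\beta_0$ coefficient.} $\bar{\beta}_0=0$ does not follow from GRR with $\mathcal{L}^{\otimes 2}\cong\omega_f$. Since the non-separating part of $f_*[\mathrm{Sing}]$ is $\pi^*\delta_{\mathrm{irr}}=\alpha_0+2\beta_0$, that naive version produces a spurious $-\frac{\beta_0}{8}$. What removes it is that $\beta\colon\mathcal{L}^{\otimes 2}\to\omega_f$ vanishes along the divisor $\mathcal{E}$ swept out by exceptional components. Hence $\mathcal{L}^{\otimes 2}\cong\omega_f(-\mathcal{E})$ with $K\cdot\mathcal{E}=0$, and
\[
c_1(f_!\mathcal{L})=-\frac{\lambda}{2}+\frac18\Bigl(f_*[\mathrm{Sing}]+f_*(\mathcal{E}^2)\Bigr).
\]
Over $B_0$ the local chart is $t=\tau^2$, there are two nodes $xy=\tau$ on either side of $E$, and $E^2=-2$. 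So the two terms contribute $2\beta_0$ and $-2\beta_0$ and cancel. This is exactly what your ``$\ldots$'' hides.

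\emph{Compact-type coefficients.} These are not established. You leave $c_1(f_*\mathcal{L})$ undetermined ``up to the $\alpha_i,\beta_i$ terms'', so saying $B_{i:\emptyset}$ ``contributes nothing'' only controls the excess. The elliptic pencils only see $i=1$, so $b_i=0$ for $2\leq i\leq g-1$ is unproved.

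The fix is the same local computation. Every compact-type node of a spin curve carries an exceptional component with the same local picture. So the $\alpha_i,\beta_i$ parts of $f_*[\mathrm{Sing}]$ and $f_*(\mathcal{E}^2)$ cancel, and $c_1(f_*\mathcal{L})=-\frac{\lambda}{4}+\frac{\alpha_0}{16}$ exactly in codimension one. The only compact-type term is then the excess along $A_{i:\emptyset}$.

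Replace the ``half-twist'' by an order computation. The generator $s$ of $f_*\mathcal{L}$ restricts to $(s_C,\ell_E,0)$ on $C\cup E\cup D$, so it vanishes along the component $\mathcal{D}$. It vanishes to order exactly one, since $\mathcal{L}(-2\mathcal{D})_{|E}\cong\OO_E(-1)$ would force $s_C(y)=0$. Hence $\mathrm{ev}_p$ vanishes to order one in $\tau$. That is multiplicity $\frac12$ with respect to $\alpha_{i:\emptyset}$, which is normalized in $t=\tau^2$; this is why $G_i\cdot\alpha_{i:\emptyset}=2-2i$. Twisting $\mathcal{L}$ by a component, as you suggest, does not give a spin bundle, since it changes the degree on $E$.

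Alternatively, your test curve is exactly the paper's $G_i$. But then you also need the paper's $F_i$ for the coefficients of $\beta_{i:\emptyset}=\alpha_{g-i:1}$. You also need the limit linear series statements $G_i\cdot\overline{\Theta}_{g,1}^-=i-1$ (each point with multiplicity one) and $F_i\cap\overline{\Theta}_{g,1}^-=\emptyset$, which your proposal does not supply.
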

\begin{proof}
We expand the class of the divisor $\th1$ in the standard basis of $CH^1(\SS_{g,1}^-)$
\begin{equation}\label{eq:expansion}
\bigl[\th1\bigr]=\bar{\lambda} \cdot \lambda+\bar{\psi} \cdot \psi-\bar{\alpha}_0\cdot \alpha_{0}-\bar{\beta}_0\cdot \beta_0-\sum_{i=1}^{g-1}\bigl(\bar{\alpha}_{i:1}\cdot \alpha_{i:1}+\bar{\alpha}_{i:\emptyset}\cdot \alpha_{i:\emptyset}\bigr) \in CH^1(\SS_{g,1}^-),
\end{equation}
and determine the coefficients by intersecting both sides of the equality (\ref{eq:expansion}) with standard test curves lying in the boundary of $\SS_{g,1}^-$.

\vskip 3pt
For $1\leq i\leq g-1$, we fix general spin curves $[C, \vartheta_C^{+}]\in \cS_{i}^+$ and $[D, q, y, \vartheta_D^-]\in \cS_{g-i,2}^-$ and we form the test curve
$$F_i:=\Bigl\{[C\cup_y D, q\in D, \vartheta_C^{+}, \ \vartheta_D^{-}]:y\in C\Bigr\}\subseteq \SS_{g,1}^-.$$
Note that $F_i\cdot \alpha_{g-i:1}=-\mbox{deg}(\omega_C)=2-2i$ and that $F_i$ has intersection number zero with all remaining generators of $CH^1(\SS_{g,1}^-)$. Then we fix general spin curves $[C, \vartheta_C^{-}]\in \cS_{i}^-$ and $[D, q, y, \vartheta_D^{+}]\in \cS_{g-i,2}^+$ and consider the following test curve 
$$G_i:=\Bigl\{[C\cup_y D, q\in D, \vartheta_C^{-}, \ \vartheta_D^{+}]:y\in C\Bigr\}\subseteq \SS_{g,1}^-.$$
Then $G_i\cdot \alpha_{i:\emptyset}=2-2i$ and the other intersections with the generators of $CH^1(\SS_{g,1}^-)$ equal zero.

\vskip 3pt

We claim that $F_i$ is disjoint from $\th1$, in particular $\bar{\alpha}_{g-i:1}=0$, for $i=2, \ldots, g-1$. Indeed, assume  there exists a point $t\in \th1\cap F_i$ corresponding to a point $y\in C$. Then there exists a pair of non-zero sections $(\sigma_C, \sigma_D)$ corresponding to a limit theta-characteristic on $C\cup_y D$ such that $\sigma_D(q)=0$. We may assume that $h^0(D, \vartheta_D^-)=1$ and that neither $y$ nor $q$ are zeroes of the unique section in $H^0(D, \vartheta_D^-)$. One has 
$\sigma_C\in H^0\bigl(C, \vartheta_C^+((g-i)\cdot y)\bigr)$, $\sigma_D\in H^0\bigl(D, \vartheta_D^-(i\cdot y)\bigr)$ and, by the definition of a limit linear series, that $\mbox{ord}_y(\sigma_C)+\mbox{ord}_y(\sigma_D)\geq g-1$. Since $\sigma_D(q)=0$, it follows that $\mbox{ord}_y(\sigma_D)\leq i-1$, therefore $\mbox{ord}_y(\sigma_C)\geq g-i$, from which it follows that $H^0(C, \vartheta_C^+)\neq 0$, which contradicts the generality of $[C, \vartheta_C^+]\in \cS_{i,1}^+$ . This proves that $F_i\cdot \th1=0$.

\vskip 4pt

Next we claim that $G_i\cdot \th1=i-1$, therefore $\bar{\alpha}_{i:\emptyset}=\frac{1}{2}$, for $i=2, \ldots, g-1$. Let $t\in \th1\cap G_i$ be a point corresponding to a point $y\in C$. Then we have  $\sigma_C\in H^0\bigl(C, \vartheta_C^-((g-i)\cdot y\bigr))$ and $\sigma_D\in H^0\bigl(D,\vartheta_D^+(i\cdot y)\bigr)$ and, again, $\mbox{ord}_y(\sigma_C)+\mbox{ord}_y(\sigma_D)\geq g-1$. Since, by generality,  we may assume  $H^0\bigl(D, \vartheta_D(y-q)\bigr)=0$, it follows that $\mbox{ord}_y(\sigma_D)\leq i-2$, therefore $\mbox{ord}_y(\sigma_C)\geq g-i+1$, that is, $y\in \mbox{supp}(\eta_C^-)$. This yields $i-1$ points for the intersection $G_i\cdot \th1$, each corresponding to the points in the support of $\vartheta_C^-$. To see that each of them counts with multiplicity one, we reason along the lines of \cite[Proposition 5.1]{FV2}.

\vskip 4pt

Now we argue that $\th1$ is disjoint from two elliptic pencils in the boundary of $\SS_{g,1}^-$. Let $f\colon \mbox{Bl}_9(\PP^2)\rightarrow \PP^1$ be the family of elliptic curves induced by blowing-up a pencil of plane cubics at its $9$ base points and denote by $\tau\colon \PP^1\rightarrow \mbox{Bl}_9(\PP^2)$ a section corresponding to one of the base points of the pencil. We fix a general spin curve $[C, y, q, \vartheta_C^{+}]\in \cS_{g-1,2}^+$ and set
$$F_0:=\Bigl\{\bigl[C\cup_{y\sim \tau(t)} f^{-1}(t),\
 \ \vartheta_C^{+},\ \
\vartheta_{f^{-1}(t)}=\OO_{f^{-1}(t)}\bigr]: t\in
\PP^1\Bigr\}\subseteq \SS_{g,1}^-.$$ We find $F_0\cdot \alpha_{1:\emptyset}=-1$ and
$F_0\cdot \lambda=1$. For each of
the $12$ points $t_{\infty}\in \PP^1$ corresponding to
singular fibres of $f$, the spin structure is locally free on $C \cup
f^{-1}(t_{\infty})$, therefore we obtain a point lying in the divisor $A_0$. We therefore conclude that $F_0\cdot \beta_0=0$ and, accordingly, $F_0\cdot \alpha_0=12$.

\vskip 3pt

A second elliptic pencil is obtained by choosing a general element $[C, y, q, \vartheta_C^-]\in \cS_{g-1,2}^-$. On $f^{-1}(t)$ one takes each of the three even theta-characteristics. This induces the family
$$G_0:=\Bigl\{\bigl[C\cup_{y\sim \tau(t)} f^{-1}(t), \ \vartheta_C^{-},
\  , \vartheta_{f^-1(t)}^{+}\in \gamma^{-1}
[f^{-1}(t)]\bigr]: t\in \PP^1\Bigr\}\subseteq \SS_{1,1}^-,$$ with $\gamma\colon \SS_{1, 1}^+\rightarrow \mm_{1, 1}$ being the degree $3$ map forgetting the spin structure. We obtain $G_0\cdot \lambda=3$ and
$G_0\cdot \beta_{1:\emptyset}=-3$.  The map $\gamma$ being simply ramified over the point
corresponding to $j$-invariant $\infty$, we conclude that $G_0\cdot
\alpha_0=12$ and therefore  $G_0\cdot \beta_0=12$.

\vskip 3pt

It is a simple exercise in limit linear series to show that $\th1$ is disjoint from both $F_0$ and $G_0$. Accordingly, we obtain the following relations 
\begin{equation}\label{eq:ellipticpencil}
\bar{\lambda}-12\bar{\alpha}_0+\bar{\alpha}_{1:\emptyset}=0 \ \ \mbox{ and } \ \ \ \bar{\lambda}-4\bar{\alpha}_0-4\bar{\beta}_0+\bar{\beta}_{1:\emptyset}=0.
\end{equation}

\vskip 6pt  

Next we fix a general pointed even spin curve $[C,y,\vartheta_{C}^+]\in \cS_{g-1,1}^+$ and consider the map $\nu\colon \mm_{1,2}\rightarrow \SS_{g,1}^-$ given by $[J, y,q]\mapsto \bigl[J\cup_{y} C, q, \vartheta_J^{-}=\OO_J, \ \vartheta_C^+\bigr]\in \SS_{g,1}^-$, where $J$ denotes an elliptic curve. The following relations are immediate:
$$\nu^*(\lambda)=\lambda, \ \nu^*(\psi)=\psi_q, \nu^*(\alpha_0)=\delta_{\mathrm{irr}}, \ \nu^*(\beta_0)=0, \ \nu^*(\alpha_{1:1})=-\psi_x, \ \ \nu^*(\alpha_{1:\emptyset})=\delta_{0:yq}.$$ Again, it is an easy exercise to show that $\nu^*(\th1)=0$.
Inside $CH^1(\mm_{1,2})\cong \mathbb Q\langle \psi_y, \delta_{0:yq}\rangle$ one has the following well-known relations \cite[Proposition 1.9]{AC}:
$$\psi_y=\psi_q, \ \lambda=\psi_y-\delta_{0:yq}, \ \ \delta_{\mathrm{irr}}=12(\psi_y-\delta_{0: yq}).$$
We obtain that $0=\bigl[\nu^*(\th1)\bigr]=\bigl(\bar{\lambda}+\bar{\psi}-12\bar{\alpha}_0\bigr)\cdot \psi_y-\bigl(\bar{\lambda}-12\bar{\alpha}_0+\bar{\alpha}_{1:\emptyset}\bigr)\cdot\delta_{0:yq}\in CH^1(\mm_{1,2})$. It is a consequence of \cite[Proposition 4.3]{F3} that $\bar{\alpha}=\frac{1}{4}$, while obviously   $\bar{\psi}=\frac{1}{2}$. It follows that $\bar{\alpha}_0=\frac{1}{12}\bigl(\frac{1}{2}+\frac{1}{4}\bigr)=\frac{1}{16}$. It then follows from the first relation in (\ref{eq:ellipticpencil}) that $\bar{\alpha}_{1:\emptyset}=\frac{1}{2}$. 

\vskip 4pt

We are now in a position to determine the coefficient $\bar{\beta}_0$ and to that end, we use that in \cite[Theorem 0.3]{F3} the class of the pushforward $\pi_*\bigl([\th1]\bigr)\in CH^1(\mm_{g,1})$ has been  determined. In particular, its $\delta_{\mathrm{irr}}$-coefficient is equal to $-2^{2g-6}$. From the set-theoretic description of both divisors $\alpha_0$ and $\beta_0$, clearly
$\pi_*(\alpha_0)=2^{2g-2}\delta_{\mathrm{irr}}$ and $\pi_*(\beta_0)=2^{g-2}(2^{g-1}-1)\delta_{\mathrm{irr}}$, therefore 
$2^{2g-2}\bar{\alpha}_0+2^{g-2}(2^{g-1}-1)\bar{\beta}=2^{2g-6}$. Since $\bar{\alpha}_0=\frac{1}{16}$, we obtain $\bar{\beta}_0=0$. Finally, from the second equation in (\ref{eq:ellipticpencil}), we obtain $\beta_{1:\emptyset}=0$. This completes the proof. 
\end{proof}  

\subsection{The uniruledness of $\th1$ for small $g$} In the range when the general curve of genus $g$ lies on a $K3$ surface, we can show that the divisor $\th1$ is uniruled. In fact, we have a more precise result which will be used when determining the Kodaira dimension of $\SS_{g,1}^-$:

\begin{theorem}\label{thm:uniruledK3}
For $g\leq 9$ or $g=11$, the universal theta-characteristic $\th1$ is uniruled. Precisely through a general point of $\th1$ there passes a rational curve $\Gamma\subseteq \th1$, such that $\Gamma\cdot \th1=0$.
\end{theorem}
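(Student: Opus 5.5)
Given a general point $[C,\vartheta,p]\in\th1$ with $g\leq 9$ or $g=11$, we use the fact that a general curve of such genus lies on a K3 surface. We then move $C$ in a pencil on that K3 surface and carry the odd theta-characteristic and the marked point along with it. Write $\vartheta=\OO_C(p+D)$ with $D$ effective of degree $g-2$. For $g\leq 9$ or $g=11$, Mukai's results give a K3 surface $S$ of genus $g$ with $C\in|\OO_S(C)|$ and $\mathrm{Pic}(S)$ general enough. The odd spin curve in question corresponds, via the stratum $\H_g(2^{g-1})$ of Remark \ref{rmk:irr}, to a canonical divisor $2(p+D)$. A hyperplane section of $S\subseteq \PP^g$ cutting $C$ in $2(p+D)$ is a curve $C'\in|\OO_S(C)|$ that is tangent to $C$ at all $g-1$ points of $\mathrm{supp}(\vartheta)$. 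The natural pencil is therefore
$$P:=\langle C, C'\rangle\subseteq |\OO_S(C)|,$$
whose base locus is $C\cdot C'=2(p+t_1+\cdots+t_{g-2})$. Every member $C_\lambda$ of $P$ satisfies $C_\lambda\cdot C'=2(p+D)$, so $\OO_{C_\lambda}(p+D)$ is a theta-characteristic on $C_\lambda$ with $p+D$ in its support. For a general member it is odd, by deformation invariance of the parity. This gives a rational curve $\Gamma:=\{[C_\lambda,\OO_{C_\lambda}(p+D),p]\}_{\lambda\in P}\subseteq\th1$ passing through the given point. To make the count below work, one has to verify that the parity is constant along $P$ and that the base points impose the expected conditions, i.e.\ that $P$ is a genuine pencil.

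Next we compute $\Gamma\cdot\th1$ using Theorem \ref{thm:univ_theta}, which expresses $[\th1]$ in terms of $\lambda,\psi,\alpha_0,\alpha_{i:\emptyset}$. For a Lefschetz-type pencil on a K3 surface one has the standard values $\Gamma\cdot\lambda=g+1$ and $\Gamma\cdot\delta_{\mathrm{irr}}=6g+18$, while $\Gamma\cdot\delta_{i}=0$ for $i\geq 1$. The new ingredients are the following.

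(i) The $\psi$-degree. Since $p$ is a base point of $P$ with multiplicity $2$ in the scheme-theoretic base locus, one must blow up $S$ twice over $p$. The section given by $p$ is then the second exceptional curve $E_2$, of self-intersection $-1$. This gives $\Gamma\cdot\psi=1$, or more carefully $2$ if the section is $E_1$; this is exactly the point to be checked.

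(ii) The split of $\pi^*\delta_{\mathrm{irr}}=\alpha_0+2\beta_0$ over the $6g+18$ nodal fibres. At a nodal fibre the spin structure is the restriction of $\OO(p+D)$, hence a line bundle. So every such fibre contributes to $A_0$, giving $\Gamma\cdot\alpha_0=6g+18$ and $\Gamma\cdot\beta_0=0$, unless the node lies at a base point.

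(iii) The tangency at the base points. The doubled base points produce, after blowing up, fibres containing the $(-2)$-curves $E_1$. These are exceptional-type components: the curves $E_1$ give extra nodal fibres (contributing to $\alpha_0$ or $\beta_0$, depending on how the degree of $\vartheta$ distributes) or elliptic-tail-type degenerations, and each must be tracked.

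Plugging in, we aim to verify
$$\frac{g+1}{4}+\frac{\Gamma\cdot\psi}{2}-\frac{\Gamma\cdot\alpha_0}{16}-\frac12\sum_i\Gamma\cdot\alpha_{i:\emptyset}=0.$$
The naive values do not obviously give zero; with $\psi=1$ and $\alpha_0=6g+18$ the left side is nonzero. So the genuine content lies in the corrections of (i) and (iii): the contributions of the fibres over the $g-1$ tangency points, each containing a $(-2)$-curve through which the spin structure may become non-locally-free and hence land in $B_0$ rather than $A_0$.

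The main obstacle is precisely this local analysis at the $g-1$ base points of $P$. The approach is to pass to the blow-up $\widetilde S\to S$ at the length-$2$ schemes and identify, for each of the $g-1$ special fibres, whether it lies in $A_0$, $B_0$ or some $A_{i:\emptyset}$, and with which multiplicity. The first thing to try is the local model: two curves tangent at a point, blown up twice. The resulting fibre containing $E_1$ is $\widetilde{C}_\lambda\cup E_1$ attached at two points, which is an exceptional component carrying $\OO_{E_1}(1)$ and so lies in $B_0$. Each such fibre then contributes to $\beta_0$, whose coefficient in $[\th1]$ is $0$ by Theorem \ref{thm:univ_theta}. In that case only $\lambda$, $\psi$ and the remaining $\alpha_0$ contributions matter, and the calculation should be rebalanced until it gives $\Gamma\cdot\th1=0$. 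Since $\Gamma$ covers $\th1$, this shows that $\th1$ is uniruled.
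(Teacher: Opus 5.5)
\textbf{Gap: the intersection number is never computed.} You have the right curve. Your pencil $\langle C,C'\rangle$ is exactly the paper's pencil of hyperplane sections through $\Pi=\langle p_1,\ldots,p_{g-1}\rangle$, so the uniruledness part goes through. The second assertion is not proved, though. You leave $\Gamma\cdot\psi$ undecided between $1$ and $2$, and you never carry out the correction you flag in (ii)--(iii). You then close with ``the calculation should be rebalanced until it gives $0$'', which is not an argument.

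The missing count is as follows. For each $p_i$ exactly one member of the pencil is singular at $p_i$, namely the hyperplane spanned by $\Pi$ and $\mathbf{T}_{p_i}S$. These $g-1$ fibres lie in $B_0$. Since $\pi^*\delta_{\mathrm{irr}}=\alpha_0+2\beta_0$, they account for $2(g-1)$ of the $6g+18$. Hence $\Gamma\cdot\beta_0=g-1$ and $\Gamma\cdot\alpha_0=4g+20$; these, like $\Gamma\cdot\lambda=g+1$, can be read off from the unpointed pencil in $\overline{\mathcal{S}}_g^-$. The paper then asserts $\Gamma\cdot\psi=-(E_p')^2=2$, taking the proper transform $E_p'$ of the first exceptional curve as the section. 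This gives $\frac{g+1}{4}+1-\frac{4g+20}{16}=0$.

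\textbf{Your local model is right, and it gives $-\tfrac12$, not $0$.} Suppose that near $p$ the members are $y=tx^2$. After two blow-ups they become $y_2=t$, where $y_2$ is the coordinate on the last exceptional curve $E_2$. So $E_2$, with $E_2^2=-1$, is the section traced by $p$. The $(-2)$-curve $E_1$ is disjoint from the general member and is a component of the member singular at $p$. Hence $\Gamma\cdot\psi=1$, and Theorem \ref{thm:univ_theta} gives $\Gamma\cdot\overline{\Theta}_{g,1}^-=\frac{g+1}{4}+\frac12-\frac{4g+20}{16}=-\frac12$.

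This is confirmed without the class formula. The moduli map $\Phi$ from the blown-up surface to $\overline{\mathcal{S}}_{g,1}^-$ contracts $E_1$, because all its points give the same stable pointed curve, with the marked point on a rational bridge. Near $E_1$ the divisor $\Phi^*\overline{\Theta}_{g,1}^-$ equals $E_2+mE_1$. The other component of that fibre, the normalization of the nodal member, is not in $\Phi^{-1}(\overline{\Theta}_{g,1}^-)$, since its general point is not in the support of the induced theta-characteristic. Therefore $0=\Phi^*\overline{\Theta}_{g,1}^-\cdot E_1=1-2m$, so $m=\frac12$ and $\Gamma\cdot\overline{\Theta}_{g,1}^-=E_2^2+m=-\frac12$.

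So your construction proves uniruledness, in fact by a covering curve of negative degree. No completion of it can yield $\Gamma\cdot\overline{\Theta}_{g,1}^-=0$. The discrepancy lies in the paper's choice of $E_p'$ as the section, not in your model.
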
 

\begin{proof}
We fix a general point $[C, p, \vartheta]\in \cS_{g,1}^-$ and we may assume $h^0(C,\vartheta)=1$ and that $\mbox{supp}(\vartheta)$ consists of $g-1$ distinct points $p=p_1$, $p_2, \ldots, p_{g-1}$. When $g\leq 11$ and $g\neq 10$,  there exists a smooth $K3$ surface $S\supseteq C$. We may even assume $\mbox{Pic}(S)\cong \mathbb Z\cdot C$.  Note that in the embedding $S\stackrel{|C|}\longrightarrow \PP^g$, the points $p_1, \ldots, p_{g-1}$ span a codimension two subspace $\Pi=\langle p_1, \ldots, p_{g-1}\rangle \subseteq \PP^g$ such that $\Pi\cdot S=2(p_1+\cdots+p_{g-1})$.  Following \cite[Theorem 3.10]{FV2}, we consider the pencil of hyperplanes $\{H_t\}_{t\in\PP^1}$ in $\PP^g$ containing $\Pi$. This induces a rational curve in moduli 
\begin{equation}\label{eq:pencil_odd}
\Gamma:=\Bigl\{\bigl[C_t:=H_t\cap S, \ p_1, \ \ \vartheta_t=\OO_{C_t}(p_1+\cdots+p_{g-1})\bigr]: t\in \PP^1\Bigr\}\subseteq \th1.
\end{equation}
It follows that $\Gamma\cdot \lambda=g+1$, $\Gamma\cdot \beta_0=g-1$ and that $\Gamma \cdot \alpha_0=4g+20$, see \cite[Corollary 3.8]{FV2}. The points in the intersection $\Gamma\cdot \beta_0$ correspond to those hyperplanes $H_t$ such that the intersection $H_t\cap S$ is nodal at one of the points $p_i$. All the elements in the pencil $\{H_t\cap S\}_{t\in \PP^1}$ are irreducible and have at most one singular point.

\vskip 3pt

Observe that $\Pi\cap \mathbf{T}_{p}(S)$ is a $1$-dimensional space and this tangent direction may be regarded as a point $v$ on the exceptional divisor $E_p$ of the blow-up $\widetilde{S}=\mathrm{Bl}_{\{p_1,\ldots, p_{g-1}\}}(S)$. Set $\epsilon\colon S':=\mbox{Bl}_{v}(S')\rightarrow S$ and denote by $E_p'\subseteq S'$ the proper transform of $E_p$. The family of pointed spin curves in (\ref{eq:pencil_odd}) is then induced by the fibration $f'\colon S'\rightarrow \PP^1$, where $$f'^*\bigl(\OO_{\PP^1}(1)\bigr)\cong \OO_{S'}(\epsilon^*(C)-E_p'-E_{p_2}-\cdots-E_{p_{g-1}}).$$  We have $\Gamma\cdot \psi=-\bigl(E_p'\bigr)^2=2$. Using Theorem \ref{thm:univ_theta}, we obtain 
$\Gamma\cdot \th1=\frac{g+1}{4}+1-\frac{4g+20}{16}=0$.
\end{proof}

\subsection{The positivity of $K_{\SS_{g,n}}$} In order to prove that $K_{\SS_{g,n}}^{\pm}$ is big, we introduce some known effective divisors on moduli spaces related to $\SS_{g, n}^{\pm}$. We first recall the class \cite{F1} of the compactification $\overline{\Theta}_{\mathrm{null}}$ of the theta-null divisor $\Theta_{\mathrm{null}}:=\Bigl\{[C, \vartheta]\in \cS_g^+: H^0(C, \vartheta)\neq 0\Bigr\}$:
\begin{equation}\label{eq:theta_null}
[\overline{\Theta}_{\mathrm{null}}]=\frac{1}{4}\lambda-\frac{1}{16}\alpha_0-\frac{1}{2}\sum_{i=1}^{\lfloor \frac{g}{2}\rfloor} \beta_i\in CH^1(\SS_g^+).
\end{equation}
On $\SS_g^-$, we have the class of the closure of the locus $Z_g$ of odd spin curves with a non-reduced support \cite[Theorem 0.4]{FV2}:
\begin{equation}\label{eq:nonred_theta}
[\overline{Z}_g]=(g+8)\lambda-\frac{g+2}{4}\alpha_0-2\beta_0-\sum_{i=1}^{g-1} 2(g-i)\alpha_i\in CH^1(\SS_g^-).
\end{equation}
Observe that $Z_g$ is the branch divisor of the generically finite forgetful map $\Theta_{g,1}^-\rightarrow \cS_g^-$.  On $\mm_{g,n}$, we have the class of Logan's divisor \cite[Theorems 5.3-5.7]{Log} defined as the closure in $\mm_{g,g}$ of the following locus  $D_g:=\Bigl\{[C,p_1, \ldots, p_g]\in \cM_{g.g}: h^0\bigl(C, \OO_C(p_1+\cdots+p_g)\bigr)\geq 2\Bigr\}$:
$$
[\overline{D}_g]=-\lambda+\sum_{i=1}^g\psi_i-\sum_{i\geq 0, S}{\bigl||S|-i\bigr|+1\choose 2} \delta_{i:S}\in CH^1(\mm_{g,g}).
$$

We are now in a position to prove Theorem \ref{thm:kodaira_pointed}:

\vskip 5pt

\noindent \emph{Proof of Theorem \ref{thm:kodaira_pointed} for $\SS_{g,n}^+$.}
We first treat the case $n\geq g$. We shall use the fact that the class $\sum_{i=1}^n \psi_i$ is big on $\SS_{g,n}^+$. For a subset $S\subseteq [n]$ with $|S|=g$, let $\pi_S\colon \mm_{g,n}\rightarrow \mm_{g,g}$ be the morphism forgetting all the marked points in $S^c$ and we consider the symmetrized version of the pull-back of Logan's effective divisor
$D_{g,n}:=\frac{1}{{n\choose g}} \pi_S^*(D_g)$ on $\mm_{g,n}$. Then using standard formulas for the pull-backs of boundary classes under forgetful maps \cite{AC}, we compute
\begin{equation}\label{eq:symm_logan}
\bigl[\overline{D}_{g,n}\bigr]=-\lambda+\frac{g}{n}\sum_{i=1}^n \psi_i-\frac{g(g-3+2n)}{n(n-1)}\sum_{|S|=2}\delta_{0:S}-\cdots \in CH^1(\mm_{g,n}).
\end{equation}
We choose an effective divisor $D$ on $\mm_g$ with $[D]=s\lambda-\delta_{\mathrm{irr}}-\sum_{i=1}^{\lfloor \frac{g}{2}\rfloor} b_i\cdot \delta_i\in CH^1(\mm_g)$, with $b_i\geq 1$, and where $s=6+\frac{12}{g+1}$  when $g+1$ is composite (in which case $D$ is a multiple of a Brill-Noether divisor \cite[Theorem 1]{EH}), respectively $s=\frac{17}{2}$ when $g=4$, respectively $s=\frac{47}{6}$ when $g=6$ (in both these cases $D$ being a multiple of the corresponding Petri divisor, see \cite[Theorem 2]{EH}) and finally $s=7$ when $g=10$, in which case $D$ is the $K3$ divisor considered in \cite{FP}. Denoting by $\sigma\colon \SS_{g,n}\rightarrow \mm_g$ respectively by $\mu\colon \SS_{g,n}\rightarrow \SS_g$ the natural forgetful maps and recalling that $\pi\colon \SS_{g,n}^+\rightarrow \mm_{g,n}$ is the covering map, we form the following effective combination on $\SS_{g,n}^+$
\begin{align*}
8\bigl[\mu^*(\overline{\Theta}_{\mathrm{null}})\bigr]+\frac{2n(n-1)}{g(g-3+2n)}\bigl[\pi^*(\overline{D}_{g,n})\bigr]+\frac{3}{2}\bigl[\sigma^*(D)\bigr]=\Bigl(\frac{3s+4}{2}-
\frac{2n(n-1)}{g(g-3+2n)}\Bigr)\lambda+\\
\frac{2(n-1)}{g-3+2n}\sum_{i=1}^n \psi_i-2\alpha_0-3\beta_0-2\alpha_{0:2}-\sum_{i,S} \bigl(\overline{\alpha}_{i:S}\cdot \alpha_{i:S}+\overline{\beta}_{i:S}\cdot \beta_{i:S}\bigr) \in CH^1(\SS_{g,n}^+),
\end{align*}
where $\overline{\alpha}_{i:S}, \overline{\beta}_{i:S}\geq 2$ in all cases except when $i=0$ and $|S|=2$, in which case the corresponding coefficients are equal to zero.  We compare this formula to the expression (\ref{eq:can_class}). We observe that the coefficient of $\sum_{i=1}^n \psi_i$ is smaller than one, whereas the range of $(g,n)$ in the statement of Theorem \ref{thm:kodaira_pointed} is choses such that the $\lambda$-coefficient is less than $13$. It follows that $K_{\SS_{g,n}^+}$ is big. Coupled with Proposition \ref{prop:extension_canform}, this shows that $\SS_{g,n}^+$ is of general type in this range.

\vskip 4pt

For $g=8$, it is easy to see that $K_{\SS_{8,1}^+}$ can be expressed as a combination with positive coefficients of $[\sigma^*(D)]$, $\bigl[\mu^*(\overline{\Theta}_{\mathrm{null}})\bigr]$, $\psi$, the pull-back of the Weierstrass divisor under the forgetful map $\SS_{8,1}^+\rightarrow \mm_{8,1}$ and certain boundary divisors. It follows that $K_{\SS_{8,1}^+}$ is big.

\vskip 4pt

Finally, on $\SS_{7,3}^+$, we consider the closure of the divisor $\mathfrak{D}_{2:2:3}$ consisting of those pointed spin curves $\bigl[C, p_1, p_2, p_3, \vartheta\bigr]\in \cS_{7,3}^+$, such that there exists a permutation $\tau\in \mathfrak{S}_3$ with $$h^0\bigl(C, \OO_C(2p_{\tau(1)}+2p_{\tau(2)}+3p_{\tau(3)})\bigr)\geq 2.$$ 
Using \cite[Theorems 5.4-5.7]{Log},  its class is equal up to a positive rational constant, to
$$[\overline{\mathfrak{D}}_{2:2:3}]=-3\lambda+12\bigl(\psi_1+\psi_2+\psi_3)-40\cdot \alpha_{0:2}-0\cdot \alpha_0-0\cdot \beta_0-\cdots \in CH^1(\SS_{7,3}^+).$$ By direct calculation, we observe that 
$K_{\SS_{7,3}^+}-8\bigl[\mu^*(\overline{\Theta}_{\mathrm{null}})\bigr]-\frac{1}{12}[\overline{\mathfrak{D}}_{2:2:3}]-\frac{3}{2}[\sigma^*(D)]$ is an effective combination of boundary classes, which completes the proof. \hfill $\Box$

\vskip 6pt

We now move on to prove Theorem \ref{thm:kodaira_pointed} for the moduli space of odd spin curves.

\vskip 5pt

\noindent \emph{Proof of Theorem \ref{thm:kodaira_pointed} for $\SS_{g,n}^-$.} We first treat the case when $g\geq 4$ and $n\geq g$. Since the proof bears similarities with the one for $\SS_{g,n}^+$, we skip a few details. We recall that we considered the Logan divisor $D_{g,n}$ on $\mm_{g,n}$ and the effective divisor $D$ on $\mm_g$ having minimal slope $s$. Denoting by $\pi_i\colon \SS_{g,n}^-\rightarrow \SS_{g,1}^-$ the morphism forgetting all marked points except the one labelled by the index $i$, we set $\overline{\Theta}_{g,n}:=\pi_1^*\bigl(\overline{\Theta}_{g,1}\bigr)+\cdots+\pi_n^*\bigl(\overline{\Theta}_{g,1}\bigr)$. From Theorem \ref{thm:univ_theta}, 

\begin{equation}\label{eq:theta_n}
\bigl[\overline{\Theta}_{g,n}^-\bigr]=\frac{n}{4}\lambda+\frac{1}{2}\sum_{i=1}^n \psi_i-\frac{n}{16}\alpha_0-\beta_{0:2}-\cdots \in CH^1(\SS_{g,n}^-).
\end{equation}

We form the following effective linear combination on $\SS_{g,n}^-$:
\begin{align*}
\frac{8}{n}\bigl[\overline{\Theta}_{g,n}\bigr]+\frac{2(n-4)(n-1)}{g(g-3+2n)}\bigl[\pi^*(\overline{D}_{g,n})\bigr]+\frac{3}{2}\bigl[\sigma^*(D)\bigr]=\Bigl(\frac{3s+4}{2}
-\frac{2(n-4)(n-1)}{g(g-3+2n)}\Bigr)\lambda\\
+\frac{2(n^2+2g-n-2)}{n(g-3+2n)}\sum_{i=1}^n \psi_i-2\alpha_0-3\beta_0-2\alpha_{0:2}-\sum_{i,S}\overline{\alpha}_{i:S}\cdot \alpha_{i:S},
\end{align*}
where $\overline{\alpha}_{i:S}\geq 2$, except when $i=0$ and $|S|=2$, when the corresponding coefficient equals zero. Again, observe that the coefficient of $\sum_{i=1}^n \psi_i$ is smaller than one. Then for $4\leq g\leq 7$, the $\lambda$-coefficient of this effective divisor is smaller than $\frac{13}{2}$ precisely in the range in the statement of Theorem \ref{thm:kodaira_pointed}.   

\vskip 4pt

Assume now $n\leq g$. We use the effective divisors provided by (\ref{eq:nonred_theta}), (\ref{eq:symm_logan}) and (\ref{eq:theta_n}). We can easily show that for every $g\in \{8, 9, 10, 11\}$ and  $n\geq h(g)$, we have that 
$$K_{\SS_{g,n}^-}\in \mathbb Q_{\geq 0}\Bigl\langle \bigl[\overline{\Theta}_{g,n}^-\bigr], \  \bigr[\mu^*(\overline{Z}_g)\bigr], \ \bigl[\sigma^*(D)\bigr], \  \bigl[\pi^*(\overline{D}_{g,n})\bigr], \  \frac{1}{n}\sum_{i=1}^n \psi_i\Bigr\rangle,$$
where the coefficient of $\psi_1+\cdots+\psi_n$ in this expression is positive. It follows that $K_{\SS_{g,n}^-}$ is big. For the boundary cases $\SS_{8,4}^-$ and $\SS_{9,3}^-$, we have the following expressions for the respective canonical divisors:
$K_{\SS_{8,4}^-} -2\bigl[\overline{\Theta}_{8:4}^-\bigr]-\frac{3}{2}[\sigma^*(D)]\in \mathbb Q_{\geq 0}\bigl\langle \alpha_{i:S}\bigr\rangle$, respectively
$$
K_{\SS_{9,3}^-}-2\bigl[\overline{\Theta}_{9:3}^-\bigr]-\frac{10}{7}[\sigma^*(D)]-\frac{1}{14}\bigl[\mu^*(\overline{Z}_{9})\bigr] \in \mathbb Q_{\geq 0}\Bigl\langle \alpha_{i:S}:i\geq 0, S\subseteq [n]\Bigr\rangle.$$ This finishes the proof.
\hfill $\Box$

\vskip 4pt

We now discuss the case $g=11$.

\vskip 4pt

\noindent {\emph{Proof of Theorem \ref{thm:gen11}.} The same consideration as in the proof of Theorem \ref{thm:kodaira_pointed} show that one can represent the canonical class of $\SS_{11,1}^-$ as follows
\begin{equation}\label{eq:can11}
K_{\SS_{11,1}^-}=2\bigl[\overline{\Theta}_{11:1}^-\bigr]+\frac{4}{3}\bigl[\sigma^*(D)\bigr]+\frac{1}{6}\bigl[\mu^*(\overline{Z}_{11})\bigr]+\sum_{i=1}^{10}
\bigl(a_{i}\cdot \alpha_{i}+b_{i}\cdot \alpha_{i:\emptyset}\bigr),
\end{equation}
where $a_{i},b_i\geq 0$. In fact, using \cite[Proposition 10.5]{FP}, there is a $19$-dimensional family of effective divisors on $\mm_{11}$ having slope $7$, therefore we obtain that $\kappa(\SS_{11,1}^-\bigr)\geq 19$. 

It remains to show that $\SS_{11,1}^-$ is not of general type. To that end, we consider the family of rational curves constructed in the proof of Theorem \ref{thm:uniruledK3}. A general member $\Gamma\subseteq \overline{\Theta}_{11,1}^-$ of this family passes through a general point of $\overline{\Theta}_{11,1}^-$ and has the intersection numbers $\Gamma\cdot \lambda=12$, $\Gamma\cdot \psi=2$, $\Gamma\cdot \alpha_0=64$ and $\Gamma\cdot \beta_0=10$, while $\Gamma\cdot \alpha_{i:1}=\Gamma\cdot \alpha_{i:\emptyset}=0$, for $i=1, \ldots, 10$.

 We observe via Theorem \ref{thm:uniruledK3} and (\ref{eq:nonred_theta}) that $\Gamma$ has intersection number zero with \emph{every} divisor appearing on the right hand side of (\ref{eq:can11}), in particular, also $\Gamma\cdot K_{\SS_{11,1}^-}=0$. Since $\Gamma$ fills-up a divisor in $\SS_{11,1}^-$ this rules out the possibility of expressing $K_{\SS_{11,1}^-}$ as an effective combination of an ample and an effective class. In particular, $K_{\SS_{11,1}^-}$ is not big, that is, $\SS_{11,1}^-$ is not of general type. The case of $\SS_{11:[2]}^-$ is similar and we skip the details.
\hfill $\Box$

\subsection{Holomorphic forms on $\SS_{1,n}^+$} We now discuss the geometry of $\SS_{1,n}^+$ and establish Theorem \ref{thm:genus1}. Note that one has an identification $\SS_{1,1}^+\cong X_1(2)\cong \PP^1$.

\vskip 4pt
  
\noindent \emph{Proof of Theorem \ref{thm:genus1}.} One can use (\ref{eq:can_class}) in genus one, together with the relations $12\lambda=\delta_{\mathrm{irr}}$ and $12\psi_i=\delta_{\mathrm{irr}}+12\sum_{i\in S}\delta_{0:S}$, to obtain the following formula \cite[Proposition 3]{BF}:
$$K_{\mm_{1,n}}=(n-11)\lambda+\sum_{a=2}^{n-1}\sum_{S\subseteq [n], |S|=a}(a-2)\delta_{0:S}+(n-3)\delta_{0:n}\in CH^1(\mm_{1,n}).$$
The finite cover $\pi\colon \SS_{1,n}^+\rightarrow \mm_{1,n}$ is branched along the divisor $B_0$. Since we have the isomorphism $\SS_{1,1}^+\cong \PP^1$ at the level of coarse moduli spaces and both divisors $A_0$ and $B_0$ corresponds to points on $\SS_{1,1}^+$, it follows that $\alpha_0=\beta_0\in CH^1(\SS_{1,n}^+)$.  Since $\pi^*(\delta_{\mathrm{irr}})=\alpha_0+2\beta_0$ and $\pi^*(\Delta_{0:S})=A_{0:S}$ for every subset $S\subseteq [n]$ with $|S|\geq 2$, by the Hurwitz formula we can write 
\begin{equation}\label{eq:can_g1}
K_{\SS_{1,n}^+}=(n-11)\lambda+\sum_{a=2}^{n-1}(a-2)\alpha_{0:a}+(n-3)\delta_{0:n}+\beta_0
=\frac{n-7}{4}\alpha_0+\sum_{a=2}^{n}(a-2)\alpha_{0:a}-\alpha_{0:n}.
\end{equation}
In particular, for $n\geq 8$, via Remark \ref{rmk:extgen1}  using the forgetful morphism $\SS_{1,n}^+\rightarrow \SS_{1,1}^+$, we obtain that $\kappa(\SS_{1,n}^+)=\kappa(\SS_{1,n}^+, K_{\SS_{1,n}^+}) \geq \kappa(\SS_{1,1}^+, \alpha_0)=\kappa(\PP^1, \OO_{\PP^1}(1))=1$. On the other hand, as  explained in \cite[Proposition 4]{BF}, since the general fibre of the morphism $\SS_{1,n}^+\rightarrow \SS_{1,1}^+$ has Kodaira dimension zero, by the \emph{easy addition} theorem we write $\kappa(\SS_{1,n}^+)\leq \mbox{dim}(\SS_{1,1}^+)=1$, thus $\kappa(\SS_{1,n}^+)=1$, for $n\geq 8$.

\vskip 4pt

For $n=7$, we obtain from (\ref{eq:can_g1}) that $H^0(\SS_{1,7}^+, K_{\SS_{1,7}}^+)\neq 0$. Finally, $K_{\SS_{1,7}^+}$ admits an effective representative consisting of boundary divisors $A_{0:S}$, where $S\subseteq [7]$. For each such divisor, by varying the $j$-invariant of the elliptic curve corresponding to a general point, one constructs a rational curve $\Gamma_S\subseteq A_{0:S}$ passing through a general point and satisfying $\Gamma_S\cdot \alpha_{0:S}=-1$ and  $\Gamma_S\cdot \alpha_{0:S'}=0$, for every $S'\subseteq [7]$ with $S'\neq S$. It follows that $\kappa(\SS_{1,7}^+)=0$.
\hfill $\Box$
\section{Correspondences between moduli spaces of pointed spin curves.} We now explain a structure result for the spin moduli space $\SS_7^-$ which makes no reference to Mukai's structure theorem \cite{M3} for curves of genus $7$ in terms of the spinorial variety $OG(5,10)\subseteq \PP^{15}$, that is, of the birational isomorphism
$$\mm_7\stackrel{\cong}\dashrightarrow {\bf{G}}(9,15)\dblq \mbox{Aut } OG(5,10).$$  This construction establishes a correspondence between $\SS_7^-$ and $\SS_{3,6}^+$.

%We believe this construction to be of independent interest and to prove to be useful for other applications, e.g. when computing the Chow ring of $\SS_7^-$,  in the spirit of \cite{CL}. 

\vskip 3pt

We denote by $\hs_7^-$ the \emph{Hurwitz spin} stack parametrizing triples $[C, \vartheta, A]$, consisting of an odd spin curve $[C,\vartheta]\in \cS_7^-$ and a pencil $A\in W^1_5(C)$. The general fibre of the forgetful map $\hs_7^-\rightarrow \SS_7^-$ is a smooth curve. By Serre duality $L:=\omega_C\otimes A^{\vee}\in W^2_7(C)$. We consider the induced plane  model 
$$\phi\colon C\stackrel{|L|}\longrightarrow \Gamma\subseteq \PP^2=\PP H^0(C,L)^{\vee}.$$
For a general choice of $[C, \vartheta, A]$, it is straightforward that $\Gamma$ is a nodal septic curve with $8$ nodes, which we denote by $z_1, \ldots, z_8\in \PP^2$. Setting $\vartheta:=\OO_C(p_1+\cdots+p_6)$, it then follows that there exists a smooth plane quadric curve $D\in \bigl|\OO_{\PP^2}(4)\bigr|$ such that 
\begin{equation}\label{eq:int7}
D\cdot \Gamma=2(z_1+\cdots+z_8+p_1+\cdots+p_6).
\end{equation}
Then $\eta:=\OO_D(p_1+\cdots+p_6+z_1+\cdots+z_8)\otimes \omega_D^{-3}\in \mbox{Pic}^2(D)$ is  an even theta-characteristic. 

\vskip 3pt

To obtain a parametrization of $\hs_7^-$ (and ultimately of $\SS_7^-$), we consider the following locally trivial $\PP^5$-bundle 
over $\cS_{3,6}^+$. Let $\mathfrak{u}\colon \C\rightarrow \cS_{3,6}^+$ be the universal curve and $\L\in \mbox{Pic}(\C)$ be the universal spin bundle. We denote by $\sigma_i\colon \cS_{3,6}^+\rightarrow \C$ the section corresponding to the $i$-th marked point and set $\Sigma_i:=\mbox{Im}(\sigma_i)$, for $i=1, \ldots, 6$. Let
$$\P_{3,6}:=\PP\bigl(\mathfrak{u}_*(\L\otimes \omega_{\mathfrak{u}}^3(-\Sigma_1-\cdots-\Sigma_6)\bigr)\longrightarrow \cS_{3,6}^+.$$ 
A point of $\P_{3,6}$ corresponds to an element $\bigl(D, \eta, \overline{p}, z_1+\cdots+z_8\bigr)$, where $[D, \eta]\in \cS_{3,6}^+$ is an even spin curve, $\overline{p}=(p_1,\ldots, p_6)$ and $z_1+\cdots+z_8$ is an effective divisor on $D$ such that 
$$p_1+\cdots+p_6+z_1+\cdots+z_8\in \bigl|\omega_D^3\otimes \eta\bigr|.$$  

%We then consider the variety $\mathcal{Y} \rightarrow \P_{3,6}$ parametrizing  curves $\Gamma\in \bigl|\OO_{\PP^2}(7)(-2z_1-\cdots-2z_8)\bigr|$ such that the relation (\ref{eq:int7}) holds. Note that birationally, $\mathcal{Y}$ is a locally trivial $\PP^2$-bundle over $\P_{3,6}$.

Note that $2(p_1+\cdots+p_6)+2(z_1+\cdots+z_8)\in \bigl|\omega_D^7\bigr|=\bigl|\OO_D(7)\bigr|$. From the exact sequence 
$$0\longrightarrow H^0(\PP^2, \OO_{\PP^2}(3))\stackrel{\cdot D}\longrightarrow H^0(\PP^2, \OO_{\PP^2}(7))\longrightarrow H^0(D, \OO_D(7))\longrightarrow 0,$$
we conclude there exists a plane curve $\Gamma\in \bigl|\OO_{\PP^2}(7)\bigr|$, with $D\cdot \Gamma=2(p_1+\cdots+p_6+z_1+\cdots+z_8)$.
We set $\I_{2\overline{p}+2\overline{z}}:=\I_{p_1}^2\cap \ldots \cap \I_{p_6}^2\cap \I_{z_1}^2\cap \ldots \cap \I_{z_8}^2\subseteq \OO_{\PP^2}$. From the exact sequence 
\begin{equation}\label{eq:10}
0\longrightarrow H^0(\PP^2, \OO_{\PP^2}(3))\longrightarrow H^0\bigl(\PP^2, \I_{2\overline{p}+2\overline{z}}(7)\bigr)\longrightarrow H^0(D,\OO_D)\longrightarrow 0,
\end{equation}
it follows one has a $10$-dimensional linear system of plane curves of degree $7$ cutting out the divisor $2\overline{p}+2\overline{z}$ on $D$.
\begin{definition}
We denote by $\mathcal{Y} \rightarrow \P_{3,6}$ the parameter space of objects $\bigl(D, \eta, \overline{p}, z_1+\cdots+z_8, \Gamma\bigr)$, where $\Gamma \in \bigl|\I_{2\overline{p}+2\overline{z}}(7)\bigr|$ is a curve such that $\mbox{Sing}(\Gamma)=\{z_1, \ldots, z_8\}$.
\end{definition}
We denote by $\mu\colon \mathcal{Y} \dashrightarrow \cS_{3,6}^+$ the forgetful morphism $\bigl(D, \eta, \overline{p}, z_1+\cdots+z_8, \Gamma\bigr) \mapsto [D, \eta]$.

\begin{proposition}\label{prop:Y-bir}
The morphism $\mathcal{Y}\dashrightarrow \P_{3,6}$ is birationally a $\PP^2$-bundle. 
\end{proposition}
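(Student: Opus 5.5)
The plan is to show that the fibre of $\mathcal{Y}\dashrightarrow \P_{3,6}$ over a general point $\bigl(D,\eta,\overline{p},z_1+\cdots+z_8\bigr)$ is a nonempty open subset of a $2$-dimensional linear subsystem of $\bigl|\I_{2\overline{p}+2\overline{z}}(7)\bigr|$, and that this subsystem varies as the projectivization of a vector bundle over an open subset of $\P_{3,6}$. Throughout, $D$ is embedded in $\PP^2$ as a smooth plane quartic by its canonical series, since $\omega_D=\OO_D(1)$.

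\textbf{Step 1 (the linear system).} By (\ref{eq:10}), the curves of degree $7$ singular at all $p_i$ and $z_j$ form a vector space of dimension $h^0(\OO_{\PP^2}(3))+h^0(\OO_D)=11$. Every such curve restricts to $D$ as a divisor $\ge 2\overline{p}+2\overline{z}$, which has degree $28=\deg \OO_D(7)$. So these curves fill a $10$-dimensional projective space $\PP^{10}$. Its subspace $D+|\OO_{\PP^2}(3)|$ is a hyperplane whose members all contain $D$.

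\textbf{Step 2 (the extra conditions).} We do not need every $p_i$ to be a singular point of $\Gamma$. What is required is $\mathrm{Sing}(\Gamma)=\{z_1,\ldots,z_8\}$ together with $D\cdot\Gamma=2(\overline{p}+\overline{z})$, that is, $\Gamma$ is tangent to $D$ at the $p_i$. So the correct system is
$$V:=\Bigl|\OO_{\PP^2}(7)\otimes \I_{z_1}^2\cdots \I_{z_8}^2 \ \text{ with }\ \Gamma|_D=2(\overline{p}+\overline{z})\Bigr|.$$
We have $\dim |\OO_{\PP^2}(7)(-2\overline{z})| = 35-24=11$. The fixed element $\Gamma_0$ of the pencil constructed above (tangency forced, so the restriction is determined up to scalar) together with $D\cdot|\OO_{\PP^2}(3)|$ should span the relevant family: curves $\Gamma_0+ D\cdot c$ with $c$ a cubic through the $8$ points $z_j$. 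Passing through $8$ general points imposes $8$ conditions on the $10$-dimensional space of cubics, leaving a $2$-dimensional space, and together with $\Gamma_0$ this gives a $3$-dimensional vector space, i.e.\ a $\PP^2$. I will verify this dimension count by restricting to $D$. The condition $\Gamma|_D=2(\overline{p}+\overline{z})$ fixes $\Gamma$ modulo $D\cdot H^0(\OO(3))$. For $\Gamma=\Gamma_0+Dc$ to be singular at $z_j$, given that $\Gamma_0$ is already singular there, one needs $D\cdot c$ singular at $z_j$, which amounts to $c(z_j)=0$ since $D$ is smooth at $z_j$. The required $8$ conditions on cubics are independent when the $z_j$ are general on $D$, which holds generically in $\P_{3,6}$ because $z_1+\cdots+z_8$ moves in a $\PP^5$ and no $8$ points on $D$ impose dependent conditions on $|\OO_{\PP^2}(3)|$ unless they lie on a special configuration.

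\textbf{Step 3 (genericity and globalization).} I still have to check that a general member of this $\PP^2$ has exactly the nodes $z_j$ and no other singularities. Away from the $z_j$ this follows from Bertini applied to the $2$-dimensional system of cubics, and the nodes remain ordinary for a general choice. Nonemptiness comes from the forward construction: a general $[C,\vartheta,A]\in\hs_7^-$ produces such a $\Gamma$. Finally, upper semicontinuity gives constant rank on an open set, so the spaces glue into a vector bundle over an open subset of $\P_{3,6}$, and $\mathcal{Y}$ is an open subset of its projectivization. The main obstacle is Step 2, namely verifying the independence of the $8$ conditions and the existence of a suitable $\Gamma_0$ singular at the $z_j$. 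I would first establish the latter via the forward construction and then argue by dimension ($\dim\mathcal{Y}=\dim\hs_7^-$).
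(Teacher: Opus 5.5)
Your overall strategy is the paper's. The fibre over a general point of $\P_{3,6}$ is the $\PP^{10}$ of (\ref{eq:10}) cut by the $8$ linear conditions ``singular at $z_j$''. Your splitting of $W:=H^0(\PP^2,\I_{2\overline{p}+2\overline{z}}(7))$ as $\langle \Gamma_0\rangle\oplus D\cdot H^0(\OO_{\PP^2}(3))$ makes explicit the independence that the paper only asserts. (In Step 1 you misread the notation: $\I_{2\overline{p}+2\overline{z}}$ is the ideal of the length-$28$ subscheme $2\overline{p}+2\overline{z}\subseteq D$, not of septics singular at all $14$ points. Step 2 does use the right space.)

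The step you single out as the main obstacle is handled circularly, however. The forward construction only produces $\Gamma_0$ over the image of $\hs_7^-$ in $\P_{3,6}$. Knowing that this image is dense, or that $\dim\mathcal{Y}=\dim\hs_7^-$, presupposes the fibre dimension you are trying to prove. No detour is needed. Let $G$ be the equation of $D$. For $F\in W$ the restriction $F|_D$ vanishes doubly at $z_j$, so $dF(z_j)=\mu_j(F)\,dG(z_j)$ for a linear form $\mu_j$ on $W$. Then $V(F)$ is singular at $z_j$ iff $\mu_j(F)=0$, and $\mu_j(Gc)=c(z_j)$. If $\overline{z}$ imposes independent conditions on cubics, then $(\mu_1,\ldots,\mu_8)$ already has rank $8$ on the hyperplane $D\cdot H^0(\OO_{\PP^2}(3))$. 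Hence $\ker(\mu)\subseteq W$ is $3$-dimensional and meets that hyperplane in dimension $2$, so any element of $\ker(\mu)$ off the hyperplane is a $\Gamma_0$. Everything therefore reduces to independence on cubics.

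That independence is not justified by ``$z_1+\cdots+z_8$ moves in a $\PP^5$''. The system $|\OO_D(2)|$ is also a $\PP^5$, and every $\overline{z}$ in it fails. Since $H^0(\OO_{\PP^2}(3))\to H^0(\OO_D(3))$ is an isomorphism, Riemann--Roch gives $h^0(\OO_D(3)(-\overline{z}))=2+h^0(\OO_D(\overline{z})(-2))$. So $\overline{z}$ imposes independent conditions on cubics iff $\OO_D(\overline{z})\not\cong\OO_D(2)$. On $\P_{3,6}$ one has $\OO_D(\overline{z})\cong\omega_D^{3}\otimes\eta(-\overline{p})$, so the bad locus is $\OO_D(\overline{p})\cong\omega_D\otimes\eta$, which a general $\overline{p}$ avoids. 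Off that locus, independence holds for \emph{every} $\overline{z}$ in the fibre $\PP^5$.

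Finally, in Step 3 Bertini only controls the complement of the base locus of the net. That base locus contains the $p_i$, and possibly the ninth base point of the pencil of cubics through $\overline{z}$, so these need a separate, easy check. With these repairs the argument is complete.
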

\begin{proof}
Having fixed a general element $\bigl(D, \eta, \overline{p}, z_1+\cdots+z_8\bigr)$, from (\ref{eq:10}) the linear system $\bigl|\I_{2\overline{p}+2\overline{z}}(7)\bigr|$ is $10$-dimensional. Requiring that $z_i\in \mbox{sing}(\Gamma)$ is a linear condition for $i=1,\ldots, 8$ and via a local calculation is straightforward to see that these $8$ conditions are linearly independent. It follows that $\mathcal{Y}\rightarrow \P_{3,6}$ is birational to a $\PP^2$-bundle.
\end{proof}

Summarizing this construction, one has the following diagram,
\begin{equation}\label{eq:diagram7}
\xymatrix@C=0.3em{
  & \mathcal{Y}=\Bigl\{(D,\eta, \overline{p}, z_1+\cdots+z_8, \Gamma): D\cdot \Gamma=2\bigl(\sum_{i=1}^6 p_i+\sum_{j=1}^8 z_j\bigr)\Bigr\}  
   \ar[dl]_{\mu} \ar[dr]^{\chi} & & \\
   \cS_{3,6}^+  &    & \hs_7^- \ar[rr] & &  & \SS_7^-       
                 }
\end{equation}
where $\chi$ is given by $\bigl(D,\overline{p},\eta, z_1+\cdots+z_8,\ \Gamma\bigr)\mapsto \bigl(C, \OO_C(\nu^{-1}(p_1)+\cdots+\nu^{-1}(p_6)), \omega_C(-1)\bigr)$, with $\nu\colon C\rightarrow \Gamma$ being the normalization map.

%We summarize this construction, providing a direct relation between $\SS_{3,6}^+$ and $\SS_7^-$.

\begin{theorem}\label{thm:spin7}
The Hurwitz spin stack  $\hs_7^-$ is birational to a double projective bundle over  $\cS_{3,6}^+$.
\end{theorem}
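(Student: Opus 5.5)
The plan is to show that the map $\chi\colon \mathcal{Y}\dashrightarrow \hs_7^-$ in diagram (\ref{eq:diagram7}) is birational. Combined with Proposition \ref{prop:Y-bir} and the fact that $\P_{3,6}\rightarrow \cS_{3,6}^+$ is a projective bundle, this gives the double projective bundle structure $\mathcal{Y}\rightarrow \P_{3,6}\rightarrow \cS_{3,6}^+$.

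\textbf{Step 1: $\chi$ is well defined.} Take a general point $(D,\eta,\overline{p},z_1+\cdots+z_8,\Gamma)\in\mathcal{Y}$. Then $\Gamma$ is an irreducible septic that is nodal exactly at $z_1,\ldots,z_8$, so its normalization $C$ has genus $15-8=7$. Adjunction gives $\omega_C=\nu^*\OO_{\PP^2}(4)(-\sum_j \nu^{-1}(z_j))$. Pulling back the relation $D\cdot\Gamma=2(\sum p_i+\sum z_j)$ and using that $D$ passes through each node, we get $\nu^*\OO(4)=\OO_C(2\sum\nu^{-1}(p_i)+\sum \nu^{-1}(z_j))$. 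Comparing, $\vartheta:=\OO_C(\sum \nu^{-1}(p_i))$ satisfies $\vartheta^{\otimes 2}\cong\omega_C$. The pencil is $A=\omega_C\otimes\nu^*\OO(-1)\in W^1_5(C)$.

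To see that $\vartheta$ is odd, I would check that $h^0(C,\vartheta)=1$ at one explicit point of $\mathcal{Y}$, or argue by degeneration. Since parity is locally constant, it then holds on the whole irreducible family. Smoothness of $C$ and nodality of $\Gamma$ for a general point follow from Proposition \ref{prop:Y-bir} together with an explicit example.

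\textbf{Step 2: a rational inverse $\hs_7^-\dashrightarrow\mathcal{Y}$.} Start from a general $[C,\vartheta,A]$ and set $L=\omega_C\otimes A^\vee$. As described before the theorem, $\Gamma=\phi_L(C)$ is an $8$-nodal septic. Write $\vartheta=\OO_C(p_1+\cdots+p_6)$, which is unique since $h^0(\vartheta)=1$.

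Adjoint quartics, namely those through the nodes, cut out $|\omega_C|$ on $C$. Hence $2\sum p_i\in|\omega_C|$ is cut out by a quartic $D$ through $z_1,\ldots,z_8$, and then $D\cdot\Gamma=2(\sum p_i+\sum z_j)$. To get exactly twice $z_j$, one needs $D$ to be unique and to pass through each node with multiplicity $1$, meeting both branches there transversally. This holds because $h^0(\omega_C)=7=15-8$, so the adjoint quartics restrict isomorphically onto $H^0(\omega_C)$.

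Generality should give that $D$ is smooth, so $D$ has genus $3$. The theta-characteristic $\eta=\OO_D(\sum p_i+\sum z_j)\otimes\omega_D^{-3}$ is even by the same parity argument, by continuity over the irreducible $\hs_7^-$. This defines a point of $\mathcal{Y}$. By construction, both composites of this map with $\chi$ are the identity: $\Gamma$ is recovered as $\phi_L(C)$, and conversely $C$ and $A$ are recovered from $\Gamma$.

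\textbf{Step 3: dimension check and main obstacle.} As a sanity check, $\dim\mathcal{Y}=12+5+2=19$. On the other side, $\dim\hs_7^-=18+1=19$, since $\dim\cS_7^-=18$ and the fibres over it are curves. The hardest part is the genericity claims: that a general $\Gamma$ produced in $\mathcal{Y}$ has only the prescribed eight nodes, and that the two parities come out as claimed. I would settle both by exhibiting a single good example, for instance by degenerating $D$ or using a specific Hurwitz cover, and then invoking openness and irreducibility of $\hs_7^-$.
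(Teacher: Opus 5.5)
Your proposal follows the paper's route. The paper likewise stacks the $\PP^2$-bundle $\mathcal{Y}\dashrightarrow\P_{3,6}$ of Proposition \ref{prop:Y-bir} on the $\PP^5$-bundle $\P_{3,6}\rightarrow\cS_{3,6}^+$, and identifies $\mathcal{Y}$ with $\hs_7^-$ via $\chi$, whose inverse is your construction through the $8$-nodal septic $\Gamma=\phi_L(C)$ and the unique adjoint quartic $D$ cutting out $2(p_1+\cdots+p_6)$ on $C$.

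The genericity and parity claims you postpone are only asserted in the paper too. Parity needs no explicit example, though: adjoint quartics give $h^0(C,\vartheta)=h^0\bigl(\PP^2,\I_{\{p_1,\ldots,p_6,z_1,\ldots,z_8\}}(4)\bigr)=h^0(D,\eta)+1$, so $\vartheta$ is odd exactly when $\eta$ is even, and then $h^0(C,\vartheta)=1$ because a smooth plane quartic has no effective even theta-characteristic.
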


\vskip 4pt
A variant of this construction  provides a birational correspondence between $\SS_8^-$ and $\SS_{4,7}^+$.


\begin{thebibliography}{aaaaaa}
\bibitem[AB]{AB} D. Agostini and I. Barros, {\em{Pencils on surfaces with normal crossings and the Kodaira dimension of
$\cM_{g,n}$}}, Forum Math. Sigma \textbf{9} (2021), Paper No. 31.
\bibitem[AC]{AC} E. Arbarello and M. Cornalba, {\em{Calculating cohomology groups of moduli spaces of
curves via algebraic geometry}}, Publications math. de l’I.H.\'{E}.S.,  \textbf{88} (1998),  97--127.
\bibitem[ACGH]{ACGH}
  E.~Arbarello, M.~Cornalba, P.\,A.~Griffiths and J.~Harris, {\emph{Geometry of algebraic curves, Vol.~I}}, Grundlehren math.\ Wiss.\ vol.~267, Springer-Verlag, New York, 1985.
\bibitem[Ba]{B} I. Barros, {\em{Uniruledness of strata of holomorphic differentials in small
genus}}, Advances in  Math. \textbf{333} (2018), 670--693.
\bibitem[BF]{BF} G. Bini and C. Fontanari, {\em{Moduli of curves and spin structures via algebraic geometry}}, Transactions American Math. Soc. \textbf{358} (2006), 3207--3217. 
\bibitem[Bu]{Bu} A. Bud, {\em{Maximal gonality on strata of differentials and uniruledness
of strata in low genus}}, Bulletin London Math. Soc. \textbf{53} (2021), 1627--1635.
\bibitem[BM]{BM} I. Barros and S. Mullane, {\em{Two moduli spaces of Calabi-Yau type}}, International Math. Res. Notices \textbf{20} (2021), 15833--15899.
\bibitem[CL]{CL} S. Canning and H. Larson, {\em{On the Chow and cohomology groups of moduli spaces of stable curves}}, arXiv:2208.02357,  Journal of the European Math. Soc. (2026).
\bibitem[CLP]{CLP} S. Canning, H. Larson and S. Payne, {\em{The eleventh cohomology group of $\mm_{g,n}$}}, Forum of Mathematics Sigma \textbf{11} (2023), 1--18.    
\bibitem[CMP]{CMP} L. Caporaso, M. Melo and M. Pacini, {\em{Tropicalizing the moduli space of spin curves}}, Selecta Math. \textbf{26} (2020), article 16.  
%\bibitem[CEFS]{CEFS} A. Chiodo, D. Eisenbud, G. Farkas and F.-O. Schreyer, {\em{Syzygies of torsion bundles and the geometry of the level $\ell$ modular varieties over $\overline{\mathcal{M}}_g$}}, Inventiones Math. \textbf{194} (2013), 73--118.
\bibitem[CCM]{CCM} D. Chen, M. Costantini, and M. M\"oller,  {\em{On the Kodaira dimension
of moduli spaces of Abelian differentials}}, Cambridge Journal of Math \textbf{12} (2024), 623–-752.
\bibitem[Cor]{Cor} M. Cornalba, {\em{Moduli of curves and theta-characteristics}},
in: Lectures on Riemann surfaces (Trieste, 1987), 560--589.
\bibitem[Dol]{Dol} I. Dolgachev, {\em{Classical algebraic geometry: a modern view}}, Cambridge University Press, 2012. 
\bibitem[DW]{DW} R. Donagi and E. Witten, {\em{Super Atiyah classes and obstructions to splitting of supermoduli space}}, Pure and Applied Math. Quarterly  \textbf{9} (2013), 739--788.
\bibitem[EH]{EH} D. Eisenbud and J. Harris, {\em{The Kodaira
dimension of the moduli space of curves of genus $\geq 23$}},
Inventiones Math. \textbf{90} (1987), 359--387.
\bibitem[FP]{FP} C. Faber and R. Pandharipande, {\em{Tautological and non-tautological cohomology of the moduli space of curves}}, Handbook of Mduli vol. 1, 293--330, Intl. Press 2013. 
\bibitem[F1]{F1} G. Farkas, {\em{Koszul divisors on moduli spaces of
curves}}, American Journal of Math. \textbf{131} (2009),
819--869.
\bibitem[F2]{F2} G. Farkas, {\em{The birational type of the moduli space of even spin curves}}, Advances in Math. \textbf{223} (2010), 433--443.
\bibitem[F3]{F3} G. Farkas, {\em{Brill-Noether geometry on moduli spaces of spin curves}}, in: Classification of Algebraic Varieties, EMS Series of Congress Reports (2011), 259--278.
\bibitem[FJP]{FJP} G. Farkas, D. Jensen and S. Payne, {\em{The Kodaira dimension of $\mm_{22}$ and $\mm_{23}$}}, Cambridge Journal of Math. \textbf{13} (2025), 431--607.
\bibitem[FP]{FP} G. Farkas and M. Popa, {\em{Effective divisors on
$\mm_g$, curves on $K3$ surfaces and the Slope Conjecture}}, Journal of Algebraic Geometry \textbf{14} (2005), 151--174.    
%\bibitem[FL]{FL} G. Farkas and K. Ludwig, {\em{The Kodaira dimension of the moduli
%space of Prym varieties}}, Journal of the European Mathematical Society \textbf{12} (2010), 755--795.
\bibitem[FV1]{FV1} G. Farkas and A. Verra, {\em{Moduli of theta-characteristics via Nikulin surfaces}}, Mathematische Annalen \textbf{354} (2012), 465--496.
\bibitem[FV2]{FV4} G. Farkas and A. Verra, {\em{The classification of universal Jacobians over the moduli space of curves}} 
Commentarii Mathematici Helvetici \textbf{88} (2013), 587--611.
\bibitem[FV3]{FV2} G. Farkas and A. Verra, {\em{The geometry of the moduli space of odd spin curves}}, Annals of Mathem. \textbf{180} (2014), 927--970.
\bibitem[FV4]{FV3} G. Farkas and A. Verra, {\em{The uniruledness of the Prym moduli space of genus $9$}}, Advances in Math. \textbf{448} (2024), Paper 109678.
\bibitem[GHS]{GHS} T. Graber, J. Harris and J. Starr, {\em{Families of rationally connected varieties}}, Journal Amer. Math. Soc.  \textbf{16} (2003), 57--67.    
\bibitem[HM]{HM} J. Harris and D. Mumford, {\em{On the Kodaira
dimension of $\mm_g$}}, Inventiones Math. \textbf{67} (1982), 23--88.  
\bibitem[KZ]{KZ} M. Kontsevich and A. Zorich, {\em{Connected components of the moduli spaces of Abelian differentials with prescribed singularities}}, Inventiones Math. \textbf{153} (2003), 631--678.
\bibitem[Kr]{Kr} S. Krug, {\em{Geometry of moduli spaces of spin and prym curves of small genus}}, PhD Thesis, Leibniz Universit\"at Hannover 2012, \ available at 
https://repo.uni-hannover.de/handle/123456789/7945.    
\bibitem[Log]{Log} A. Logan, {\em{The Kodaira dimension of moduli spaces of curves
with marked points}}, American Journal of Math. \textbf{125} (2003), 105--138.  
\bibitem[Lu]{Lu} K. Ludwig, {\em{On the geometry of the moduli space of spin
curves}},  Journal of Algebraic Geometry \textbf{19} (2010), 133--171. 
\bibitem[M1]{M1} S. Mukai, {\em{Curves and Grassmannians}}, in: Algebraic Geometry and Related Topics, eds. J.-H. Yang, Y. Namikawa, K. Ueno, 19--40, 1992   International Press.
\bibitem[M2]{M3} S. Mukai, {\em{Curves and symmetric spaces I}}, American Journal of Mathematics \textbf{117} (1995), 1627--1644.    
\bibitem[M2]{M2} S. Mukai, {\em{Curves and symmetric spaces II}}, Annals of Math. \textbf{172} (2010), 1539–-1558.   
\bibitem[Mu]{Mu} D. Mumford, {\em{Theta characteristics of an algebraic curve}}, Annales Scient. \'Ecole Norm. Sup. \textbf{4} (1971),
181--192.    



\end{thebibliography}
\end{document}